   \let\digamma\relax                   % in front of amsfonts
\pgfplotsset{compat=1.16}     % less compatibility
\def\paragraph{\vspace{6pt}\@startsection{paragraph}{4}%
  \z@\z@{-\fontdimen2\font}%
  {\normalfont\bfseries}}
\newcounter{todocounter}
\DeclareDocumentCommand\addreference{g}{\stepcounter{todocounter}\todo[color = blue!30]{\thetodocounter. Add reference\IfNoValueF{#1}{: #1}}\xspace}
\DeclareDocumentCommand\checkthis{g}{\stepcounter{todocounter}\todo[color = red!50]{\thetodocounter. Check this\IfNoValueF{#1}{: #1}}\xspace}
\DeclareDocumentCommand\fixthis{g}{\stepcounter{todocounter}\todo[color = orange!50]{\thetodocounter. Fix this\IfNoValueF{#1}{: #1}}\xspace}
\DeclareDocumentCommand\expand{g}{\stepcounter{todocounter}\todo[color = green!50]{\thetodocounter. Expand\IfNoValueF{#1}{: #1}}\xspace}
\tikzset{every loop/.style={min distance=1cm, looseness=30}}
\tikzset{vertex/.style={circle, draw, black, minimum size=6pt, inner sep=0pt}}
\tikzset{half-vertex/.style={semicircle, draw, fill, black, minimum size=3pt, inner sep=0pt, yshift=1.5pt}}
\tikzset{middlearrow/.style={decoration={markings, mark= at position 0.5 with {\arrow{#1}},}, postaction={decorate}}}
\declaretheoremstyle[
  spaceabove = 3pt,
  spacebelow = 3pt,
  bodyfont = \itshape,
]{first}
\declaretheorem[numberwithin=section, style=first]{theorem}
\declaretheorem[sibling=theorem, style=first]{conjecture}
\declaretheorem[sibling=theorem, style=first]{corollary}
\declaretheorem[sibling=theorem, style=first]{lemma}
\declaretheorem[sibling=theorem, style=first]{proposition}
\declaretheorem[numberwithin=section, style=first, title=Theorem]{alphatheorem}
\declaretheorem[sibling=alphatheorem, style=first, title=Corollary]{alphacorollary}
\declaretheorem[sibling=alphatheorem, style=first, title=Proposition]{alphaproposition}
\declaretheoremstyle[
  spaceabove = 0pt,
  spacebelow = 0pt,
]{second}
\theoremstyle{second}
\declaretheorem[style=second, sibling=theorem]{definition}
\declaretheorem[style=second, sibling=theorem]{example}
\declaretheorem[style=second, sibling=theorem]{remark}
\crefname{conjecture}{Conjecture}{Conjectures}
\crefname{construction}{Construction}{Constructions}
\crefname{alphatheorem}{Theorem}{Theorems}
\crefname{alphaconjecture}{Conjecture}{Conjectures}
\crefname{alphacorollary}{Corollary}{Corollaries}
\crefname{alphaproposition}{Proposition}{Propositions}
\crefname{figure}{Figure}{Figures}
\def\gitfootnote{\gdef\@thefnmark{}\@footnotetext}
\DeclareRobustCommand\widecheck[1]{{\mathpalette\@widecheck{#1}}}
\def\@widecheck#1#2{%
    \setbox\z@\hbox{\m@th$#1#2$}%
    \setbox\tw@\hbox{\m@th$#1%
       \widehat{%
          \vrule\@width\z@\@height\ht\z@
          \vrule\@height\z@\@width\wd\z@}$}%
    \dp\tw@-\ht\z@
    \@tempdima\ht\z@ \advance\@tempdima2\ht\tw@ \divide\@tempdima\thr@@
    \setbox\tw@\hbox{%
       \raise\@tempdima\hbox{\scalebox{1}[-1]{\lower\@tempdima\box
\tw@}}}%
    {\ooalign{\box\tw@ \cr \box\z@}}}
\newcommand{\acro}[3]{%
  \newacronym{#1}{#2}{#3}%
  \@namedef{#1}{{\acrshort{#1}}\xspace}%
  \@namedef{#2}{{\acrlong{#1}}\xspace}%
}
\newcommand   \graph {\gamma}          % a graph,  Kohno-style,  != \iter
\newcommand \sphere  {\ensuremath{S}}         % sphere not always = surface
\DeclareMathOperator\SL{SL}                   % special linear group,     SL(2)
\DeclareMathOperator\SO{SO}                   % special orthogonal group, SO(3,R)
\DeclareMathOperator \Gr     {Gr}               % Grassmannian (variety)
\DeclareMathOperator \OGr    {OGr}              % isotropic/orthogonal Gr
\DeclareMathOperator \LGr    {LGr}              % isotropic/Lagrangian Gr = SGr(*,2*)
\DeclareMathOperator \Pic    {Pic}              % Picard (group scheme)
\DeclareMathOperator \moduli {M}                % moduli (space)
\newcommand \even {\ensuremath{\moduli_C(2,\cO_C)}} % det = \cO, trivial
\newcommand \odd  {\ensuremath{\moduli_C(2,\cL)}}   % det = L, fixed of odd degree
\DeclareMathOperator \Fuk {Fuk}                 % -//- majuscule v.
\mathchardef\mhyphen="2D
\newcommand\dash{\nobreakdash-\hspace{0pt}}
\newcommand \manontoric [1] {\ensuremath{Y_{P_{#1},\cM_{#1}}}}   % Manon toric variety
\newcommand \gptoric    [3] {\ensuremath{X_{P_{#1,#2},#3_{#1}}}} % GP toric variety
\newcommand\pt{\ensuremath{\mathrm{pt}}}
\newcommand \dd    {\ensuremath{\mathrm{d}}}      % de Rham differential
\newcommand \interior  {\ensuremath{\mathrm{int}}}    % interior of a polytope
\newcommand \regular   {\ensuremath{\mathrm{reg}}}    % regular locus
\newcommand \singular   {\ensuremath{\mathrm{singular}}}    % regular locus
\newcommand \onecosk   {\ensuremath{\mathrm{1-cosk}}}    % 1-coskeleton
\DeclareMathOperator\CC{C}
\DeclareMathOperator\ev{ev}              % evaluation map
\DeclareMathOperator\fano{F}
\DeclareMathOperator\Fl{Fl}
\DeclareMathOperator\HH{H}
\DeclareMathOperator\RR{R}
\DeclareMathOperator\Spec{Spec}          % affine spectrum of a ring
\newcommand \bA {{\ensuremath{\mathbb{A}}}}
\newcommand \bC {{\ensuremath{\mathbb{C}}}}
\newcommand \bF {{\ensuremath{\mathbb{F}}}}
\newcommand \bP {{\ensuremath{\mathbb{P}}}}
\newcommand \bQ {{\ensuremath{\mathbb{Q}}}}
\newcommand \bR {{\ensuremath{\mathbb{R}}}}
\newcommand \bZ {{\ensuremath{\mathbb{Z}}}}
\newcommand \cE {\ensuremath{\mathcal{E}}}
\newcommand \cL {\ensuremath{\mathcal{L}}}
\newcommand \cM {\ensuremath{\mathcal{M}}}
\newcommand \cN {\ensuremath{\mathcal{N}}}
\newcommand \cO {\ensuremath{\mathcal{O}}}
\newcommand \cP {\ensuremath{\mathcal{P}}}
\newcommand \cV {\ensuremath{\mathcal{V}}}
\newcommand \rP {\ensuremath{\mathrm{P}}}
\title{Graph potentials and symplectic geometry of moduli spaces of vector bundles}
\author{Pieter Belmans}
\address{Department of Mathematics, Universit\'e de Luxembourg, 6, avenue de la Fonte, L-4364 Esch-sur-Alzette, Luxembourg}
\author{Sergey Galkin}
\address{PUC-Rio, Departamento de Matem\'atica, Rua Marqu\^es de S\~ao Vicente 225, G\'avea, Rio de Janeiro; HSE University, Faculty of Mathematics, Moscow}
\author{Swarnava Mukhopadhyay}
\address{School of Mathematics, Tata Institute of Fundamental Research, 1 Homi Bhabha Road, Navy Nagar, Colaba, Mumbai 400005}
\email{pieter.belmans@uni.lu,arxiv-gp-sympl@galkin.org.ru,swarnava@math.tifr.res.in}
\begin{document}

\begin{abstract}
%For any monotone Lagrangian torus $L$ on a Fano manifold $X$
%the weighted number of holomorphic Maslov index $2$ discs with boundary on $L$
%is bounded from below by an invariant $T_X$ (introduced earlier in the Galkin--Golyshev--Iritani conjecture).
%If this bound is attained the torus $L$ is said to be optimal.
%We give the first examples of Fano manifolds with multiple optimal tori.

We give the first examples of Fano manifolds with multiple optimal tori,
i.e.~we construct monotone Lagrangian tori $L$,
such that the weighted number of holomorphic Maslov index two discs with boundary on $L$
equals the upper bound given by the symplectic invariant $\limsup_n ([m_0(L)^n]_{x^0})^{1/n}$,
where $m_0(L)$ is the Floer potential.

To every trivalent graph $\gamma$ of genus $g$
we associate an optimal torus $L_\gamma$ on the celebrated symplectic Fano manifold $\cN_g$
(of complex dimension $3g-3$) % with $\mathrm{T}_{\cN_g} = 8g-8$),
given by the character variety of rank 2 on a genus $g$ surface with prescribed odd monodromy at a puncture,
We moreover show that all pairs $(\cN_g,L_\gamma)$ are pairwise non-isotopic.
In particular, we confirm a form of mirror symmetry
between the A-model of the pairs $(\cN_g,L_\gamma)$ % (and also spaces $\cN_g$ standalone)
and B-model of graph potentials,
a family of Laurent polynomials we introduced in earlier work.

A crucial input from outside of symplectic geometry
is an analysis of Manon's toric degenerations of algebro-geometric models $\odd$ for the spaces $\cN_g$,
as moduli spaces of stable rank $2$ bundles on an algebraic curve with a fixed determinant,
constructed using conformal field theory.
\end{abstract}

\maketitle

\renewcommand{\baselinestretch}{0.75}\normalsize
\tableofcontents
\renewcommand{\baselinestretch}{1.0}\normalsize

\section{Introduction}
\label{section:introduction}

We discuss various aspects of the mirror symmetry
for the moduli space~$\odd$ of rank~2 vector
bundles with fixed determinant $\mathcal{L}$ of odd degree
on an algebraic curve~$C$ of genus~$g\geq 2$.
These are smooth projective Fano varieties of dimension~$3g-3$
in the algebro-geometric context,
and monotone symplectic manifolds of dimension~$6g-6$
in the symplecto-geometric context
equipped with the Atiyah-Goldman-Narasimhan symplectic form,
where they are denoted~$\cN_g$.
They have been the subject of many works,
spanning 50+ years of research.

In \cite{gp-tqft} we have introduced \emph{graph potentials},
a class of Laurent polynomials with interesting symmetries,
associated to (colored) trivalent graphs.
The goal of this article is to relate these graph potentials to~$\odd$.
We do this by considering the monotone Lagrangian fiber~$L_\graph$
of Nishinou--Nohara--Ueda's integrable system \cite{MR2609019,MR2879356}
associated with Manon's toric degeneration for~$\odd$,
and identify the graph potential~$\smash{\widetilde{W}_{\graph,c}}$
with the Floer potential of~$L_\graph$.
This allows us to identify the classical period of the graph potential
with the quantum period of~$\odd$.
This is a manifestation of mirror symmetry for Fano varieties,
and for an excellent survey on this subject
one is referred to \cite{2112.15339}.

\paragraph{Graph potentials}
Let~$\gamma=(V,E,c)$ be a (colored) trivalent graph.
For every trivalent vertex~$v\in V$ with edges~$a,b,c\in E$
we define the \emph{vertex potential}
as the Laurent polynomial~$\smash{\frac{a}{bc}+\frac{b}{ac}+\frac{c}{ab}+\frac{1}{abc}}$
in the uncolored case,
and~$\smash{\frac{ab}{c}+\frac{ac}{b}+\frac{bc}{a}+abc}$ in the colored case,
with variables indexed by the edges.
For the \emph{graph potential} we take the sum over all vertices.
For a trivalent graph of genus~$g$
we obtain a Laurent polynomial in~$3g-3$ variables.
For more details on the construction one is referred to \cref{subsection:graph-potentials}.

Graph potentials have interesting symmetry properties,
where the choice of different trivalent graphs of fixed genus~$g$
gives rise to explicit mutations between the different potentials.
It is moreover possible to combine series
expansions of oscillating integrals of graph potentials
for all~$g$ simultaneously
into a topological quantum field theory.
These are the main results of \cite{gp-tqft}.
By establishing mirror symmetry for graph potentials and~$\odd$
it becomes possible to use the efficient computational methods
for classical periods of graph potentials in op.~cit.
and apply them to quantum periods of~$\odd$.

\paragraph{A monotone Lagrangian torus for $\odd$}
Our first goal is
to obtain an integrable system~$\Phi_{\graph,c}\colon \odd \to \Delta_{\graph,c}$
for every (colored) graph~$\graph$.
We do this in \cref{subsection:integrable-system}.
This crucially uses a degeneration of~$\odd$ into a
 toric variety constructed by Manon,
as recalled in \cref{subsection:manon}.
Then we use symplectic parallel transport for the natural
integrable system associated to a toric variety,
and obtain the following

\begin{alphaproposition}[\cref{proposition:monotone-torus}]\label{proposition:alpha1}
  For every trivalent graph~$\gamma$
  the Lagrangian torus
  \begin{equation}
    L_\graph := \Phi_{\graph,c}^{-1}(0) \subset \odd
  \end{equation}
  is monotone.
\end{alphaproposition}
By \cref{corollary:non-Hamiltonian-isotopic} these tori
are non-Hamiltonian isotopic
to each other, for different choices of~$\graph$.
%In \cref{remark:speculation} and the surrounding discussion
%we speculate on how the different Lagrangian tori $L_\graph$
%for different choices of trivalent graphs $\graph$ are related.

The next ingredient is motivated by a foundational
result of Nishinou--Nohara--Ueda,
\cite[Theorem~1]{MR2879356} and recalled in \cref{theorem:nnu}.
It gives a recipe to compute the Floer potential of a
smooth projective Fano variety
using a toric degeneration.
Our version gets rid of some of the
technical conditions in the original result which could be hard to check in practice.
\begin{alphatheorem}[\cref{theorem:gennnu}]
  \label{theorem:magic-lemma}
  Let~$X_1$ be a smooth projective Fano variety
  which admits a degeneration into a Gorenstein
  Fano toric variety~$X_0$. Assume that the degeneration
  preserves the second Betti number.
  Let~$L_t(u_0)$ be a monotone Lagrangian torus
  obtained by symplectic parallel transport. Then
  the Newton polytope of $m_0(L)$ equal the
  polar dual $P^{\circ}$ of the moment polytope
  of the toric variety $X_0$.

  In particular, if the polytope $P^{\circ}$ has non-zero
   lattice points other than the vertices
   i.e. $X_0$ has terminal singularities,
   then the Floer potential can be expressed as
  \begin{equation}
    m_0(L)(x) = \sum_{i=1}^m \exp(\langle v_i,x \rangle) T^{\ell_i(u_0)},
  \end{equation}
  summing over the the generators of the polar dual of the moment polytope,
  where~$\ell_i$ are the defining equations for the facets.
\end{alphatheorem}
%The more precise version of the statement is given in \cref{theorem:gennnu}.
%This is the key ingredient in the proof of mirror
% symmetry for $\odd$ and graph potentials,
%and we expect it can be used to study other instances.
This is the key ingredient in establishing the relationship
between $\odd$ and graph potentials as \emph{mirror partners}
and we expect it can be used to study other instances.

Applying this to our setup
we identify the Floer potential of the torus $L_\graph$
with the graph potential,
as in \cref{theorem:graphisdisc}.
In \cref{remark:second-betti} we will discuss a variation of the statement.
%This is related to the fact that the enumerative invariants counting holomorphic spheres (disks)
%passing through a point (with boundary on $L_\graph$)
%can be computed tropically: the respective curves can be degenerated
%to the regular locus of the singular toric variety
%and lifted to the complement of the exceptional locus on the resolution.

\paragraph{Classical periods and quantum periods}
This identification allows us to interpret the constant
 term of the $d$th power of the graph potential
as the number of maps from tropical trees to $\Delta_{\graph,c}$
in tropical descendant Gromov--Witten
invariant enumeration \cite{Rau-phd},
which in turn can be shown to agree with
the descendant Gromov--Witten invariant of $\odd$.
In particular, this proves the relation
between closed and open Gromov--Witten invariants
found and proved in the context of toric degenerations
independently by Bondal--Galkin \cite{Bondal-Galkin}
 and Nishinou--Nohara--Ueda \cite{Nishinou-Nohara-Ueda}.
Recently Tonkonog
proved a generalization of this relation for the case
of an abstract monotone Lagrangian torus \cite{1801.06921}.

Such a correspondence between
 the classical period of a Laurent polynomial
and the Gromov--Witten invariants of a Fano variety
goes by the name of \emph{mirror partner} \cite[\S3]{2112.15339}.
The main result we prove is the following
\begin{alphatheorem}
  \label{theorem:mirror-symmetry-introduction}
  Let~$(\gamma,c)$ be a trivalent colored graph of genus~$g$
  such that the total number of colored vertices is odd.
  Let~$C$ be a smooth projective curve of genus~$g$.
  Then the graph potential
  is a mirror partner of~$\odd$.
\end{alphatheorem}
From this identification between the classical period and
the regularized quantum period
we can also deduce the regularity of
regularized quantum periods,
see \cref{corollary:ode}.
\paragraph{Optimal monotone tori}
Let $X$ be any Fano manifold and let $\ast_0$ denote
the quantum multiplication in the small quantum
cohomology ring of $X$.
From Galkin--Golyshev--Iritani \cite{MR3536989}
 consider
\begin{equation}
  T_X:=\text{max}\{ |u|: u \in \mathbb{C} \ \text{is an eigenvalue of} \ c_1(X)\ast_0\} \in \overline{\mathbb{Q}}
\end{equation}
The invariant $T_X$ is a symplectic invariant of the manifold $X$.

Let~$L$ be a monotone Lagrangian torus in~$X$.
Consider the number $T_X':=\limsup_n ([m_0(L)^n]_{x^0})^{\frac{1}{n}}$,
where $[(m_0(L)^n]_{x^0}$ denotes the constant term of
the~$n$th power of the Laurent polynomial $m_0(L)(x)$.
By definition, the weighted number~$m_0(L)(1,\dots,1)$ of
holomorphic Maslov index 2 discs with boundary on a
monotone Lagrangian torus $L$ is bounded below by $T_X'$.

Observe that~$m_0(L)(1,\dots,1)$ also is equal to the value
$T_{\mathrm{con}}(L)$ of the disk potential
at the unique Morse point (also known as
\emph{conifold point}) of the Floer potential $m_0(L)$
in the domain $\mathbb{R}_{+}^{\dim L}\subset (\mathbb{C}^\times)^{\dim L}$.
 Since~$T'_X$ equals \cite{MR3536989,MR4384381} % GGI , GI
 to the inverse of the radius of convergence of the
 {\em regularized quantum period}
  function of~$X$, we get $T'_X=T_X$. Hence we have an inequality
\begin{equation}
  \label{equation:conifoldinequality}
 m_0(L)(x)= T_{\mathrm{con}}(L)\geq T_{X}
\end{equation}
and we say $L$ is \emph{optimal} if this inequality is an equality.

Let $\mathcal{N}_g$ be the symplectic manifold
whose algebraic model is $\odd$.
This Fano symplectic manifold is also
popularly known as
the rank two character variety and it is
equipped with the Atiyah--Bott--Goldman--Narasimhan symplectic form.
Muñoz \cite{MR1695800} has proved
that $T_{\mathcal{N}_g}=8g-8$.
As a corollary to \cref{theorem:mirror-symmetry-introduction}
combined with a Torelli-type theorem, we show that
\begin{alphacorollary}
  \label{corollary:optimal} (\cref{theorem:graphisdisc}
  and \cref{corollary:non-Hamiltonian-isotopic})
  For any trivalent colored $\gamma$ graph of genus
  $g$ with one colored vertex,
  the monotone Lagrangian torus $L_{\gamma}$ obtained
  as in \cref{proposition:alpha1} is optimal.
  Moreover for each pair of trivalent graphs $(\gamma, \gamma')$,
  the tori $L_{\gamma}$ and $L_{\gamma'}$ are not Hamiltonian isotopic.
\end{alphacorollary}

\begin{remark}
\Cref{corollary:optimal} highlights a key feature of
  the graph potentials and hence of the toric degenerations
   and the monotone Lagrangian tori we obtained.
    We are not aware of any examples of Fano manifolds
    with multiple optimal monotone Lagrangian tori.
\end{remark}
\paragraph{On mirror symmetry for Fano varieties}
The mirror dual of~$\odd$ is expected to be
a cluster-like variety equipped with a regular function,
the so-called Landau--Ginzburg potential
(closely related to the Floer potential),
that will play the central rôle in this story.
The graph potentials form a class of
possible hierarchies of such functions
adapted to the ``collective'' study of mirror symmetry
for the moduli spaces of local systems
on surfaces of all genera and all numbers of boundary components
simultaneously, yet still keeping the rank fixed.

There are a couple of interdependent theoretical
approaches to mirror symmetry for Fano varieties.
These provide different methods,
levels of rigour, constructivity, precision, predictive power,
scope of interests and applications.
But in what concerns experimental data,
so far in any approach mirror symmetry was mostly studied
either in low dimensions,
or for (Schubert cycles in) homogeneous varieties,
or for toric varieties,
or for their products (and some fibrations),
or for complete intersections therein,
see e.g.~\cite{MR2257391,MR3783413,MR3881789,MR3135701,MR3400560}.

On the other hand, moduli spaces
of rank~2~bundles
are Fano varieties which fall outside
 the usual scope of these methods,
and which are notorious for their falsifications
of naive conjectural extrapolations
 based on the data obtained from the examples above.
For example, in order to apply techniques
 such as the abelian/non-abelian correspondence
of \cite{MR2110629,MR2367022,MR2369087,MR2388562}
to compute quantum periods (and Givental $J$-functions),
the classification of
%Fano, Apéry, Iskovskikh and Mori--Mukai in terms of \git data,
Fano 3-folds was reworked in terms of GIT data in \cite[Theorem~A]{MR3470714}.
This theorem, which is proved in the first 105~sections of the paper,
says that all fibers in a family of Fano threefolds (and del Pezzo surfaces)
can be obtained as zero loci of regular sections
of a fixed vector bundle on a fixed ``key variety'',
which is isomorphic to a product of a toric variety
with a Grassmannian.
As a by-product this theorem implies that
\emph{moduli spaces of complex structures
on such Fano manifolds are unirational}.

In contrast, the (last) Section~106 of op.~cit.\ considers $\odd$:
by the non-abelian Torelli theorem (see e.g.~\cite[Theorem~E]{MR1336336})
the moduli space of complex structures on $\odd$ has a dominant rational
map to the Deligne--Mumford moduli space of stable curves.
For $g\geq 23$ these have Kodaira dimension at least~0
by \cite{MR0664324}.
This indicates that the mirror map
to the moduli space of complex structures on the Fano manifold $\odd$
from the complexified Kähler cone
%\footnote{I.e. an open domain in second cohomology with complex coefficients,
%that consists of classes $B+\ii\omega$ % (alternatively, $\omega + \ii B$)
%with the class (of the symplectic form) $\omega$ being Kähler
%and the class (of the so-called $B$-field) $B$ being arbitrary real.
%Thanks to Darboux--Moser--Weinstein theory it has an interpretation
%of the moduli space of symplectic structures,
%however it is closer to ``a Teichmüller space''.}
establishing mirror symmetry
has to be a transcendental map,
very different from all the known examples of mirror maps for Fano manifolds.

In \cite{gp-decomp} we will discuss aspects of
homological mirror symmetry
for~$\odd$ and graph potentials,
in order to understand certain (conjectural) decompositions
of invariants of~$\odd$
and graph potentials,
in a motivic,
symplectic
and categorical setting.

\paragraph{Complements}
The toric degeneration we have studied
allows us to recover a result of Kiem--Li on the
 terminality of the singularities of~$\even$,
at least for a generic curve~$C$.
We also state a conjecture on existence of
a small resolution of the toric degenerations
for~$\odd$ and~$\even$.
We refer the reader to \cref{theorem:terminal-criterion}, \cref{cor:KiemLi} and \cref{conjecture:small-resolution} for precise statements.

In \cref{section:explicit-calculations}
we will show how to compute classical periods
of graph potentials via combinatorial means,
or use algebro-geometric methods to compute
some quantum periods.
This shows how certain computations of periods
 can be performed on either side of the mirror.

\paragraph{Acknowledgements}
We want to thank
Najmuddin Fakhruddin,
Grigory Mikhalkin,
Paul Seidel
and Maxim Smirnov
for interesting discussions.

The frameworks that we use to establish mirror symmetry was set up
in an unpublished joint work of Alexey Bondal and S.G. \cite{Bondal-Galkin}.
Our treatment of Maslov index computations and holomorphic discs
uses constructions from joint works of S.G. and Grigory Mikhalkin,
\cite{Galkin-Mikhalkin-hq,Galkin-Mikhalkin,Galkin-Mikhalkin-2}
slightly adapted for our more special situation.

This collaboration started in Bonn in January--March 2018 during
the second author's visit to the
``Periods in Number Theory, Algebraic Geometry and Physics'' Trimester Program
of the Hausdorff Center for Mathematics (HIM)
and the first and third author's stay in
the Max Planck Institute for Mathematics (MPIM),
and the remaining work was done in
the Tata Institute for Fundamental Research (TIFR)
during the second author's visit in December 2019--March 2020
and the first author's visit in February 2020.
We would like to thank
HIM, MPIM and TIFR
for the very pleasant working conditions.
S.G. thanks Faculty of Mathematics of HSE University for
providing a sabbatical leave in 2018
and Mathematics Department of PUC-Rio for summer leave
in 2019-2020, which made possible the two visits above.

The initial progress on different parts of this work
was reported at
``Patchworking in Geometry and Topology'' conference in Belalp
(dedicated to the 70th anniversary of Oleg Viro)
and ``Geometry and Topology motivated by Physics''
at the Monte Verità Conference center in Ascona (Ticino).
We thank the partitipants for providing us with related references and context.
%The feedback from the participants greatly widened our understanding.
% Add other talks - MFO, Bengaluru, Swarnava

Several computations and experiments
were performed using
Pari/GP \cite{parigp}
and
Sage \cite{sagemath}
(in particular we used PALP \cite{palp} routines to analyze lattice polytopes).

The first author was partially supported by the FWO (Research Foundation---Flanders).
The second author was partially supported by CNPq grant PQ 315747.
The third author was partially supported by
the Department of Atomic Energy, India, under project no. 12-R\&D-TFR-5.01-0500
and also by the Science and Engineering Research Board, India (SRG/2019/000513).

% !TEX encoding = UTF-8 Unicode
\section{Toric degenerations of moduli spaces of vector bundles}
%\section{Toric degenerations}
\label{section:toric}
Abstracting from his earlier work with
Ciocan--Fontanine, Kim and van Straten \cite{MR1756568,MR1619529}
on mirror symmetry for Grassmannians and partial flag varieties,
Batyrev proposed in \cite{MR2112571,MR3289338}
a passage from
a ``small'' toric degeneration $X$
of a Fano variety~$F$
to a Laurent polynomial~$W$,
as a construction of a mirror dual to $F$.
See also \cite{2112.15339} for more context.
%%%
%% Given a toric degeneration $X\mapsto X_\delta$,
% One constructs
% a \LG model~$f: Y\to\A^1$
% which is mirror to the Fano variety $X$,
% starting with a Laurent polynomial~$W_\delta$
% which will have to coincide with~$f$
% on a torus~$(\Gm)^{\dim X}\subseteq Y$.
% XXX: certainly Batyrev did not propose this.
% XXX: The commented part above/below is misleading here and misplaced.
% XXX: It shall be moved to 'charts' discussion.
% For various questions
% in enumerative mirror symmetry
% the relationship between a Fano variety
% and its mirror Laurent polynomial
% (for which there are many choices)
% is sufficient, and we will discuss
% how the graph potentials we have introduced
% are mirror to~$\odd$.
% For the more general picture of mirror symmetry
% for this Fano variety
% one is referred to \cref{subsection:sod-vs-hms}.
%%

The toric degeneration for $\odd$ we are interested in
is related to the graph potentials introduced in \cite{gp-tqft}.
After recalling their construction and notation
we will describe a class of toric degenerations of~$\odd$ (and~$\even$)
introduced by Manon \cite{MR2928457}.
These two objects turn out to be closely related:
the Newton polytope of the graph potential associated to a trivalent graph~$\graph$
agrees with the moment polytope of Manon's degeneration
corresponding to the~$\graph$.

\subsection{Graph potentials}
\label{subsection:graph-potentials}
Let~$\graph=(V,E)$ be an undirected trivalent graph of genus~$g$,
possibly with loops.
We have that~$\#V=2g-2$ and~$\#E=3g-3$.
We will moreover assume it is connected.

We set
\begin{equation}
  \left\{
  \begin{aligned}
    \widetilde{N}_\graph&:=\CC^1(\graph,\mathbb{Z}) \\
    \widetilde{M}_\graph&:=\CC_1(\graph,\mathbb{Z})
  \end{aligned}
  \right.
\end{equation}
the free abelian groups of~1\dash(co)chains.

For every vertex~$v\in V$ we have the~3~edges~$e_{v_k},e_{v_l},e_{v_m}\in E$ adjacent to it.
These span a sublattice~$\widetilde{N}_v$ of~$\widetilde{N}_\graph$,
generated by the cochains~$x_i,x_j,x_k$ where we take~$x_i(e_{v_a})=\delta_{i,v_a}$.
For every~$v$ we can then consider the sublattice~$N_v$ spanned by the cochains~$\{\pm x_i\pm x_j\pm x_k\}$,
and we set
\begin{equation}
  N_\graph:=\operatorname{im}\left( \bigoplus_{v\in V}N_v\to\widetilde{N}_\graph \right).
\end{equation}
and
\begin{equation}
  M_\graph:=N_\graph^\vee.
\end{equation}
Finally we set
\begin{equation}
  A_\graph:=(\widetilde{N}_\graph/N_\graph)^\vee\cong M_\graph/\widetilde{M}_\graph
\end{equation}
which by \cite[Lemma~2.1]{gp-tqft} is isomorphic to~$\HH_1(\graph,\mathbb{F}_2)\cong\mathbb{F}_2^{\bigoplus g}$.
These are the relevant lattices,
relative to which all constructions in toric geometry are to be considered.

Associated to the different lattices we have algebraic tori
\begin{equation}
  \left\{
  \begin{aligned}
    T_\graph^\vee&:=\Spec\bC[N_\graph] \\
    \widetilde{T}_\graph^\vee&:=\Spec\bC[\widetilde{N}_\graph]
  \end{aligned}
  \right.
\end{equation}
Hence~$N_\graph$ and~$\widetilde{N}_\graph$ are the character lattice of~$T_{\graph}^{\vee}$ and~$\smash{\widetilde{T}_{\graph}^{\vee}}$,
and the cocharacter lattices of~$T_\graph$ and~$\smash{\widetilde{T}_\graph}$.
The group~$A_\graph$ is the kernel of the isogeny~$\smash{\widetilde{T}_\graph^\vee\to T_\graph^\vee}$.

The next ingredient is a coloring,
which is a function~$c\colon V\to\mathbb{F}_2$.
In the graph (co)homology language used above
this corresponds to a 0-chain on~$\graph$ with values in~$\mathbb{F}_2$.
If~$c(v)=0$ we will say that~$v$ is \emph{uncolored},
and if~$c(v)=1$ we say that~$v$ is \emph{colored}.

\begin{definition}
  Let~$(\graph,c)$ be a colored trivalent graph,
  with an enumeration of the edges given by~$e_1,\ldots,e_{3g-3}$,
  and let~$x_i$ be the coordinate function in~$\mathbb{Z}[\widetilde{N}_\graph]$ corresponding to~$e_i$.

  \begin{itemize}
    \item The \emph{vertex potential} for a vertex~$v\in V$ adjacent to the edges~$e_i,e_j,e_k$
      is defined as the Laurent polynomial
      \begin{equation}
        \label{equation:vertex-potential}
        \widetilde{W}_{v,c}:=
        \hspace{-1em}\sum_{\substack{(s_i,s_j,s_k)\in\bF_2^{\oplus 3} \\ s_i+s_j+s_k=c(v)}}\hspace{-1em}x_i^{(-1)^{s_i}}x_j^{(-1)^{s_j}}x_k^{(-1)^{s_k}}
        \in\mathbb{Z}[\widetilde{N}_v].
      \end{equation}
    \item The \emph{graph potential} of~$(\graph,c)$
      is defined as the Laurent polynomial
      \begin{equation}
        \widetilde{W}_{\graph,c}
        :=
        \sum_{v\in V}\widetilde{W}_{v,c}\in\mathbb{Z}[\widetilde{N}_\graph].
      \end{equation}
  \end{itemize}
\end{definition}
By \cite[Lemma~2.4]{gp-tqft}
the regular function~$\widetilde{W}_{\graph,c}$
on the torus~$\widetilde{T}_\graph^\vee$
descends to a function~$W_{\graph,c}$
on the torus~$T_\graph^\vee$.
We will study toric varieties with cocharacter lattice~$N_{\graph}$
whose moment polytope is the polar dual of the Newton polytope of~$W_{\graph,c}$.
However the lattice~$N_{\graph}$ has no natural basis,
hence we will not express~$W_{\graph,c}$ in terms of a basis of $N_{\graph}$.
%Instead we will work with~$\widetilde{W}_{\graph,c}$
%and reinterpret everything we need for applications in terms of~$W_{\graph,c}$.
We refer to \cite[\S2]{gp-tqft} for more details and examples.

By \cite[Corollary~2.9]{gp-tqft} the graph potential
depends only on the homology class of~$[c]$ in~$\HH_0(\graph,\mathbb{F}_2)$,
up to biregular automorphism of the torus
(hence under the connectedness assumption only on the parity).

\paragraph{Elementary transformations of graph potentials}
The second invariance result corresponds to the relation between
graph potentials for different choices of trivalent graphs (of fixed genus).
As explained in \cite[\S2.2]{gp-tqft} these correspond to different pair of pants decompositions
of a surface of genus~$g$,
and these are related via Hatcher--Thurston moves.
We are only interested in the operation from \cite[Figure~2]{gp-tqft},
and call this an \emph{elementary transformation}.
This is a procedure that transforms a colored trivalent graph~$(\graph,c)$
into a new colored trivalent graph~$(\graph',c)$.

We will now briefly recall the formulas for an elementary transformation of graph potentials,
for more details the reader is referred to \cite[\S2.2]{gp-tqft}.
Because a transformation is applied to an edge~$e\in E$ corresponding to a variable~$x$
we can always split the graph potential as
\begin{equation}
  \widetilde{W}_{\graph,c}=\widetilde{W}_{\graph,c}^{\text{mut}}+\widetilde{W}_{\graph,c}^{\text{frozen}}
\end{equation}
with the mutated part involving the variables attached to the vertices~$v_1$ and~$v_2$
incident to the edge~$e$.
There are (with possible repetitions) five variables involved:
the variables~$a,b,x$ for the vertex~$v_1$
and the variables~$c,d,x$ for the vertex~$v_2$,
see also \cite[Figure~3]{gp-tqft}.
The frozen part of the graph potential is not changed, and can be ignored.

If the parity of the coloring of the vertices~$v_1$ and~$v_2$ is odd,
then we have
\begin{equation}
  \begin{aligned}
    \widetilde{W}_{\graph,c}^{\text{mut}}
    &=xcd + \frac{x}{cd} + \frac{d}{cx} + \frac{d}{cx} + \frac{1}{abx} + \frac{ab}{x} + \frac{ax}{b} + \frac{bx}{a} \\
    &=\frac{1}{x}\left( ab + \frac{1}{ab} + \frac{c}{d} + \frac{d}{c} \right) + x\left( cd + \frac{1}{cd} + \frac{a}{b} + \frac{b}{a} \right), \\
    \widetilde{W}_{\graph',c}^{\text{mut}}
    &=x'bd + \frac{x'}{bd} + \frac{b}{dx'} + \frac{d}{bx'} + \frac{1}{acx'} + \frac{ac}{x'} + \frac{c}{ax'} + \frac{x'}{ac} \\
    &=\frac{1}{x'}\left( ac + \frac{1}{ac} + \frac{b}{d} + \frac{d}{b} \right) + x'\left( bd + \frac{1}{bd} + \frac{a}{c} + \frac{c}{a} \right).
  \end{aligned}
\end{equation}

If the parity of the coloring of the vertices~$v_1$ and~$v_2$ is even,
then we have
\begin{equation}
  \begin{aligned}
    \widetilde{W}_{\graph}^{\text{mut}}
    &=xcd + \frac{x}{cd} + \frac{c}{dx} + \frac{d}{cx} + abx + \frac{a}{bx} + \frac{x}{ab} + \frac{b}{ax} \\
    &=\frac{1}{x}\left( \frac{a}{b} + \frac{b}{a} + \frac{c}{d} + \frac{d}{c} \right) + x\left( cd + \frac{1}{cd} + \frac{1}{ab} + ab \right), \\
    \widetilde{W}_{\graph}^{\text{mut}}
    &=x'bd + \frac{x'}{bd} + \frac{b}{dx'} + \frac{d}{bx'} + acx' + \frac{a}{cx'} + \frac{c}{ax'} + \frac{x'}{ac} \\
    &=\frac{1}{x'}\left( \frac{b}{d} + \frac{d}{b} + \frac{a}{c} + \frac{c}{a} \right) + x\left( bd + \frac{1}{bd} + ac + \frac{1}{ac} \right).
  \end{aligned}
\end{equation}

By \cite[Theorems~2.12 and 2.13]{gp-tqft}
the graph potentials~$\widetilde{W}_{\graph,c}$ and~$\widetilde{W}_{\graph',c}$
are identified after a rational change of coordinates,
which is moreover invariant under the actions of the finite groups~$A_\graph$ and~$A_{\graph'}$.
We will come back to elementary transformations in the discussion surrounding the nature of mutations of Lagrangian tori,
see \cref{remark:speculation} and discussions preceeding it. 

Recall that given any Laurent polynomial $W$, the $m$-th classical period $c_m$ is defined to be the coefficient of the constant term of $W^m$. The following theorem relates the period of the graph potential $(\gamma,c)$ and $(\gamma',c')$
\begin{theorem}(Graph potential TQFT)\label{theorem:graphtqft} Let $(\gamma,c)$ and $(\gamma',c')$ be colored trivalent graphs of genus $g$ with same parity of the coloring, then for any positive integer $m$, the classical periods of the associated graph potentials agree.
	\end{theorem}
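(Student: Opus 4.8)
The plan is to establish the invariance of all classical periods by reducing to the combinatorial fact, already available in the excerpt, that any two colored trivalent graphs $(\gamma,c)$ and $(\gamma',c')$ of the same genus and same parity of coloring are connected by a finite sequence of elementary transformations (Hatcher--Thurston moves), and then to show that each individual elementary transformation preserves all classical periods. By \cite[Theorems~2.12 and 2.13]{gp-tqft}, an elementary transformation identifies $\widetilde{W}_{\graph,c}$ and $\widetilde{W}_{\graph',c}$ after a rational change of coordinates on the ambient torus; since the classical period $c_m$ of a Laurent polynomial $W$ is precisely the constant term $[W^m]_{1}$, i.e.\ the residue integral $\frac{1}{(2\pi\ii)^{3g-3}}\int_{|x_i|=1} W(x)^m\, \frac{dx_1}{x_1}\wedge\cdots\wedge\frac{dx_{3g-3}}{x_{3g-3}}$, and since this residue is invariant under any monomial (indeed any birational torus) change of coordinates that is an automorphism of the algebraic torus, the equality $c_m(\widetilde{W}_{\graph,c}) = c_m(\widetilde{W}_{\graph',c})$ follows once one checks that the change of coordinates of op.\ cit.\ is such an automorphism --- or at least that it induces the identity on the relevant quotient torus $T^\vee_\graph$ on which the period is actually computed.

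First I would recall the precise statement of \cite[Theorems~2.12 and 2.13]{gp-tqft}: the rational change of coordinates relating the mutated parts $\widetilde{W}^{\text{mut}}_{\graph,c}$ and $\widetilde{W}^{\text{mut}}_{\graph',c}$ (in the five variables $a,b,c,d,x$, with $x\mapsto x'$ the mutated edge) fixes all frozen variables and restricts to a mutation-type birational map in the coordinate $x$; crucially it is stated there to be invariant under the actions of the finite groups $A_\graph$ and $A_{\graph'}$. Reading off the displayed formulas, the change of variables is of the standard cluster-mutation shape $x \mapsto x' = x^{-1}\cdot(\text{monomial in the frozen variables})^{\pm}$ composed with a substitution making the two factored expressions $\frac{1}{x}(ab+\tfrac{1}{ab}+\tfrac{c}{d}+\tfrac{d}{c}) + x(cd+\tfrac{1}{cd}+\tfrac{a}{b}+\tfrac{b}{a})$ and its primed analogue coincide. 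Second I would invoke the elementary but essential lemma that the constant-term functional is invariant under any such mutation: if $\phi$ is a birational automorphism of $(\bC^\times)^n$ of the form treated here (a composition of monomial maps and ``mutations'' $x_j \mapsto x_j^{-1} P(x_1,\dots,\widehat{x_j},\dots)$, each preserving the holomorphic volume form $\bigwedge \frac{dx_i}{x_i}$ up to sign and sending the torus $|x_i|=1$ to a homologous cycle), then $[\,(W\circ\phi)^m\,]_1 = [\,W^m\,]_1$ for all $m$. This is exactly the content of the ``period invariance under mutation'' folklore (see e.g.\ Galkin's work on mutations of Laurent polynomials), and the needed special case --- monomial changes and a single cluster mutation in one variable --- can be verified directly by a Cauchy-integral / deformation-of-contour argument.

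Third, since the frozen part $\widetilde{W}^{\text{frozen}}_{\graph,c}$ is literally unchanged by the transformation and involves only variables disjoint from the mutated ones in the relevant sense, one decomposes $W^m = (W^{\text{mut}} + W^{\text{frozen}})^m$ by the binomial/multinomial theorem and applies the single-variable period-invariance to each term after integrating out $x$ first; the frozen variables are carried along inertly, so the equality of constant terms persists. Finally, iterating over a connecting sequence of elementary transformations gives $c_m(\widetilde{W}_{\graph,c}) = c_m(\widetilde{W}_{\graph',c'})$ for all $m$, which is the assertion. The main obstacle I anticipate is purely bookkeeping: one must make sure the rational change of coordinates of \cite[Theorems~2.12,2.13]{gp-tqft}, which a priori lives on the ``big'' torus $\widetilde{T}^\vee_\graph$ with character lattice $\widetilde{N}_\graph$, genuinely descends to a period-preserving map on the quotient torus $T^\vee_\graph$ where $W_{\graph,c}$ lives and where the period is intrinsically defined --- this is where the stated $A_\graph$- and $A_{\graph'}$-equivariance is used, since it guarantees the map is compatible with the isogenies $\widetilde{T}^\vee_\graph \to T^\vee_\graph$ and $\widetilde{T}^\vee_{\graph'} \to T^\vee_{\graph'}$. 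Once that compatibility is spelled out, no further geometric input is needed; everything else is the contour-deformation computation, which I would not carry out in detail here.
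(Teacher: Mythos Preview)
The statement is not proved in this paper; it is imported from the companion paper \cite{gp-tqft}, and the label ``Graph potential TQFT'' reflects the method used there. In \cite{gp-tqft} the classical period $[\widetilde{W}_{\graph,c}^m]_0$ is computed by separating the integration over the edge variables and recognising that the result assembles into the partition function of a two-dimensional TQFT on the genus-$g$ surface: a state space is attached to each cut circle and a trilinear tensor to each pair of pants, and the period is obtained by contracting along the trivalent graph. Independence of the pair-of-pants decomposition (and of the coloring within a parity class) is then a consequence of the TQFT gluing axioms, which is why the theorem carries that name.

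Your route---connect the two colored graphs by elementary transformations and invoke period-invariance under the rational change of coordinates of \cite[Theorems~2.12 and~2.13]{gp-tqft}---is a legitimate alternative strategy, but there is a gap at the crucial step. You assert that the change of coordinates is ``of the standard cluster-mutation shape $x\mapsto x^{-1}\cdot(\text{monomial})$''. It is not: the paper itself emphasises (see the discussion following \cref{remark:speculation}) that ``one new and peculiar feature is that elementary transformations of graph potentials naturally equate a sum of two monomials to a sum of two monomials\ldots\ in contrast to (various generalisations of) cluster algebras where a single mutation involves an equality between a monomial and a sum of two monomials.'' Concretely, writing the mutated part as $\frac{1}{x}A+xB$ and $\frac{1}{x'}A'+x'B'$, the identification comes from the identity $AB=A'B'$ of Laurent polynomials in $a,b,c,d$ and the substitution $x'=x\cdot A'/A$, which is a multiplication by a genuine \emph{rational function}, not a monomial. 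The standard contour-deformation argument you invoke for cluster mutations does not apply verbatim; one must check separately that this particular birational map preserves the holomorphic volume form (it does, since $A'/A$ is independent of $x$) and, more delicately, that after substitution the full potential including the frozen part remains Laurent in all variables and that no poles are crossed when deforming the integration cycle. This is exactly the content of the Laurent phenomenon for these non-standard mutations, and it is what \cite[Theorems~2.12 and~2.13]{gp-tqft} actually establish---but you would need to cite that explicitly rather than the generic ``folklore'' for cluster mutations, and then still supply the period-preservation argument for this specific form.
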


\subsection{Manon's degeneration}
\label{subsection:manon}
In \cite{MR2928457} Manon studied
the homogeneous coordinate rings
of the moduli \emph{stack} $\cM_{C;p_1,\ldots,p_n}(\SL_2)$
of quasi-parabolic bundles of rank~2.
The Picard group of this stack is~$X(B)^n\times\bZ$
where~$X(B)=\bZ\omega_1$ is the character group
of the Borel subgroup of~$\SL_2$.
Manon studies the algebraic properties
of the various graded algebras
one obtains by choosing a line bundle on the stack.

For us it will suffice to take~$n\leq 1$,
and hence we are interested in
the moduli stacks~$\cM_C(\SL_2)$
and~$\cM_{C,p}(\SL_2)$.
Then we denote\index{R@$R_C(b)$}\index{R@$R_{C,p}(a,b)$}
\begin{equation}
\begin{aligned}
R_{C}(b)     &:= \bigoplus_{\ell\geq 0}\HH^0(\cM_{C}  (\SL_2),\cL(b)^{\otimes\ell}) \\
R_{C,p}(a,b) &:= \bigoplus_{\ell\geq 0}\HH^0(\cM_{C,p}(\SL_2),\cL(a\omega_1,b)^{\otimes\ell})
\end{aligned}
\end{equation}
the graded algebras associated to the corresponding line bundles.

Using the theory of conformal blocks
and the Rees algebra construction
Manon defines
a \emph{flat degeneration of these graded algebras}
to the boundary~$\overline{\cM}_{g,n}\setminus\cM_{g,n}$
of the Deligne--Mumford compactification of the stack of pointed curves.
This stack has a stratification by closed substacks,
indexed by the weighted dual graphs of the stable curves.
The most degenerate curves one gets (with only rational components)
correspond to trivalent graphs of genus~$g$ and~$n$ half-edges
with all weights being zero,
corresponding to pair of pants decompositions we discussed earlier.

The algebras obtained in this way
are semigroup algebras associated to polytopes,
hence we get toric varieties.
To understand the relationship
between these algebras
and the varieties~$\even$
and~$\odd$,
recall that by Drézet-Narasimhan~\cite[Théorème~B]{MR0999313}
the Picard groups of these varieties are generated by the class of the Theta divisor.
By \textcite[Theorem~8.5 and Theorem~9.4]{MR1289330} % BL
(and more generally the works \cite{MR1394754,MR1289830,MR3982698})
we have identifications
\begin{equation}
  \begin{aligned}
    R_{C}(2)
    &\cong\bigoplus_{\ell\geq 0}\HH^0(\even,\Theta^{\otimes 2\ell}) \\
    R_{C,p}(2,2)
    &\cong\bigoplus_{\ell\geq 0}\HH^0(\odd,\Theta^{\otimes\ell}).
  \end{aligned}
\end{equation}
% of the homogeneous coordinate rings of these varieties
% with the polarization by the Theta divisor.
Hence we get a toric degeneration of the moduli spaces
we are interested in,
for every choice of trivalent graph~$\graph$ with at most one half-edge.
% We will make these statements more precise
% after introducing the relevant polytopes and dealing with half-edges accordingly.

\paragraph{Graphs from stable curves}
Let~$(C;p_1,\ldots,p_n)$ be a stable nodal curve with~$n$ marked points.
The \emph{dual graph} associated to~$(C;p_1,\ldots,p_n)$
is the weighted graph~$\graph=(V,E,g)$\index{Gb@$\graph=(V,E,g)$}
with~$n$ half-edges defined as
\begin{itemize}
  \item each vertex~$v_i\in V$ corresponds to an irreducible component~$C_i$ of~$C$, with weight~$g(v_i)$ the genus of~$C_i$;
  \item the number of edges between~$v_i$ and~$v_j$ is~$\#C_i\cap C_j$;
  \item for each marked point~$p$ in the component~$C_i$ we add a half-edge to the vertex~$v_i$.
\end{itemize}
The arithmetic genus of the curve~$C$ is then~$\#E-\#V+1+\sum_{v\in V}g(v)$.

We will only consider nodal curves with marked points~$(C;p_1,\ldots,p_n)$,
for which the dual graph~$\graph$ (containing half-edges)
is trivalent (and all weights are zero).
These correspond to the zero-dimensional strata in the stratification of~$\overline{\cM}_{g,n}\setminus\cM_{g,n}$
in terms of the type of the dual graph.
Equivalently, all components~$C_i$ are required to be rational.

We can rephrase the definition of Manon's polytope from \cite[Definition~1.1]{MR2928457} as follows.

\begin{definition}
  \label{definition:manon}
  Let~$\graph$ be a trivalent weighted graph of genus~$g$ with at most one half-edge. The polytope~$P_\graph\subseteq\bR^{\#E}$\index{P@$P_\graph$} is the set of non-negative real weightings of the edges of~$\graph$, such that
  \begin{itemize}
    \item the weights of half-edges are precisely~2;
    \item for every vertex~$v\in V$, if~$w_1(v),w_2(v),w_3(v)$ are the weights of the edges incident to it, then
      \begin{equation}
        2\max\{w_1(v),w_2(v),w_3(v)\}\leq w_1(v)+w_2(v)+w_3(v)\leq 4.
      \end{equation}
  \end{itemize}
  We define the lattice~$\cM_\graph$\index{M@$\cM_\graph$} using the lattice of integer points in~$\bR^{\#E}$ and the condition that~$w_1(v)+w_2(v)+w_3(v)\in2\bZ$ for every~$v\in V$.
\end{definition}

The following theorem then describes the sought-after toric degeneration, which is a special case of \cite[Theorem~1.2]{MR2928457} and \cite[Theorem~1.3]{MR3861806}.
\begin{theorem}[Manon]
  \label{theorem:manon}
  Let~$C$ be a smooth projective curve of genus~$g\geq 2$. Let~$\graph$ be a trivalent graph of genus~$g$, with at most one half-edge. Then
  \begin{itemize}
    \item if there are no half-edges in~$\graph$, then the homogeneous coordinate ring~$R_C(2)$ for~$\even$ can be flatly degenerated to the semigroup algebra associated to the polytope~$P_\graph$ in the lattice~$\cM_\graph$;
    \item if there is one half-edge in~$\graph$, then the homogeneous coordinate ring~$R_C(2,2)$ for~$\odd$ can be flatly degenerated to the semigroup algebra associated to the polytope~$P_\graph$ in the lattice~$\cM_\graph$.
  \end{itemize}
\end{theorem}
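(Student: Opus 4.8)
The plan is to assemble the statement from three ingredients: Manon's degeneration of conformal-block algebras \cite[Theorem~1.2]{MR2928457} together with its refinement \cite[Theorem~1.3]{MR3861806}, the identification of those algebras with the section rings of $\even$ and $\odd$ recalled just above, and a short bookkeeping step fixing the numerical parameters; since the assertion is a specialization of the cited results, the work is in the assembly rather than in new mathematics. First I would recall that $R_C(b)$ and $R_{C,p}(a,b)$ are by construction the total section algebras of the generalized theta line bundles on the stacks $\cM_C(\SL_2)$ and $\cM_{C,p}(\SL_2)$, and that by the Tsuchiya--Ueno--Yamada theory these are the fibres, at the smooth curve $[C]$ (resp.\ $[(C,p)]$), of the sheaf $\bV=\bigoplus_{\ell\ge 0}\bV_\ell$ of $\widehat{\mathfrak{sl}}_2$ conformal blocks on $\overline{\cM}_{g,n}$, with its fusion-algebra structure and, in the pointed case, prescribed weight $a\omega_1$ at the marked point. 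Each $\bV_\ell$ is locally free and compatible with restriction to the boundary strata by factorization, so the associated graded algebra --- and hence the Rees-algebra degeneration of Manon built from it --- is flat over $\overline{\cM}_{g,n}$; restricting to a closed point carrying a maximally degenerate stable curve (all components rational, equivalently all weights of the dual graph zero, which in the range of \cref{definition:manon} forces the dual graph to be a trivalent $\graph$ with at most one half-edge) produces a flat degeneration of $R_C(b)$, resp.\ $R_{C,p}(a,b)$, onto the conformal-block algebra $A_\graph$ of that nodal curve.

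Next I would unwind $A_\graph$ combinatorially. Iterating factorization along the edges of $\graph$ decomposes the level-$\ell$ part of $A_\graph$ into a direct sum over edge-labellings $e\mapsto c_e\omega_1$ with $c_e\in\bZ_{\ge 0}$ subject at each trivalent vertex to the $\widehat{\mathfrak{sl}}_2$ fusion rules at level $b\ell$ --- the parity condition $c_i+c_j+c_k\in 2\bZ$, the triangle inequalities $c_i\le c_j+c_k$, and the level bound $c_i+c_j+c_k\le 2b\ell$ --- together with $c=a\ell$ on the half-edge when present. The decisive input of \cite[Theorem~1.2]{MR2928457} / \cite[Theorem~1.3]{MR3861806} is that the fusion-product multiplication on $\bigoplus_\ell A_\graph$ is the \emph{toric} one: the product of the basis vectors attached to admissible labellings at levels $\ell$ and $\ell'$ is a nonzero scalar multiple of the basis vector attached to their sum at level $\ell+\ell'$. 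Hence $A_\graph$ is the semigroup algebra of the graded monoid of admissible integer labellings. Writing $w_e:=c_e$ and setting $a=b=2$, the $\ell$-th graded piece is cut out by exactly the conditions defining $\ell\cdot P_\graph\cap\cM_\graph$ of \cref{definition:manon}: half-edge weight $2\ell$, the inequalities $2\max_i w_i(v)\le\sum_i w_i(v)\le 4\ell$ at each vertex, and $\sum_i w_i(v)\in 2\bZ$. So $A_\graph$ is the semigroup algebra of $P_\graph$ in the lattice $\cM_\graph$.

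Finally I would match the two ends of the degeneration. Taking $b=2$, resp.\ $(a,b)=(2,2)$, and using $\Pic\even\cong\bZ\Theta$ and $\Pic\odd\cong\bZ\Theta$ by Drézet--Narasimhan, together with the identifications $R_C(2)\cong\bigoplus_\ell\HH^0(\even,\Theta^{\otimes 2\ell})$ and $R_{C,p}(2,2)\cong\bigoplus_\ell\HH^0(\odd,\Theta^{\otimes\ell})$ recalled above (this is where $g\ge 2$ enters, and where the doubling of parameters is forced: the ample class arising directly from $\SL_2$-conformal blocks is a proper multiple of $\Theta$, and one must pass to the coarse moduli space for $\odd$), the flat degeneration produced in the first two steps becomes a flat degeneration of the homogeneous coordinate ring of $\even$, resp.\ $\odd$, with respect to $\Theta$, onto the semigroup algebra of $P_\graph$ in $\cM_\graph$, which is the claim.

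The hard part is the ingredient flagged in the second step: that the conformal-block product degenerates to the monoid product --- equivalently, that $\bigoplus_\ell A_\graph$ is generated in bounded level with only the evident binomial relations, i.e.\ a SAGBI-basis statement for $R_C(2)$ adapted to $\graph$. This is the technical heart of Manon's work; it rests on explicit small-level $\mathfrak{sl}_2$ fusion together with a propagation/vanishing argument for conformal blocks, and for our purposes it may be quoted verbatim. The remaining steps --- local freeness and flatness, the factorization bookkeeping, translating the fusion rules into the inequalities of \cref{definition:manon}, and the Picard-group normalization --- are routine, the single point genuinely demanding care being the parameter choice $a=b=2$, since an error there would land one on a different polytope.
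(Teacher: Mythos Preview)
Your proposal is correct and follows exactly the route the paper takes: the paper does not prove this theorem at all but simply cites it as a special case of \cite[Theorem~1.2]{MR2928457} and \cite[Theorem~1.3]{MR3861806}, combined with the identification of the conformal-block algebras $R_C(2)$ and $R_{C,p}(2,2)$ with the section rings of $\even$ and $\odd$ recalled immediately before the statement. Your write-up is a faithful and more explicit unpacking of precisely this citation --- the factorization bookkeeping, the translation of the $\widehat{\mathfrak{sl}}_2$ fusion rules into the inequalities of \cref{definition:manon}, and the parameter choice $(a,b)=(2,2)$ --- and correctly flags that the only nontrivial input is Manon's result that the degenerate algebra is toric.
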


\begin{definition}
  The toric variety from \cref{theorem:manon} will be denoted~$\manontoric{\graph}$\index{Y@$\manontoric{\graph}$}. Observe that~$P_{\graph}$ is the moment polytope and $\cM_{\graph}$ is the character lattice.
\end{definition}

We will rewrite the inequality in \cref{definition:manon} so that the origin becomes an internal point of the polytope and the polytope becomes full-dimensional in the  character lattice of the toric variety. Setting~$u_i(v):= w_i(v)-1$ for the weight of the~$i$th edge at the vertex~$v$ we obtain the following lemma.
\begin{lemma}
  \label{lemma:inequalities-1}
  Let~$\graph$ be a trivalent graph of genus~$g\geq 2$ with at most one half-edge. The polytope~$P_\graph$ is defined by the inequalities
  \begin{itemize}
    \item if~$v$ is a vertex without half-edges, then
      \begin{equation}
        \label{equation:uncolored-inequalities}
        \begin{aligned}
          -u_1(v)-u_2(v)-u_3(v)&\geq-1 \\
          -u_1(v)+u_2(v)+u_3(v)&\geq-1 \\
          u_1(v)-u_2(v)+u_3(v)&\geq-1 \\
          u_1(v)+u_2(v)-u_3(v)&\geq-1;
        \end{aligned}
      \end{equation}
    \item if~$v$ is a vertex with a (necessarily unique) half-edge, then~$u_1=-u_2$ for the weights of the other two edges.
  \end{itemize}
\end{lemma}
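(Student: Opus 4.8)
The plan is to prove \cref{lemma:inequalities-1} by a direct substitution argument, translating the inequalities of \cref{definition:manon} into the shifted coordinates $u_i(v) = w_i(v) - 1$. The only content of the lemma is that the substitution $w_i \mapsto u_i + 1$ transforms the system
\begin{equation*}
  2\max\{w_1,w_2,w_3\} \leq w_1 + w_2 + w_3 \leq 4, \qquad w_i \geq 0
\end{equation*}
into the four inequalities \eqref{equation:uncolored-inequalities} at an interior vertex, and the equality $u_1 = -u_2$ at the half-edge vertex. First I would observe that the inequality $2\max\{w_1,w_2,w_3\} \leq w_1+w_2+w_3$ is, by definition of the maximum, equivalent to the three inequalities $2w_i \leq w_1 + w_2 + w_3$ for $i=1,2,3$, i.e.\ $w_j + w_k - w_i \geq 0$ for each of the three ways of singling out an index. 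Substituting $w_i = u_i + 1$ turns $w_j + w_k - w_i \geq 0$ into $u_j + u_k - u_i + 1 \geq 0$, which is exactly one of the last three lines of \eqref{equation:uncolored-inequalities} after rearranging. Likewise $w_1 + w_2 + w_3 \leq 4$ becomes $u_1 + u_2 + u_3 + 3 \leq 4$, i.e.\ $-u_1 - u_2 - u_3 \geq -1$, the first line of \eqref{equation:uncolored-inequalities}.

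The subtlety is that \cref{definition:manon} also imposes $w_i \geq 0$, i.e.\ $u_i \geq -1$, and these three constraints do not appear in \eqref{equation:uncolored-inequalities}; so I would need to check that $u_i \geq -1$ is implied by the four listed inequalities and is therefore redundant. This follows by adding pairs: for instance, summing the lines $-u_1 + u_2 + u_3 \geq -1$ and $u_1 + u_2 - u_3 \geq -1$ gives $2u_2 \geq -2$, hence $u_2 \geq -1$, and symmetrically for $u_1$ and $u_3$; summing $-u_1 - u_2 - u_3 \geq -1$ with $u_1 + u_2 - u_3 \geq -1$ gives $-2u_3 \geq -2$, which is consistent. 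So each bound $u_i \geq -1$ is a consequence of two of the facet inequalities, and in fact the four inequalities \eqref{equation:uncolored-inequalities} already cut out a bounded polytope (a reflexive simplex, the standard ``$\pm$'' tetrahedron) containing the origin in its interior — which also verifies the assertions in the surrounding text that $P_\graph$ is full-dimensional and contains the origin as an internal point. For the half-edge vertex, the half-edge weight is forced to be $2$ by the first bullet of \cref{definition:manon}, so writing $w_3 = 2$ for the half-edge the constraint $2\max\{w_1,w_2,2\} \leq w_1 + w_2 + 2 \leq 4$ forces $w_1 + w_2 \leq 2$ and $2 \cdot 2 \leq w_1 + w_2 + 2$, hence $w_1 + w_2 = 2$; in shifted coordinates this is $u_1 + u_2 = 0$, i.e.\ $u_1 = -u_2$, which is the second bullet of the lemma. (One also checks the remaining maxima: $2w_1 \leq w_1 + w_2 + 2$ becomes $w_1 - w_2 \leq 2$, automatically satisfied once $w_1 + w_2 = 2$ and $w_i \geq 0$.)

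There is essentially no hard step here; the one thing to be careful about is bookkeeping — making sure that the three ``triangle'' inequalities coming from $2\max \leq \text{sum}$ are matched correctly with the three sign patterns in \eqref{equation:uncolored-inequalities}, and that the redundancy of $w_i \geq 0$ is genuinely a logical consequence rather than an extra assumption being silently dropped. I would therefore structure the proof as: (1) rewrite $2\max\{w_1,w_2,w_3\} \leq w_1+w_2+w_3$ as three linear inequalities; (2) substitute $u_i = w_i - 1$ in all constraints; (3) note $u_i \geq -1$ is implied and hence \eqref{equation:uncolored-inequalities} is the full defining system; (4) handle the half-edge vertex separately using that its half-edge weight is pinned to $2$. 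The main ``obstacle,'' such as it is, is purely expository: presenting the redundancy argument cleanly so the reader sees that nothing has been lost in passing to the shifted coordinates.
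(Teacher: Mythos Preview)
Your proposal is correct and follows exactly the approach the paper takes: the paper simply states that the lemma is obtained by ``setting $u_i(v):=w_i(v)-1$'' and gives no further argument, treating the substitution as immediate. Your write-up is a faithful (and more detailed) expansion of that one-line justification, including the explicit check that the constraints $w_i\geq 0$ become redundant, which the paper leaves to the reader.
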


\paragraph{Removing half-edges}
To make the link to the setup of \emph{colored} trivalent graphs from \cref{subsection:graph-potentials}
we will explain how to reduce trivalent graphs with one half-edge to colored trivalent graphs.
A more general procedure can be constructed, but in this paper we are only interested in the case with one half-edge.

Let~$\graph=(V,E)$ be a trivalent graph of genus~$g\geq 2$ with precisely one half-edge.
Let~$v\in V$ be the vertex to which the half-edge~$e$ is attached.
Let~$e_1$ and~$e_2$ be the other edges in~$v$,
which are incident to the vertices~$v_1$ and~$v_2$.
Then we construct a \emph{colored} trivalent graph~$(\graph',c)$ as follows:
\begin{itemize}
  \item the vertices~$V'$ are~$V\setminus\{v\}$;
  \item the edges~$E'$ are~$(E\cup\{e_{1,2}\})\setminus\{e,e_1,e_2\}$,
    where~$e_{1,2}$ is an edge between~$v_1$ and~$v_2$
  \item we color precisely one vertex from~$\{v_1,v_2\}$.
\end{itemize}

Graphically we have the situation from \cref{figure:collapsing}.
\begin{figure}[h]
  \centering
  \begin{tikzpicture}[scale=2]
    % before
    \node[vertex] (A) at (0,0)   {};
    \node[vertex] (B) at (1,0)   {};
    \node[vertex] (C) at (0.5,0) {};

    \draw (A) + (0,0.05) node [above] {$v_1$};
    \draw (B) + (0,0.05) node [above] {$v_2$};
    \draw (C) + (0,0.05) node [above] {$v$};

    \draw (A) edge node [above] {$e_1$} (C);
    \draw (B) edge node [above] {$e_2$} (C);

    \draw[decorate, decoration=snake] (C) -- node [right] {$e$} (0.5,-0.5);

    % arrow
    \node at (2,0) {$\mapsto$};

    % after
    \node[vertex, fill] (Anew) at (3,0) {};
    \node[vertex]       (Bnew) at (4,0) {};

    \draw (Anew) edge node [above] {$e_{1,2}$} (Bnew);
    \draw (Anew) + (0,0.05) node [above] {$v_1$};
    \draw (Bnew) + (0,0.05) node [above] {$v_2$};
  \end{tikzpicture}
  \caption{Removing a half-edge}
  \label{figure:collapsing}
\end{figure}
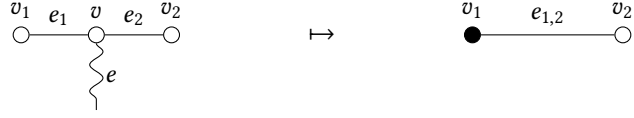

With this procedure we define a new polytope as follows.
\begin{definition}\label{definition:manon-polytope}
  Let~$\graph$ be a trivalent graph of genus~$g\geq 2$ with one half-edge.
  Let~$(\graph',c)$ be the colored trivalent graph
  obtained by removing the half-edge,
  where~$v_1\in V'$ is the colored vertex.
  The polytope~$P_{\graph',c}\subseteq\bR^{\#E'}$
  is defined by the same inequalities for all~$v\in V'\setminus\{v_1\}$,
  and in~$v_1$ we consider the inequalities
  \begin{equation}
    \label{equation:colored-inequalities}
    \begin{aligned}
      u_1(v_1)+u_2(v_1)+u_3(v_1)&\geq-1 \\
      u_1(v_1)-u_2(v_1)-u_3(v_1)&\geq-1 \\
      -u_1(v_1)+u_2(v_1)-u_3(v_1)&\geq-1 \\
      -u_1(v_1)-u_2(v_1)+u_3(v_1)&\geq-1.
    \end{aligned}
  \end{equation}
  Hence this polytope is defined in the subspace of~$\bR^{\#E}$
  given by the equality~$u_1(v_1)=u_2(v_2)$
  where~$u_1$ and~$u_2$ refer to the edges~$e_1$ and~$e_2$
  (recall that half-edges have a constant value assigned to them).
\end{definition}

\paragraph{The polytope~$P_{\graph,c}$}
Let~$(\graph,c)$ be a colored trivalent graph of genus $g$ with no half-edges.
We use~$\widetilde{M}_\graph$ and~$\widetilde{N}_\graph$ as in \cref{subsection:graph-potentials}.

Consider the half-spaces~$H_i^{c(v)}$ (for~$i=1,2,3,4$) defined by the inequalities
\begin{equation}
  \label{equation:half-spaces-general-i}
  \begin{aligned}
  	(-1)^{c(v)}(-u_1-u_2-u_3)&\geq -1,\\
  	(-1)^{c(v)}(-u_1+u_2+u_3)& \geq -1, \\
  	(-1)^{c(v)}(u_1-u_2+u_3)&\geq -1,\\
  	(-1)^{c(v)}(u_1+u_2-u_3)& \geq -1,
  \end{aligned}
\end{equation}
where~$u_1,u_2,u_3$ correspond to the three edges incident at the vertex~$v$.
Observe that the inequalities~\eqref{equation:half-spaces-general-i}
specialize to the inequalities~\eqref{equation:uncolored-inequalities}
(resp.~\eqref{equation:colored-inequalities}) if~$c(v)=0$ (resp.~~$c(v)=1$).

\begin{definition}
  Let $(\graph,c)$ be a colored trivalent graph with no half-edges. Let $V$ be the set of vertices and $E$ be the set of edges of $\graph$. Consider the polytope~$P_{\graph,c}$ defined by the intersection of half-edges in $\widetilde{M}_{\graph}$\index{P@$P_{\graph,c}$}
  \begin{equation}
    P_{\graph,c}=\bigcap_{v\in V}\left( H_1^{c(v)}\cap H_2^{c(v)}\cap H_3^{c(v)}\cap H_4^{c(v)}\right). %\right)\cap\left( \bigcap_{v\neq v_0}H_1^v\cap H_2^v\cap H_3^v\cap H_4^v \right).
  \end{equation}
\end{definition}

We can also describe the polar dual~$P_{\graph,c}^\circ \in \widetilde{N}_{\graph}$\index{P@$P_{\graph,c}^\circ$} as follows.
For~$v\in V$ a vertex with coloring~$c(v)$,
and~$e_i,e_j,e_k$ the edges adjacent to~$v$
we can consider the subgraph~$\graph_v$,
and define the vector space
\begin{equation}
  \label{equation:colored-cycles}
  C_{v,c}:=\{s\in\CC_1(\graph_v,\bF_2)\mid\dd(s)=c(v)\}.
\end{equation}
For~$v\in V$ and~$s\in C_{v,c}$ we can then define the point
\begin{equation}\label{equation:vertices}
  p(v,s):=(0,\ldots,0,(-1)^{s_i},0,\ldots,0,(-1)^{s_j},0,\ldots,0,(-1)^{s_k},0,\ldots,0)\in N_{\graph} \subset \bR^{\#E}
\end{equation}
with~$\pm1$ in positions~$i,j,k$.

The following standard lemma then makes
the description as a convex hull of the polytope we are interested in explicit.
\begin{lemma}
  \label{lemma:polytope-description}
  Let~$(\graph,c)$ be a colored trivalent graph of genus~$g\geq 2$ with no half-edges. %Let~$(\graph',c)$ be the associated colored trivalent graph.
  The polar dual~$P^{\circ}_{\graph,c}$ of the polytope~$P_{\graph,c}$
  is the convex hull of the points~$p(v,s)$, where~$v\in V$ and~$s\in C_{v,c}$.
\end{lemma}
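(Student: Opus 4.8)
The plan is to prove that $P^\circ_{\graph,c}$, which was defined as the polar dual of the polytope $P_{\graph,c}$ cut out by the half-spaces $H_i^{c(v)}$, coincides with the convex hull of the points $p(v,s)$ for $v\in V$ and $s\in C_{v,c}$. The strategy is the standard duality between a polytope given by facet inequalities and its polar dual given as a convex hull of vertices: if $P = \{u : \langle u, q_\alpha\rangle \geq -1 \text{ for all } \alpha\}$ then $P^\circ = \operatorname{conv}\{q_\alpha\}$, provided the origin is interior to $P$. So the essential content is the bookkeeping that identifies the normal vectors of the facet inequalities in \eqref{equation:half-spaces-general-i} with exactly the points $p(v,s)$.

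First I would recall the elementary fact from convex geometry: for a full-dimensional polytope $P\subseteq\bR^{\#E}$ containing the origin in its interior and presented as $P=\bigcap_\alpha\{u:\langle u,q_\alpha\rangle\geq -1\}$, the polar dual is $P^\circ=\operatorname{conv}\{q_\alpha\}$. (One also needs that each $q_\alpha$ is genuinely a vertex, but for the convex-hull description it suffices to note that the set of inequalities is finite, so $P^\circ$ is the convex hull of the finitely many $q_\alpha$, with redundant ones being absorbed.) I would cite \cref{lemma:inequalities-1} and the surrounding discussion, together with the fact — established when rewriting via $u_i(v)=w_i(v)-1$ — that the origin lies in the interior of $P_{\graph,c}$, so that this duality applies.

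Next, the matching step: fix a vertex $v\in V$ with adjacent edges $e_i,e_j,e_k$ and coloring $c(v)$. The four inequalities $H_1^{c(v)},\dots,H_4^{c(v)}$ in \eqref{equation:half-spaces-general-i} are of the form $\langle u, q\rangle\geq -1$ where $q$ has entries $\pm 1$ in coordinates $i,j,k$ and zero elsewhere; tabulating the four sign patterns $(-1)^{c(v)}(-,-,-)$, $(-1)^{c(v)}(-,+,+)$, $(-1)^{c(v)}(+,-,+)$, $(-1)^{c(v)}(+,+,-)$ shows that as $s=(s_i,s_j,s_k)$ ranges over $C_{v,c}=\{s\in\CC_1(\graph_v,\bF_2)\mid \dd(s)=c(v)\}$ — i.e.\ over the four triples with $s_i+s_j+s_k=c(v)$ in $\bF_2$ — the vectors $((-1)^{s_i},(-1)^{s_j},(-1)^{s_k})$ run through exactly these four sign patterns. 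Here one uses that $(-1)^{c(v)}(-1,\dots)$ flips all three signs simultaneously, which is precisely the effect of passing from $\dd(s)=0$ to $\dd(s)=1$ on the parity of $s_i+s_j+s_k$. Hence $q=p(v,s)$ in the notation of \eqref{equation:vertices}, and the normal vectors of the facet inequalities at $v$ are exactly $\{p(v,s):s\in C_{v,c}\}$. Taking the union over all $v\in V$ gives the full list of normal vectors defining $P_{\graph,c}$, and therefore $P^\circ_{\graph,c}=\operatorname{conv}\{p(v,s):v\in V,\ s\in C_{v,c}\}$.

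The main obstacle is essentially notational rather than mathematical: one must be careful that the inequalities in \eqref{equation:half-spaces-general-i} are written in the ambient $\bR^{\#E}$ while the points $p(v,s)$ land in $N_\graph\subseteq\bR^{\#E}$, and that the polar dual is taken with respect to the correct pairing between $\widetilde{M}_\graph\otimes\bR$ and $\widetilde{N}_\graph\otimes\bR$. I would spell out that $P_{\graph,c}\subseteq\widetilde{M}_\graph\otimes\bR$ and $P^\circ_{\graph,c}\subseteq\widetilde{N}_\graph\otimes\bR$ under the standard dual basis indexed by edges, so that the coordinatewise description makes sense, and remark that the convex hull of the $p(v,s)$ automatically lies in $N_\graph$ since each $p(v,s)$ is one of the generators $\pm x_i\pm x_j\pm x_k$ of $N_v$. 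With this identification in place, the statement follows immediately from the polar-duality fact quoted above, and this is why the lemma is labelled "standard."
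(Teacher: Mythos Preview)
Your approach is correct and matches the paper's treatment: the paper gives no proof at all, simply introducing the lemma with the phrase ``the following standard lemma,'' so the polar-duality argument you outline is precisely the intended (implicit) justification. One minor caution on the bookkeeping in your matching step: the four base sign patterns in \eqref{equation:half-spaces-general-i} have product $-1$, so after multiplying by $(-1)^{c(v)}$ the normal vectors satisfy $\prod_i (-1)^{s_i} = (-1)^{c(v)+1}$ rather than $(-1)^{c(v)}$; you should double-check that this agrees with the paper's convention for $\dd(s)=c(v)$ in the definition of $C_{v,c}$, as the paper's notation here is somewhat informal.
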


This allows us to make the next definition.
\begin{definition}
  Let~$\graph,c$ be a colored trivalent graph of genus~$g$ with no half-edges.
  The \emph{graph potential toric varieties}~$X_{P_{\graph,c},M_{\graph}}$\index{X@$X_{P_{\graph,c},M_{\graph}}$}
  (respectively~$X_{P_{\graph,c}, \widetilde{M}_{\graph}}$)\index{X@$X_{P_{\graph,c}, \widetilde{M}_{\graph}}$}
  are defined as the toric varieties with moment polytope~$P_{\graph,c}$ in the character lattice~$M_{\graph}$ (respectively~$\widetilde{M}_{\graph}$).
\end{definition}
We will often refer to the polar dual $P^{\circ}_{\gamma,c}$ as the fan polytope of the toric variety~$X_{P_{\graph,c},M_{\graph}}$.
We can now compare Manon's toric degeneration to the toric varieties obtained from the graph potentials,
and show that they are the same.

\begin{proposition}
  \label{proposition:isomorphism-of-toric-varieties}
  Let~$\graph$ be a trivalent graph of genus~$g$,
  with at most one half-edge.
  Let~$(\graph',c)$ be the associated colored trivalent graph
  obtained from removing half-edges.
  There exists an isomorphism
  \begin{equation}
    \manontoric{\graph}\cong\gptoric{\graph'}{c}{M}.
  \end{equation}
\end{proposition}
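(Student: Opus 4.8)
The plan is to show that the two toric varieties have isomorphic fans. The fan of a projective toric variety is the inner normal fan of its moment polytope, and this fan is insensitive to translating the polytope; so it suffices to produce an isomorphism of cocharacter lattices $\cM_\graph^\vee\xrightarrow{\sim}N_{\graph'}$ carrying the normal fan of Manon's polytope $P_\graph$ to that of the graph potential polytope $P_{\graph',c}$, equivalently an isomorphism $\cM_\graph\xrightarrow{\sim}M_{\graph'}$ carrying $P_\graph$, up to translation, to $P_{\graph',c}$. I would assemble this from two comparisons, one of the polytopes and one of the lattices.

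\emph{Polytopes.} Suppose first that $\graph$ has no half-edge, so $\graph'=\graph$ and $c\equiv 0$. By \cref{lemma:inequalities-1}, in the shifted coordinates $u_i(v)=w_i(v)-1$ the polytope $P_\graph$ is cut out by the inequalities \eqref{equation:uncolored-inequalities} at every vertex, and these are exactly the $c(v)=0$ specialization of the defining half-spaces \eqref{equation:half-spaces-general-i}; hence $P_\graph$ translated by $-(1,\dots,1)$ is literally $P_{\graph,0}$. If instead $\graph$ has one half-edge $e$ at a vertex $v$, I would restrict to the affine hull of $P_\graph$, which by \cref{lemma:inequalities-1} is the locus where the weight of $e$ is $2$ and $u_{e_1}=-u_{e_2}$, and identify it with $\bR^{\#E'}$ by letting the merged edge $e_{1,2}$ of $\graph'$ carry the coordinate $u_{e_1}$, so that $e_2$ carries $-u_{e_1}$. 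The sign then flips the $e_{1,2}$-coordinate in the four inequalities \eqref{equation:uncolored-inequalities} at the vertex incident to $e_2$, converting them into \eqref{equation:colored-inequalities}, while the vertex incident to $e_1$ is untouched and the constraints at $v$ become redundant: this is precisely the recipe of \cref{definition:manon-polytope}, so $P_\graph$ becomes $P_{\graph',c}$. By \cite[Corollary~2.9]{gp-tqft} it is irrelevant which of the two neighbours of $v$ receives the colour.

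\emph{Lattices.} The heart of the matter is that $\tfrac12\,\cM_\graph$ is identified with $M_{\graph'}$ inside $\bR^{\#E'}$, equivalently $2\,\cM_\graph^\vee=N_{\graph'}$; the rescaling by $\tfrac12$ records that Manon's polarization is a multiple of $\Theta=-K/2$ rather than $-K$ itself. In the half-edge case, restricting $\cM_\graph$ to the affine hull of $P_\graph$ and transporting along the edge-merging above identifies it with the Manon lattice of $\graph'$ itself (the parity relation at $v$ becomes vacuous on this locus, and those at the neighbours of $v$ descend to $\graph'$), which reduces everything to the half-edge-free case. There $\cM_\graph$ is the lattice of integral edge-weightings $w$ with $\sum_{e\ni v}w_e$ even for all $v$, so $\tfrac12\,\cM_\graph$ is the lattice of half-integral $y$ with $\sum_{e\ni v}y_e\in\bZ$ for all $v$; writing $\bar y$ for the class of $2y$ in $\bF_2^{\#E}$, this condition says that $\bar y$ pairs to zero with every vertex cochain $\sum_{e\ni v}x_e$, i.e.\ that $\bar y\in\HH_1(\graph,\bF_2)$, and by \cite[Lemma~2.1]{gp-tqft} these are exactly the classes realised by the overlattice $M_\graph=N_\graph^\vee$ of $\widetilde M_\graph$. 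Since multiplication by $2$ is a lattice isomorphism $\cM_\graph^\vee\xrightarrow{\sim}N_{\graph'}$ under which any fan is carried to itself, it sends the normal fan of $P_\graph$ to the normal fan of $P_{\graph',c}$, and therefore $\manontoric{\graph}\cong\gptoric{\graph'}{c}{M}$.

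I expect the lattice identification to be the genuine obstacle: $\cM_\graph$ is a finite-index \emph{sublattice} of the edge lattice $\widetilde M_\graph$, whereas $M_{\graph'}$ is a finite-index \emph{overlattice}, so the two become isomorphic only after the rescaling, and one must check that this rescaling is compatible with both the polytope comparison and the half-edge reduction. The polytope comparison is largely bookkeeping, but the orientation conventions in $\CC_1(\graph',\bZ)$ that govern which neighbour of $v$ becomes coloured deserve attention; as noted, \cite[Corollary~2.9]{gp-tqft} guarantees the final answer is independent of that choice.
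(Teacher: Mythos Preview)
Your argument is correct and follows the same strategy as the paper: translate Manon's polytope by $(1,\dots,1)$ and identify $\cM_\graph$ with $2M_{\graph'}$, so that the normal fans coincide. The paper simply asserts the lattice identity (citing Faust--Manon for the translation), whereas you actually verify it via the dual characterisation of $M_{\graph'}=N_{\graph'}^\vee$ and \cite[Lemma~2.1]{gp-tqft}; your treatment of the half-edge reduction is likewise more explicit than the paper's, which leaves that step implicit.
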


\begin{proof}
  By \cite[Proposition~2.3.9]{MR2810322},
  for any~$\omega$ in the character lattice of a toric variety,
  the moment polytope $P$ and the polytope $\omega+P$ have the same normal fan,
  and in turn give isomorphic toric varieties.

  Faust--Manon \cite[\S3]{MR4025429}
  translates the moment polytope of $\manontoric{\graph}$ by $\omega=(2,\dots, 2) \in \cM_{\graph}$
  to get the origin as the unique interior point.
  Observe that we have the identification of the lattices~$\cM_{\graph}=2{M}_{\graph'}$.
  Hence, we have to scale down by a factor of~$2$.
  Starting from Manon's equations for the polytope $P_{\graph}$,
  we translate it by the vector $(1,1,\dots,1)$ in $M_{\graph'}$
  to get the equations of the reflexive polytope $P_{\graph',c}$.
  The result follows.
\end{proof}

In particular have we found another realization of Manon's toric degeneration
using an explicit full-dimensional reflexive polytope.

As an application of the description of the toric degeneration
in terms of the combinatorics of trivalent graphs
we will in \cref{section:singularities}
discuss when these toric degenerations have terminal singularities
or admit a small resolution.
These properties will depend on the choice of trivalent graph
used for the construction of the degeneration. 
Terminality will be crucially used in~\cref{section:potentials}.
%We relegate these results to an appendix:
%in \cref{section:potentials} we will prove comparison results
%that do not require these two conditions to hold.
%but in the light of earlier comparison results
%who require these properties to go through
%and for purely algebro-geometric reasons
%it is interesting to know when they hold.

%The final observation to make is that the graph potential,
%defined in terms of the lattice~$\widetilde{M}_{\graph}$
%can also be interpreted in terms of~$M_{\graph}$.
%\begin{remark}
%  Moreover, by construction the Newton polytope of graph potential $\widetilde{W}_{\graph,c}$ (also $W_{\graph,c}$) is the polytope $P^{\circ}_{\graph,c}$.
%\end{remark}

\section{Monotone Lagrangian tori from graph potentials}
We will now consider~$\odd$ and the toric degenerations considered in \cref{subsection:manon}
as symplectic manifolds,
and use symplectic transport to construct monotone Lagrangian tori in~$\odd$
in \cref{proposition:monotone-torus}.
Every choice of trivalent graph leads to a toric degeneration,
and as explained in \cref{corollary:non-Hamiltonian-isotopic}
we obtain as many monotone Lagrangian tori which are not Hamiltonian-isotopic to each other
as we have trivalent graphs of genus~$g$.
The case~$\even$ is similar.

\subsection{Reminder on integrable systems}
\label{subsection:integrable-system}
Let us recall the notion of
an integrable system
on a symplectic manifold~$(M,\omega)$
of real dimension~$2n$.
\begin{definition}
\label{definition:integrable-system}
An \emph{integrable system} on~$(M,\omega)$ is
a collection of~$n$ functionally independent
real-valued~$C^\infty$ functions~$\{H_1,\ldots,H_n\}$
on the manifold~$M$ which are pairwise Poisson-commutative,
i.e.~$\{H_i,H_j\}=0$ for all~$i,j=1,\ldots,n$,
where~$\{-,-\}$ is the Poisson bracket induced by the symplectic form~$\omega$.
We will denote it~$\Phi\colon M\to\bR^n$.\index{P@$\Phi: (M,\omega)\to\bR^n$}
\end{definition}

By definition, an integrable system induces a Hamiltonian~$\bR^n$\dash action on~$M$. By the Arnold--Liouville theorem we have that any regular, compact, connected orbit of this action gives a Lagrangian torus in~$M$.

We will work with integrable systems on singular toric varieties. In that set-up we want the real-valued~$C^{\infty}$ functions to Poisson-commute on the smooth locus.

Natural examples of integrable systems are found in the theory of toric varieties.
Here we let~$X_0$ be a toric variety of complex dimension~$n$,
and consider the torus action~$(\bC^\times)^n$ on~$X_0$.
The moment map for the torus action
with respect to any torus-invariant K\"ahler form gives an integrable system.

\paragraph{Integrable systems from toric degenerations}
Nishinou--Nohara--Ueda and Harada--Kaveh \cite{MR2609019,MR3425384}
construct integrable systems from a toric degeneration
of a smooth projective variety
satisfying some additional conditions.
We briefly recall their construction.

Let~$B$ be a manifold containing two points $0$ and $1$ and let~$\pi\colon\mathfrak{X}\to B$\index{p@$\pi\colon\mathfrak{X}\to\bA^1$}
be a flat family of complex projective varieties of dimension~$n$,
% XXX-proj: need morphism to be projective, not just all fibers (or XXX-kahler is sufficient)
% XXX: also affine line on the base is not good, but loc.cit. allow any curve (or a disc)
% XXX: check (cf. with Manon degeneration)
% Pieter: it seems to be fine? The notation A^1 for C is the only thing I can notice being different from what is written there?
such that~$X_0:=\pi^{-1}(0)$ is a toric variety,
whilst the general fiber~$X_t:=\pi^{-1}(t)$ for~$t\neq 0$ is smooth.
Assume moreover that
\begin{itemize}
  \item the singular locus of the total space~$\mathfrak{X}$
        is contained in the singular locus of~$X_0$;
  \item the regular part of~$\mathfrak{X}$ has a K\"ahler form
        which restrict to a torus-invariant K\"ahler form on the regular part~$X_0^\regular$ of~$X_0$.
\end{itemize}

The choice of a piecewise smooth curve~$I\colon[0,1]\to B$ from~$0$ to~$1$ in~$B$
gives
%avoiding the non-zero critical values gives\checkthis{toric degenerations in NNU and HK are isotrivial outside 0},
by symplectic parallel transport along~$I$
a symplectomorphism~$\widetilde{I}\colon X_0^\regular\to X_1^\regular$,
where~$X_1^\regular$ is an open subset of~$X_1$.

The singular toric variety has a natural map
$\Phi_0\colon X_0 \rightarrow \mathbb{R}^n$
which makes it an integrable system.
It is well-known that
the image of $\Phi_0$
is the moment polytope $P$ associated to the toric variety $X_0$.

Using the symplectomorphism~$\widetilde{I}$
we can transport the integrable system
$\Phi_0\colon X_0^\regular\to\bR^n$
to the integrable system
\begin{equation}
\Phi_1 := \Phi_0\circ\widetilde{I}^{-1}\colon X_1^\regular\to\bR^n
\end{equation}
on~$X_1$.
Observe that the same flow gives a~$C^{\infty}$ map
from the compact manifold~$X_1$ to the compact space~$X_0$,
however over the preimage of the singularities,
one cannot extend the torus action.
A suitable choice of the form $\omega$ and path $I$
makes the map real-analytic, cf.~\cite{MR1769452}.
Hence $\Phi_1$ extends as  a map between
$X_1\rightarrow \mathbb{R}^n$
but the functions only Poisson-commute
on an open dense subset of $X_1$.
% Kutzschebauch-Loose, also

Let~$P$ denote the moment polytope of the toric variety~$X_0$. Let
\begin{equation}
  \ell_i(u):=\langle v_i,u\rangle-\tau_i
  \label{equation:ell}
\end{equation}
for~$i\in\{1,\ldots,m\}$ denote the affine equations defining the polytope, i.e.
\begin{equation}
  P:=\Phi_0(X_0)=\{ u\in\bR^n\mid \forall i=1,\ldots,m\colon\ell_i(u)\geq 0 \}.
\end{equation}
The convex hull of the~$-v_i/\tau_i$'s as in~\eqref{equation:ell} is the polar dual~$P^\circ$ of the moment polytope~$P$. The polytope~$P^{\circ}$ will be referred to be as the fan polytope.

Further observe that we have a natural inclusion~$\iota : X_0 \to \mathfrak{X}$ and the map
\begin{equation}\label{eqn:totalintsys}
~c: \mathfrak{X} \to X_0
\end{equation} given by inverse of the symplectic transport on each fiber $X_{t}$. For each $t\in I$, we can consider maps
\begin{equation}
c_t:=c\circ \iota_t: X_t\rightarrow X_0,
\end{equation} where~$\iota_t :X_t\to \mathfrak{X}$ is the natural inclusion. In particular~$\Phi_1= \Phi_0\circ c_1$.

Let $u\in P$ be an interior point of $P$ and let $L_0(u)$ be a Lagrangian torus in $X_0$. By symplectic parallel transport via the map $c_t$, we get a Lagrangian torus
\begin{equation}\label{eqn:lagtori}
L_t(u):=(\Phi_0\circ c_t)^{-1}(L_0(u))\subset X^{\regular}_t.
\end{equation} These tori are all isomorphic for a fixed $u \in P$ and for any $t\in I$.

In \cref{section:toric} we have discussed a toric degeneration of~$\odd$,
and we have now recalled the construction of an integrable system.
We will use the following notation when we apply this construction
with~$X_1$ as~$\odd$.
\begin{definition}
  \label{definition:the-integrable-system}
  Let~$(\graph,c)$ be a trivalent graph with one colored vertex. Then
  \index{P@$\Phi_{\graph,c}$}
  \begin{equation}
    \Phi_{\graph,c}\colon\odd\to\bR^{3g-3}
    \label{equation:nnu-system}
  \end{equation}
  is the map obtained by applying the Nishinou--Nohara--Ueda construction
  to Manon's toric degeneration.
  $\Phi_{\gamma,c}$ gives an integrable system when restricted to $\odd^\regular$.
  Its image will be denoted~$\Delta_{\graph,c}$.
\end{definition}

\subsection{Construction of monotone Lagrangian tori in \texorpdfstring{$\odd$}{the moduli space of rank 2 bundles}}
Now to every trivalent graph~$\graph$ of genus $g$ without leaves
we will associate a monotone Lagrangian torus $L_\graph$
on the moduli space~$\odd$. In \cref{corollary:non-Hamiltonian-isotopic} we will
comment on how the monotone Lagrangian tori for different choices
of~$\graph$ can be compared to each other.

First we recall the definition of Maslov index.
Let $L$ be a Lagrangian submanifold \index{L@$L$} $L$ in
a symplectic manifold $(M,\omega)$,
and let $D$ be a two-dimensional disk $D$ with boundary
a circle $\partial D \cong \sphere^1$.
For a continuous map $f\colon (D,\partial D) \to (M,L)$
its restriction $\partial f\colon \partial D \to L$ is a loop on $L$.
Denote the class of $f$ in the relative homotopy or homology group
as $[D,\partial D] = [D] = [f]  \in \pi_2(M,L) \to \HH_2(L,\bZ)$
and the boundary class as
\begin{equation}
[\partial D] = [\partial f] = \partial[f] \in \pi_1(L) \cong \HH_1(L,\bZ).
\end{equation}
The bundle $\LGr(\mathrm{T}M)$ of Lagrangian Grassmannians
$\LGr(\mathrm{T}_m M) \cong \LGr(\bR^{\dim_\bR M}) =: \LGr$
trivializes when pulled back to the contractible space $D$.
\begin{definition}
  \label{definition:maslov}
  The trivialization of
  the pullback of $\LGr(\mathrm{T}M)$
  to $D$ via $f$ gives the tangential section
  $p\to \mathrm{T}_{f(p)} L \in \LGr(\mathrm{T}M_{f(p)})$
  of $(\partial f)^* \LGr(\mathrm{T}M) \cong \partial D \times \LGr$
  a class in $\pi_1(\LGr) \cong \HH_1(\LGr) \cong \bZ$
  known as the \index{M@$\mu(D)$} \emph{Maslov index}\footnote{This notation hides the mapping $f$ from a pair $(D,\partial D)$ to $(M,L)$.}~$\mu(D)$
	We will further call such maps \emph{disks of Maslov index $\mu$}.

  For an orientable $L$ the lifting of $L$ to the orientable Lagrangian Grassmannian $\LGr^+$
  guarantees that the Maslov index takes even values, i.e.
  $\mu(D) \in \pi_1(\LGr^+) = 2\bZ \subset \bZ = \pi_1(\LGr)$.
\end{definition}

%For the assumptions on the torus~$L\subset X_1$ we r
Recall the following definition.
\begin{definition}
\label{d:monotone}
Let~$L$ be a Lagrangian submanifold of
a symplectic manifold $(M,\omega)$.
We say that~$L$ is (positive) \emph{monotone}
if the symplectic area~$\HH_2(M,L;\bZ)\to\bR$
and the Maslov index~$\HH_2(M,L;\bZ)\to\bZ$
are (positively) proportional to each other.
\end{definition}

We now discuss a general result
which is implicit in the works of
Nishinou--Nohara--Ueda and Ritter
\cite{MR2609019,MR2879356,MR3548462}.

\begin{lemma}
\label{lemma:monotone}
Let~$X_1$ be a simply connected smooth variety of dimension~$n$
admitting a degeneration to a normal toric variety~$X_0$.
For any interior point~$u \in P$,
let~$L=L_1(u)$ be a Lagrangian torus in~$X_1$
constructed via symplectic parallel transport.
Identify $\HH_1(L,\bZ)$ with the cocharacter lattice~$N$ of the toric variety~$X_0$.
Assume that the vertices of~$P^{\circ}$ generate~$N$.
Then there is a isomorphism
\begin{equation}
\HH_2(X_1,L;\bZ) \cong N \oplus \HH_2(X_1,\bZ) .
\end{equation}
\end{lemma}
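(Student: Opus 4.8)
The plan is to build the isomorphism from the long exact sequence of the pair $(X_1, L)$ in homology, together with the identification of the relevant groups furnished by the toric degeneration. First I would write down the segment
\begin{equation}
  \HH_2(L;\bZ) \to \HH_2(X_1;\bZ) \to \HH_2(X_1,L;\bZ) \to \HH_1(L;\bZ) \to \HH_1(X_1;\bZ).
\end{equation}
Since $X_1$ is simply connected, $\HH_1(X_1;\bZ)=0$, so the map $\partial\colon \HH_2(X_1,L;\bZ)\to\HH_1(L;\bZ)\cong N$ is surjective. The key point is then that $\HH_2(L;\bZ)\to\HH_2(X_1;\bZ)$ is the zero map: because $L$ is a Lagrangian torus, $\omega$ vanishes on $\HH_2(L;\bZ)$, and more to the point every class in the image of $\HH_2(L;\bZ)$ is represented by a torus mapped into a neighbourhood that, via the degeneration, lies in (a small perturbation of) the big torus orbit of $X_0$, whose second homology dies in $X_1$. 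Granting this, the sequence breaks into a short exact sequence
\begin{equation}
  0 \to \HH_2(X_1;\bZ) \to \HH_2(X_1,L;\bZ) \to N \to 0,
\end{equation}
and it remains to split it.

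To produce the splitting I would use the hypothesis that the vertices of $P^{\circ}$ generate $N$. Concretely, for each vertex $v_i$ of $P^{\circ}$ — equivalently, each facet $\ell_i$ of the moment polytope $P$ — Nishinou--Nohara--Ueda's analysis of holomorphic discs (the setup recalled around \cref{theorem:nnu} and used in \cref{theorem:magic-lemma}) provides a Maslov index two disc class $\beta_i \in \HH_2(X_1,L;\bZ)$ with $\partial\beta_i = v_i \in N$; these are the classes whose areas are the exponents $\ell_i(u_0)$ appearing in the disc potential. Since the $v_i$ generate $N$, choosing a basis $v_{i_1},\dots,v_{i_n}$ of $N$ among them gives $\beta_{i_1},\dots,\beta_{i_n}$ with $\partial\beta_{i_k}$ a basis of $N$, hence a section $N\to\HH_2(X_1,L;\bZ)$ of $\partial$. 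This yields the direct sum decomposition $\HH_2(X_1,L;\bZ)\cong N\oplus\HH_2(X_1;\bZ)$ as claimed. (Alternatively, and more robustly, one can split using the area/intersection pairing: the classes $\beta_i$ together with $\HH_2(X_1;\bZ)$ span, and a dimension count against $\rk\HH_2(X_1,L)=\rk\HH_2(X_1)+n$ forces freeness.)

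The main obstacle is the vanishing of $\HH_2(L;\bZ)\to\HH_2(X_1;\bZ)$, i.e.\ controlling the image of $\HH_2$ of the Lagrangian torus. The clean argument is symplectic rather than topological: $L=L_1(u)$ is obtained by parallel transport from a Lagrangian torus $L_0(u)$ sitting in the open orbit $(\bC^\times)^n \subset X_0$, so $L$ bounds in $X_1$ the image under $\widetilde I$ of a Lagrangian neighbourhood of $L_0(u)$, which is diffeomorphic to a neighbourhood in $(\bC^\times)^n$; any $2$-cycle on the torus $L$ therefore bounds a $3$-chain in $X_1$ and dies in $\HH_2(X_1;\bZ)$. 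One must be slightly careful because $\widetilde I$ is only defined on the regular part $X_0^{\regular}$ and maps to an open subset $X_1^{\regular}$, but the hypothesis that $\operatorname{Sing}(\mathfrak X)\subseteq\operatorname{Sing}(X_0)$ and that $L_0(u)$ lies in the smooth open orbit means $L$ and a tubular neighbourhood of it lie safely inside $X_1^{\regular}$, so no class is lost. Once this is in hand the rest is the routine homological algebra above.
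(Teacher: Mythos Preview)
Your overall strategy---the long exact sequence of the pair, surjectivity of $\partial$ from simple connectedness, and a splitting---matches the paper. But there is a genuine gap, and you have also misidentified where the hypothesis on the vertices of $P^\circ$ is actually needed.

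First, the splitting step is a red herring: once you have the short exact sequence
\[
0 \to \HH_2(X_1;\bZ) \to \HH_2(X_1,L;\bZ) \to N \to 0,
\]
it splits automatically because $N\cong\bZ^n$ is free abelian. You do not need to manufacture disc classes $\beta_i$ for this.

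The real issue is the vanishing of $\HH_2(L;\bZ)\to\HH_2(X_1;\bZ)$, and your argument for it does not work. You write that $L$ sits in the image of a neighbourhood of $L_0(u)$ in $(\bC^\times)^n$, and that ``any $2$-cycle on the torus $L$ therefore bounds a $3$-chain in $X_1$''. But a tubular neighbourhood of $L_0(u)$ in $(\bC^\times)^n$---indeed, all of $(\bC^\times)^n$---deformation retracts onto $L_0(u)\cong(S^1)^n$, so the map $\HH_2(L_0(u))\to\HH_2((\bC^\times)^n)$ is an \emph{isomorphism}, not zero. No $2$-cycle on $L$ bounds in such a neighbourhood. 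Likewise, the observation that $\omega|_L=0$ only says the image lies in the kernel of $\cap[\omega]$, which need not be trivial.

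This is precisely where the paper spends its effort, and where the hypothesis that the vertices $v_i$ of $P^\circ$ generate $N$ enters. The paper passes not to a neighbourhood of $L_0(u)$ but to the larger toric stratum $X_0^{\onecosk}\subset X_0^\regular$: the toric variety whose fan consists only of the rays $\bR_{\geq 0}v_i$. This space is homotopy equivalent to $L$ with a disc $D_{v_i}$ attached along the circle in $L$ corresponding to each $v_i$. Any basis $2$-torus $S^1\times S^1\subset L$ can be pushed into a chart $D_{v_i}\times(\bC^\times)^{n-1}$ where one $S^1$-factor bounds, so its class dies; since the $v_i$ generate $N=\HH_1(L)$, every $2$-class on $L$ dies in $\HH_2(X_0^{\onecosk})$, hence in $\HH_2(X_0^\regular)$, and then via parallel transport in $\HH_2(X_1)$. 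In short: you used the generating hypothesis to split, where it is unnecessary, and omitted it from the vanishing step, where it is essential.
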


\begin{proof}
Consider the long exact sequence for the homology of the pair~$(X_1,L)$,
\begin{equation}
\label{equation:ses}
    \HH_2(L,\bZ)
\to \HH_2(X_1,\bZ)
\to \HH_2(X_1,L;\bZ)
\to \HH_1(L,\bZ)
\to \HH_1(X_1,\bZ).
\end{equation}
% Because~$L\cong(\sphere^1)^{3g-3}$ we get that~$\pi_2(L)=0$.
The last term vanishes, as~$X_1$ is simply connected.
Moreover by \cref{lemma:sergeytorictrick},
the image of $\HH_2(L,\bZ)$ in $\HH_2(X_1,\bZ)$ is zero.
\iffalse
Firstly for any~$n$-torus~$\left(\bC^*\right)^n$,
consider the natural embedding
$\iota: \sphere^1 \times \sphere^1 \times {\pt} \rightarrow
        \bA^1 \times \left(\bC^{*}\right)^{n-1}.$
Then the image of~$\HH_2(\sphere^1\times \sphere^1,\bZ)$ under~$\iota$ is zero.

 Now for every ray $r_i$ of $P^{\circ}$, consider the divisor $D_{r_i}\in X_1^{\reg}$. The natural inclusion

% \[ L\rightarrow \cup_{r_i} D_{r_i}\times \bA^{1}\subset X_1^{\reg} \]
% induces a trivial map on second homology.
% Since the rays $r_i$ generate $N$ by assumption, we are done.
\fi
Hence we obtain a short exact sequence from~\eqref{equation:ses},
which allows us to identify
\begin{equation}\label{equation:skeleton}
\begin{aligned}
\HH_2(X_1,L;\bZ)
&\cong\HH_1(L;\bZ)\oplus\HH_2(X_1;\bZ) \\
&\cong N\oplus \HH_2(X_1;\bZ).  \\
\end{aligned}
\end{equation}
\end{proof}

The following lemma is used in the proof of \cref{lemma:monotone},
however it does not require the assumption that $X_1$ is projective.
\begin{lemma}
\label{lemma:sergeytorictrick}
In the situation of  \cref{lemma:monotone}
the map $\HH_2(L,\bZ) \rightarrow \HH_2(X_1,\bZ)$ is zero.
\end{lemma}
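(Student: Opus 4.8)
The plan is to show that every 2\dash cycle on the Lagrangian torus $L$ becomes nullhomologous in $X_1$, by producing enough holomorphic (or at least symplectic) $2$\dash discs with boundary on $L$ to ``cap off'' a generating set of $\HH_1(L,\bZ)\cong N$. Concretely, I would argue as follows. First, work on the toric side: for the singular toric variety $X_0$, each facet equation $\ell_i$ of the moment polytope $P$ determines a toric divisor $D_i\subset X_0$, and the corresponding primitive inward normal vector $v_i$ is exactly a vertex of the fan polytope $P^\circ$. The standard toric picture (as in the discussion around~\eqref{equation:ell}, and in~\cite{MR2609019,MR2879356}) gives, for the standard moment\dash map torus fiber $L_0(u)$ over an interior point $u\in P$, a holomorphic disc $\beta_i\colon(D,\partial D)\to(X_0,L_0(u))$ meeting $D_i$ transversally once and disjoint from the other toric divisors, whose boundary class $\partial\beta_i\in\HH_1(L_0(u),\bZ)\cong N$ is precisely $v_i$. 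Since by hypothesis the vertices $v_i$ of $P^\circ$ generate $N$, the boundary classes of these discs generate $\HH_1(L_0(u),\bZ)$.

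The second step is to transport this to $X_1$. All the discs $\beta_i$ may be taken to lie over a small neighbourhood of $u$ inside $P$, hence inside $X_0^\regular$, so by the symplectomorphism $\widetilde I\colon X_0^\regular\to X_1^\regular$ (symplectic parallel transport) they become discs $\widetilde I\circ\beta_i\colon(D,\partial D)\to(X_1^\regular,L)\subset(X_1,L)$ with boundary classes generating $\HH_1(L,\bZ)$ under the identification $\HH_1(L,\bZ)\cong N$ fixed in the statement. Now take any class $\sigma\in\HH_2(L,\bZ)$ and view it in $\HH_2(X_1,\bZ)$. Since $L$ is a torus, $\HH_2(L,\bZ)$ is generated by products of pairs of the circle factors, i.e.\ by classes of the form $\gamma_1\times\gamma_2$ with $\gamma_1,\gamma_2\in\HH_1(L,\bZ)$; it therefore suffices to kill such a class. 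Pick a disc $\beta$ with $\partial\beta=\gamma_1$ (a $\bZ$\dash linear combination of the $\beta_i$ does the job once $\gamma_1$ is expressed in the generating set). Then the torus $T^2=\gamma_1\times\gamma_2$ bounds in $X_1$ the $3$\dash chain swept out by pushing $\beta$ around the $\gamma_2$\dash circle — more precisely, glue $\beta\times\gamma_2$ (an annulus times a circle, i.e.\ a solid torus mapped into $X_1$) along $L$ to see that $[\gamma_1\times\gamma_2]=0$ in $\HH_2(X_1,\bZ)$. Hence the whole image of $\HH_2(L,\bZ)\to\HH_2(X_1,\bZ)$ vanishes.

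I expect the main obstacle to be the first step: carefully producing the Maslov index two discs $\beta_i$ in the \emph{singular} toric variety $X_0$ and checking that they avoid the singular locus (so that they survive under $\widetilde I$) and that their boundaries realize the vertices $v_i$ of $P^\circ$. For a smooth toric variety this is the classical computation of Cho--Oh, but here $X_0$ is only Gorenstein Fano, so one must either restrict attention to the smooth toric locus $X_0^\regular$ (which contains the relevant torus orbit and the generic points of all toric divisors) or invoke the version of this disc count in \cite{MR2879356,MR3548462} valid for toric degenerations. A clean alternative that sidesteps holomorphic discs entirely: observe that the torus orbit closures give, for each $i$, an embedded $2$\dash sphere or disc in $X_0^\regular$ witnessing that the image of $\HH_2(L_0(u),\bZ)\to\HH_2(X_0^\regular,\bZ)$ — and hence, after transport, $\HH_2(L,\bZ)\to\HH_2(X_1,\bZ)$ — is zero, because $\HH_1$ of the big torus $(\bC^\times)^n$ injects into $\HH_1$ of $X_0^\regular$ via the divisor classes $[D_i]$ dual to the generators $v_i$; the $2$\dash cycles on $L$ pair trivially against all of these, forcing them to die. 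Either route reduces the lemma to the generation hypothesis on the vertices of $P^\circ$, which is precisely what is assumed.
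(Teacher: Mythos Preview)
Your approach is essentially the same as the paper's: both arguments produce, for each vertex $v_i$ of $P^\circ$, a disc in the toric variety bounding the circle in $L$ corresponding to $v_i$, and then use the generation hypothesis on $N$ to kill all $2$\dash cycles of $L$. The paper phrases this via the $1$\dash coskeleton toric variety $T_V=X_0^\onecosk$ (homotopy equivalent to $L$ with discs $D_{v_i}$ attached) and the affine charts $D_{v_i}\times(\bC^\times)^{n-1}$, where the $2$\dash torus $S^1\times S^1$ manifestly dies because one $S^1$\dash factor maps into the contractible $D_{v_i}$; your ``sweep $\beta$ along $\gamma_2$'' is exactly this, once said in coordinates.

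Two small corrections are in order. First, your claim that ``all the discs $\beta_i$ may be taken to lie over a small neighbourhood of $u$ inside $P$'' is wrong: the disc $\beta_i$ necessarily hits the toric divisor $D_i$, which sits over the corresponding facet of $P$, not near $u$. The correct reason the discs stay in $X_0^\regular$ is that a normal toric variety is smooth in codimension one, so the $1$\dash coskeleton $X_0^\onecosk$ (big torus together with the open codimension\dash one orbits) is contained in $X_0^\regular$. Second, the ``$\beta\times\gamma_2$'' step needs the ambient chart structure to make sense: for an arbitrary $\gamma_1=\sum a_i v_i$ you cannot literally sweep a formal $\bZ$\dash combination of discs along $\gamma_2$. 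The clean fix is to first use bilinearity of the cross product, $[\gamma_1\times\gamma_2]=\sum a_i[v_i\times\gamma_2]$ in $\HH_2(L)=\Lambda^2 N$, and then kill each $[v_i\times\gamma_2]$ inside the single chart $\bC\times(\bC^\times)^{n-1}$ associated with $v_i$, where the solid torus $\{|z_1|\leq r\}\times S^1\times\{\pt\}$ does the job. With these adjustments your argument and the paper's coincide.
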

\begin{proof}
Let $P^\regular$ be the image of the moment map
from the smooth part~$X_0^\regular$ of the toric variety~$X_0$.
Let~$X_0^\onecosk$ be the toric variety
whose fan is the 1-skeleton of the fan of $X_0$
and let~$P^\onecosk$ be the image of ~$X_0^\onecosk$ under the moment map.
In particular~$P^\onecosk$ is a union of open faces of codimension at most one.
Let $P^\interior$ be the interior of $P$.

% Let $P^\onecosk$ be the~$1$-coskeleton of the polytope~$P$.
Let~$X_0^\interior$ denote the inverse image of~$P^\interior$.
Similarly define $X_1^\interior$.
Both~$X_1^\interior$ and~$X_0^\interior$ contain a torus isomorphic to~$L$.
Consider the following diagram.
\begin{equation}\label{eqn:keyladder}
\begin{tikzcd}
X_1 \arrow[r,] & X_0 \arrow[r, twoheadrightarrow] & P \\
X_1^\regular \arrow[u, hook]\arrow[ r, "\cong"] & X_0^\regular \arrow[r,twoheadrightarrow]\arrow[u, hook] &P^\regular\arrow[u,hook]\\
X_1^\onecosk \arrow[u, hook]\arrow[ r, "\cong"] & X_0^\onecosk \arrow[r,twoheadrightarrow]\arrow[u, hook] &P^\onecosk\arrow[u, hook]\\
X_1^\interior \arrow[u, hook]\arrow[ r, "\cong"] & X_0^\interior \arrow[r,twoheadrightarrow]\arrow[u, hook] &P^\interior\arrow[u, hook]\\
L=L_1(u) \arrow[r,"\cong"] \arrow[u,hook]& L=L_0(u) \arrow[r,twoheadrightarrow]\arrow[u, hook] & u\arrow[u, hook]
\end{tikzcd}
\end{equation}
Since the tori~$L_(u)$ and~$L_0(u)$ are isomorphic by construction, we denote both of them by~$L$ in this section.

By construction $P^\regular = P\backslash P^\singular$,
where $P^\singular$ is the singular locus of $P$.
To show that the image of $\HH_2(L,\bZ)$ in $\HH_2(X_1,\bZ)$ is zero
it would suffice to show that the image of $\HH_2(L,\bZ)$
is zero in $\HH_2(X_0^\regular,\bZ)$
while going up along the middle column of the diagram~\cref{eqn:keyladder}.

Now let $V=\{v_1,\dots, v_m\}$ be the vertices of $P^{\circ}$.
Each $v_i$ corresponds to a unit sphere $\sphere^1$ in~$L$.
The toric variety $T_V$ whose fan is $\bigcup_{v_i\in V}\bR_{\geq 0}v_i$
is homotopy equivalent to $L\cup\bigcup_{v_i \in V} D_{v_i}$,
where $D_{v_i}$ is a disk whose boundary is the one-cycle corresponding to $v_i$.
By our notation $T_V$ is nothing by $X_0^\onecosk$.
We have the following diagram:
\begin{equation}
\begin{tikzcd}
\sphere^1 \arrow[r, hook] \arrow[d, hook]& L  \arrow[d, hook]\\
	    D_{v_i}\arrow[r,hook] & T_{V}.
\end{tikzcd}
\end{equation}

Hence $\HH_1(T_V,\bZ) = L/\bigoplus_{v_i\in V}\bZ v_i$.
But by assumption $V$ generates $L$.
Hence $\HH_1(T_V,\bZ)$ is zero.
Moreover the natural embedding of
\begin{equation}
  \sphere^1\times \sphere^1 \times p \hookrightarrow L \hookrightarrow D_{v_i}\times \left(\bC^\times\right)^{n-1}
\end{equation}
induces the zero map between $\HH_2(\sphere^1\times \sphere^1, \bZ)\rightarrow \HH_2(D_{v_i}\times \left(\bC^\times\right)^{n-1},\bZ)$. Thus we are done.
\end{proof}

\begin{remark}
The toric variety $T_V$ and its homotopy retract to the union of
the Lagrangian torus $L$ with attached holomorphic disks $D_i$
is borrowed from Galkin--Mikhalkin \cite{Galkin-Mikhalkin-hq},
where it is used to compute the (non-abelian) relative second homotopy group
$\pi_2(T_V,L)$ responsible for a ``quantization'' of the Floer potential of surfaces.
\end{remark}

%\begin{remark}
%Monotonicity as it originally appears in the work of \textcite{Floer-symp}
%is a weaker statement, for which proportionality
%of the Maslov index and symplectic area on the spherical part of homology would suffice.
%This can be deduced easier using asphericity of the torus.
%\end{remark}

The following proposition gives us a useful criterion to test
whether the Lagrangian torus~$L_1(u)$ in \cref{lemma:monotone} is monotone.
\begin{proposition}
  \label{proposition:monotonegenerali}
  Let $u\in P$ be an interior point as in \cref{lemma:monotone}
  such that that $\ell_i(u)=\ell>0$,
  where $\ell_i$ are the length functions~\eqref{equation:ell} defining the polytope $P$.
  Further assume that~$(X_1,\omega)$ is monotone and the degeneration preserves second Betti numbers.
  Then the Lagrangian torus~$L_1(u)$ is monotone.
\end{proposition}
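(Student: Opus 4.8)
The plan is to check the two conditions of \cref{d:monotone} directly: the symplectic area $\HH_2(X_1,L;\bZ)\to\bR$ and the Maslov index $\HH_2(X_1,L;\bZ)\to\bZ$ of \cref{definition:maslov} are both group homomorphisms, and we want them to be positively proportional. Since \cref{lemma:monotone} applies (its hypotheses being in force), we may use the decomposition $\HH_2(X_1,L;\bZ)\cong N\oplus\HH_2(X_1;\bZ)$, in which the boundary map identifies the first summand with $\HH_1(L;\bZ)=N$ and the second summand is the image of the sphere classes. Because both functionals are additive, it suffices to verify proportionality on a generating set of this group, and I would split this into a ``disc'' computation and a ``sphere'' computation and then match the two constants.

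For the disc part: let $v_1,\dots,v_m\in N$ be the vertices of the fan polytope $P^\circ$ (the primitive ray generators of the fan of $X_0$), which generate $N$ by the hypothesis of \cref{lemma:monotone}, and for each $i$ let $\beta_i\in\HH_2(X_1,L;\bZ)$ be the class of the basic Maslov-index-$2$ holomorphic disc with $\partial\beta_i=v_i$ — the disc $D_{v_i}$ of \cref{lemma:sergeytorictrick}, transported from $X_0$ through the parallel-transport symplectomorphism $X_0^\regular\cong X_1^\regular$. Two facts are needed and are standard for toric degenerations, cf.\ \cite{MR2609019,MR2879356,MR3548462}: $\mu(\beta_i)=2$, and $\int_{\beta_i}\omega=c\,\ell_i(u)$ for a constant $c>0$ independent of $i$, where $\ell_i$ are the length functions of \eqref{equation:ell} (the area of a basic toric disc is proportional to the lattice distance from $u$ to the facet, and area and Maslov index are preserved by the symplectomorphism). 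The crucial point is the standing hypothesis $\ell_i(u)=\ell$ for all $i$: it forces all basic discs to have the \emph{same} area $c\ell$, so that $\int_{\beta_i}\omega=\tfrac{c\ell}{2}\,\mu(\beta_i)$ for every $i$. Since the boundaries $\partial\beta_i=v_i$ generate $N$, the classes $\beta_i$ together with the subgroup $\HH_2(X_1;\bZ)\subset\HH_2(X_1,L;\bZ)$ generate the whole group.

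For the sphere part and the matching of constants: on a class $S$ coming from $\HH_2(X_1;\bZ)$ one has $\mu(S)=2\,c_1(X_1)\cdot S$, and monotonicity of $(X_1,\omega)$ gives $c_1(X_1)=\lambda[\omega]$ with $\lambda>0$, so $\int_S\omega=\tfrac{1}{2\lambda}\,\mu(S)$ for all such $S$. It remains to see that $\tfrac{1}{2\lambda}=\tfrac{c\ell}{2}$. Since the fan of $X_0$ is complete, the $v_i$ positively span $N_\bR$, so $0=\sum_i t_iv_i$ with all $t_i>0$; clearing denominators gives integers $a_i$ with $\sum_i a_iv_i=0$ and $\sum_i a_i\neq 0$. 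Then $S:=\sum_i a_i\beta_i$ satisfies $\partial S=\sum_i a_iv_i=0$, so $S$ lies in the $\HH_2(X_1;\bZ)$-summand, and additivity gives $\int_S\omega=c\ell\sum_i a_i$ and $\mu(S)=2\sum_i a_i$; comparing with $\int_S\omega=\tfrac{1}{2\lambda}\mu(S)$ and cancelling $\sum_i a_i\neq 0$ yields $\lambda=\tfrac{1}{c\ell}$. Hence $\int_\alpha\omega=\tfrac{c\ell}{2}\,\mu(\alpha)$ on the generating set $\{\beta_i\}\cup\HH_2(X_1;\bZ)$, so on all of $\HH_2(X_1,L;\bZ)$ (both sides kill torsion), with $\tfrac{c\ell}{2}>0$; therefore $L=L_1(u)$ is monotone. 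The hypothesis that the degeneration preserves $b_2$ enters in this matching step in the equivalent guise of identifying $\HH_2(X_1;\bQ)$ with $\HH_2(X_0;\bQ)$ compatibly with the wall classes $\sum_i a_i\beta_i$, which is the route taken when one prefers to argue the sphere part entirely on the toric side.

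I expect the main obstacle to be the careful passage between the \emph{singular} toric variety $X_0$ and $X_1$: one must ensure that the relevant classes — the basic discs $\beta_i$ and the wall classes $\sum_i a_i\beta_i$ — are realized inside the loci $X_0^\regular\cong X_1^\regular$ where the parallel transport is an honest symplectomorphism, so that their Maslov indices and symplectic areas are literally those of the toric model, and that the splitting $N\hookrightarrow\HH_2(X_1,L;\bZ)$ extracted from \cref{lemma:monotone} is compatible with the $\beta_i$. This is precisely what the normality (and Gorenstein-ness) of $X_0$, the monotonicity of $X_1$, and the preservation of $b_2$, together with \cref{lemma:monotone}, \cref{lemma:sergeytorictrick} and the disc computations of \cite{MR2609019,MR2879356,MR3548462}, are there to supply. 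Conceptually, the reason the two proportionality constants agree is that $\ell_i(u)=\ell$ for all $i$ says that, after translating $u$ to the origin, $P$ is $\ell$ times a reflexive polytope, so $[\omega]$ on $X_1$ is proportional to $c_1(X_1)$ with exactly the constant that also governs the areas of the basic discs.
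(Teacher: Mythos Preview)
Your proof is correct and follows essentially the same route as the paper's: use the basic Maslov-index-$2$ discs $\beta_i$ with equal areas $\ell_i(u)=\ell$, produce a positive integer relation $\sum a_i v_i=0$ so that $\sum a_i\beta_i$ is a sphere class, read off the monotonicity constant $\ell/2$ from this class, and then decompose an arbitrary $\beta$ as a sphere plus a combination of discs. The paper's write-up is terser (it sets the normalization constant $c=1$ and does not explicitly invoke the splitting of \cref{lemma:monotone}, instead just writing $\beta=\beta_1+\sum b_i[D_i]$ with $\beta_1\in\HH_2(X_1;\bQ)$), but the logical skeleton is the same.
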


\begin{proof}
% Let~$D$ be a class of a holomorphic disc
% in~$\HH_2(X_1,\bZ)\subset \HH_2(X_1,L;\bZ)$.
% Its area $\int_D \omega$ is positive,
% so monotonicity implies positivity of Maslov index.
%% XXX: Four lines above are misplaced
Consider the (holomorphic) disks $D_i$ of Maslov index~$2$ (\cref{equation:boundary} and \cref{remark:second-betti})
enumerated by the vertices~$v_i$ of the polytope~$P^\circ$.
By \cite[Theorem 10.1]{MR2609019} or \cite[Theorem 8.1]{MR2282365}
they have symplectic area
\begin{equation}
  \int_{D_i} \omega = \ell_i(u) = \ell > 0.
\end{equation}
Non-negative linear combinations of the classes
of their boundaries $[\partial D_i] \in \HH_1(L,\bZ) = N$
generate a finite index sublattice.
Hence any element $\beta \in \HH_2(X_1,L;\bQ)$
can be presented as $\beta = \beta_1 + \sum b_i [D_i]$,
a linear combination of $[D_i]$
and some $\beta_1 \in \HH_2(X_1,\bQ)$.
Moreover there exists $a_i\geq 0$ such that $\sum a_i > 0$
such that $\sum a_i [D_i] \in \HH_2(X_1,\bZ)$.

Then we have that
\begin{equation}
  \int_\beta \omega = \left( \sum b_i \right) \ell + \int_{\beta_1} \omega,
\end{equation}
and
\begin{equation}
  \int_{\sum a_i [D_i]} \omega = \left( \sum a_i \right) \ell > 0
\end{equation}
where~$\mu(\sum a_i [D_i]) = 2\sum a_i > 0$. Monotonicity of $(X_1,\omega)$ implies that
\begin{equation}
  \int_{\beta_1} \omega
  =\mu(\beta_1)  \frac{\int_{\sum a_i [D_i]} \omega}{\mu(\sum a_i [D_i])}
  =\mu(\beta_1)/2 \cdot \ell,
\end{equation}
hence
$\int_\beta \omega = (\ell/2) \mu(\beta)$.
\end{proof}

Now we apply \cref{proposition:monotonegenerali}
to produce a Lagrangian torus in~$\odd$, using Manon's toric degeneration and the integrable system described in \cref{definition:the-integrable-system}.
% If $(\graph,c)$ be a maximally connected trivalent graph with one colored vertex,
% then the central fiber is terminal by \cref{theorem:terminal-criterion},
% and in \cref{lemma:polytope-description} we have obtained a description
% of the lattice points of the fan polytope~$P_\graph^\circ$,
% as the origin and the vertices~$p(v,s)$.
% This description allows us to prove the following.

\begin{proposition}
  \label{proposition:monotone-torus}
  %% Assumptions not needed
  % Let~$(\graph,c)$ be a maximally connected trivalent graph with one colored vertex.
  % Assuming \cref{conjecture:small-resolution},
  The Lagrangian torus~$L:=\Phi_{\graph,c}^{-1}(0)\subset\odd$ is monotone.
\end{proposition}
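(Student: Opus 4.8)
The plan is to obtain this as an immediate application of \cref{proposition:monotonegenerali}. Take $X_1=\odd$, take $X_0=\gptoric{\graph'}{c}{M}\cong\manontoric{\graph}$ to be the Gorenstein Fano toric variety produced by \cref{theorem:manon} and \cref{proposition:isomorphism-of-toric-varieties}, take the Nishinou--Nohara--Ueda integrable system $\Phi_{\graph,c}$ of \cref{definition:the-integrable-system} in the role of $\Phi_1$, and take the interior point $u=0$ of the moment polytope $P=P_{\graph',c}=\Delta_{\graph,c}$. Under these identifications $L=\Phi_{\graph,c}^{-1}(0)$ is precisely the Lagrangian torus $L_1(0)$ obtained by symplectic parallel transport, cf.~\eqref{eqn:lagtori}, so the whole statement reduces to checking that the hypotheses of \cref{proposition:monotonegenerali}, and through it those of \cref{lemma:monotone}, hold in this situation.

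I would verify these in turn. First, $\odd$ is smooth, projective, simply connected and Fano of dimension $3g-3$; since $\Pic(\odd)=\bZ\Theta$ by Drézet--Narasimhan \cite[Théorème~B]{MR0999313}, we have $b_2(\odd)=1$, so every Kähler class on $\odd$ --- in particular the Atiyah--Bott--Goldman--Narasimhan one --- is a positive multiple of $c_1(\odd)$, whence $(\odd,\omega)$ is monotone. Second, $u=0$ is the unique interior lattice point of the reflexive polytope $P_{\graph',c}$, and reading off the defining inequalities~\eqref{equation:half-spaces-general-i} every facet of $P_{\graph',c}$ is cut out by $\langle v_i,u\rangle\geq-1$; hence in the notation of~\eqref{equation:ell} all $\tau_i=-1$, so $\ell_i(0)=1$ for every $i$ --- a common positive value $\ell$, exactly as \cref{proposition:monotonegenerali} requires. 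Third, for \cref{lemma:monotone} one needs the vertices of $P_{\graph',c}^{\circ}$ to generate the cocharacter lattice $N_{\graph'}$: by \cref{lemma:polytope-description} these are the points $p(v,s)$ of~\eqref{equation:vertices}, and for each fixed $v$ the vectors $p(v,s)$ with $s\in C_{v,c}$ already span $N_v$ --- their pairwise sums yield $2x_i,2x_j,2x_k$, and one of them equals $\pm(x_i+x_j+x_k)$ --- so their union over all $v$ generates $N_{\graph'}=\operatorname{im}\bigl(\bigoplus_{v}N_v\to\widetilde{N}_{\graph'}\bigr)$ by its very definition in \cref{subsection:graph-potentials}.

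The one hypothesis of \cref{proposition:monotonegenerali} that is not pure bookkeeping is the requirement that Manon's degeneration preserve the second Betti number. Here one uses once more that $b_2(\odd)=1$ and that Manon's construction is a degeneration of the polarized pair $(\odd,\Theta)$, which forces the needed compatibility; see also \cref{remark:second-betti}. Once all of this is assembled, \cref{proposition:monotonegenerali} yields that $L=L_1(0)=\Phi_{\graph,c}^{-1}(0)$ is monotone. I expect this second-Betti-number input to be the only genuinely delicate step --- everything else is a matter of matching $\odd$ and $\manontoric{\graph}$ to the general framework, and of exploiting the reflexivity of Manon's polytope to pin down the common symplectic area $\ell=1$ of the Maslov-index-two discs.
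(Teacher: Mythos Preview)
Your proposal is correct and follows essentially the same route as the paper: both arguments reduce to \cref{proposition:monotonegenerali} by checking that the defining inequalities of $P_{\graph,c}$ all satisfy $\ell_{v,s}(0)=1$ and that the points $p(v,s)$ span $N_{\graph}$. You are in fact somewhat more explicit than the paper in verifying the ambient hypotheses (simple connectedness and monotonicity of $\odd$ via $\Pic(\odd)=\bZ\Theta$, and the preservation of $b_2$ under Manon's degeneration), which the paper leaves implicit or defers to \cref{remark:second-betti}; conversely, the paper notes via \cref{theorem:terminal-criterion} that the $p(v,s)$ (or at least a subset of them) are actually vertices of $P_{\graph,c}^{\circ}$, which is what \cref{lemma:monotone} formally requires.
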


\begin{proof}
  For each vertex~$v$ of the graph~$(\graph,c)$
  and $s\in C_{v,c}$~\eqref{equation:colored-cycles},
  consider the equations~$\ell_{v,s}$ from~\eqref{equation:colored-inequalities}
  used in \cref{subsection:manon}
  to describe the polytope~$P_{\graph,c}$ and the polar dual~$P^{\circ}_{\graph,c}$.
  Rewrite~$\ell_{v,s}$ as in~\eqref{equation:ell}
  \begin{equation}
    \langle p(v,s),u \rangle - \tau_{v,s}
  \end{equation}
  where~$p(v,s)$ is as in~\eqref{equation:vertices}
  and $\tau_{v,s}=-1$.
  Setting~$u=0$ we obtain that~$\ell_{v,s}(0)=1$.
  Observe that the lattice points $p(v,s)$
  of the polytope~$P_{\graph,c}^\circ$,
  are non-zero,
  that the vertices form a subset by \cref{theorem:terminal-criterion},
  and furthermore span the lattice $N_{\graph}$.
  The statement then follows directly from \cref{proposition:monotonegenerali}.
\end{proof}

There is a combinatorial non-abelian Torelli theorem for these toric degenerations,
see \cite{gp-torelli}.
This allows one to reconstruct the trivalent graph from the degeneration,
and thus we can obtain the following corollary.

\begin{corollary}
  \label{corollary:non-Hamiltonian-isotopic}
  Let~$\graph$ and~$\graph'$ be different trivalent graphs of genus~$g$.
  Let~$L_\graph$ and~$L_{\graph'}$ be the monotone Lagrangian tori in~$\odd$ constructed above.
  Then they are not Hamiltonian isotopic to each other.
\end{corollary}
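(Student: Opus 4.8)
The plan is to combine two inputs: \cref{theorem:graphisdisc}, which (via the magic lemma \cref{theorem:magic-lemma}) identifies the Floer potential of $L_\graph$ with the graph potential $\widetilde{W}_{\graph,c}$, whose Newton polytope is Manon's fan polytope $P_{\graph,c}^\circ$; and the general principle that a Hamiltonian isotopy between monotone Lagrangian tori identifies their Floer potentials up to an automorphism of $\HH_1$. Feeding the resulting equivalence of polytopes into the combinatorial non-abelian Torelli theorem \cite{gp-torelli}, which recovers the trivalent graph from $P_\graph$, yields the claim by contradiction.

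First I would record the invariance principle. For a monotone Lagrangian torus $L$ in a monotone symplectic manifold, the Floer potential $m_0(L)$ is a Laurent polynomial on $\Hom(\HH_1(L;\bZ),\bC^\times)$; a Hamiltonian isotopy carrying $L$ to another torus $L'$ induces an isomorphism of the associated Fukaya $A_\infty$-algebras, hence identifies $m_0(L)$ with $m_0(L')$ under the induced lattice isomorphism $\HH_1(L;\bZ)\xrightarrow{\sim}\HH_1(L';\bZ)$ — i.e.\ up to a monomial change of coordinates realising an element of $\GL_{\dim L}(\bZ)$, a rescaling of coefficients, and sign changes from the brane data, all of which fix the Newton polytope (standard; see e.g.\ \cite{1801.06921}). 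So $\Newton\left(m_0(L)\right)\subset\HH_1(L;\bR)$ is well defined up to $\GL_{\dim L}(\bZ)$ and is a Hamiltonian-isotopy invariant of $L$.

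Next I would compute this invariant for $L_\graph$. Fix a coloring $c$ with one colored vertex (the Hamiltonian-isotopy type of $L_\graph$ depends on $c$ only through its parity by \cite[Corollary~2.9]{gp-tqft}, so this is no loss). By \cref{theorem:graphisdisc}, $m_0(L_\graph)=\widetilde{W}_{\graph,c}$, whose monomials are exactly the lattice points $p(v,s)$ of~\eqref{equation:vertices} with $v\in V$, $s\in C_{v,c}$; hence $\Newton\left(m_0(L_\graph)\right)=P_{\graph,c}^\circ$ by \cref{lemma:polytope-description}, the fan polytope of Manon's toric degeneration $\manontoric{\graph}\cong\gptoric{\graph}{c}{M}$ of $\odd$. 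Therefore, if $L_\graph$ and $L_{\graph'}$ were Hamiltonian isotopic, the polytopes $P_{\graph,c}^\circ$ and $P_{\graph',c'}^\circ$ would be lattice-equivalent, so Manon's polytopes $P_\graph$ and $P_{\graph'}$ would be lattice-equivalent and $\manontoric{\graph}\cong\manontoric{\graph'}$. The combinatorial non-abelian Torelli theorem \cite{gp-torelli} then forces $\graph\cong\graph'$, contradicting the hypothesis, and the corollary follows.

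The hard part will be the invariance principle of the second paragraph: one must pin down precisely in which sense the Floer potential of a monotone Lagrangian torus — and hence its Newton polytope — is invariant under Hamiltonian isotopy, isolating the exact reparametrisation ambiguity ($\GL_{\dim L}(\bZ)$ on $\HH_1(L;\bZ)$, coefficient rescalings, brane-dependent signs) and citing the corresponding invariance of the Fukaya $A_\infty$-structure. The remaining steps are then bookkeeping on top of \cref{theorem:graphisdisc}, \cref{lemma:polytope-description}, and the external graph-rigidity statement \cite{gp-torelli}.
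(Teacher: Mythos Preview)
Your proposal is correct and follows essentially the same approach as the paper: the paper gives no explicit proof, only the sentence preceding the corollary pointing to the combinatorial non-abelian Torelli theorem \cite{gp-torelli}, and you have unpacked precisely the argument this sentence encodes (Hamiltonian-isotopy invariance of the Newton polytope of $m_0(L)$, identification of that polytope with $P_{\graph,c}^\circ$ via \cref{theorem:graphisdisc}/\cref{theorem:gennnu}, and reconstruction of $\graph$ from the polytope). One minor remark: in the paper's logical order the corollary is stated before \cref{theorem:graphisdisc}, so strictly speaking you are making a forward reference; this is harmless since nothing in \S\ref{section:potentials} depends on the corollary, but you might note that only the Newton-polytope statement of \cref{theorem:gennnu} is needed, not the full identification of coefficients.
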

%This is similar to the construction of many non-Hamiltonian isotopic
%monotone Lagrangian tori in del Pezzo surfaces \cite{MR3509972,MR3663599}.

\begin{remark}
  \label{remark:even-symplectic}
  In the case of~$\even$ symplectic transport gives
  a symplectomorphism from
  the non-singular locus of the toric fiber
  to an open subset in non-singular locus of~$\even$.
  The corresponding Nishinou--Nohara--Ueda integrable system
  seems to be quite different from Goldman--Jeffrey--Weitsman
  integrable system from \cite{MR1204322}, despite sharing the base~$P_\graph$.
  In the former case the preimage of the interior of the polytope
  is the cotangent bundle of the torus and in the latter it is singular.
\end{remark}

\paragraph{On the nature of mutations}
We now briefly speculate on the relationship between the different Lagrangian tori $L_\graph$,
constructed for different choices of trivalent graphs,
i.e.~what role of elementary transformations from \cite[\S2.2]{gp-tqft} play.
In the context of
Batyrev's small toric degeneration approach to Fano mirror symmetry
cluster mutations in the sense of Berenstein--Fomin--Zelevinsky \cite{MR2110627}
and Fock--Goncharov \cite{MR2567745}
were rediscovered
independently
\begin{itemize}
  \item for spherical varieties using representation theory, by Rusinko \cite{MR2421321},
  \item for del Pezzo surfaces and Fano threefolds using deformation theory, by Galkin \cite{Galkin-phd}.
\end{itemize}
It was observed that the Laurent polynomials associated by a mirror construction to
different toric degenerations of a Fano manifold
% (e.g. of the projective plane and other del Pezzo surfaces,
% or of the Grassmannian)
are related to each other by volume-preserving birational changes of coordinates.

An algebraic theory of mutations of mirror potentials was constructed
in joint works of Galkin with Usnich and Cruz Morales \cite{MR3033547,Galkin-Usnich},
and the respective excessive Laurent phenomenon was proved in this context:
\begin{itemize}
  \item in \cite{Galkin-Usnich} from a more algebro-geometric point of view
    (similar to that of Gross--Hacking--Keel--(Kontsevich) \cite{MR3350154,MR3758151}),
  \item and then in \cite{MR3033547}
    using the relation to cluster algebras and their upper bounds.
\end{itemize}
In this way a framework free from (commutative) algebraic geometry was constructed,
with a view towards quantization.
Amongst others, the notions of exchange collections, upper bounds,
and mutation rules for them are introduced,
and the Laurent phenomenon was proved algebraically and geometrically.

Ten equivalence classes of exchange collections in dimension~2
were constructed in \cite{Galkin-Usnich,MR3033547}.
It was conjectured that each respective upper bound written in all possible seeds
is generated by a single Laurent polynomial with positive integer coefficients,
which is equal to a Floer potential for a monotone Lagrangian torus
in an integrable system associated by the Nishinou--Nohara--Ueda construction
to the respective toric degeneration of a smooth del Pezzo surface.
Moreover it was conjectured that these birational transformations coincide
with Lagrangian wall-crossing formulae, and that Floer potentials
for all embedded monotone Lagrangian tori are equal to one of the constructed functions.

Most of these were settled positively by Vianna, Galkin--Mikhalkin,
Pascaleff--Tonkonog and Ekholm--Rizell--Tonkonog
\cite{MR3509972,Galkin-Mikhalkin-2,1711.03209,1806.03722,Galkin-Mikhalkin}.
However in higher dimensions
(apart from the holomorphic symplectic case) our knowledge is much less complete.
Already at the most basic level:
for mirror symmetry of del Pezzo surfaces Cruz Morles, Galkin and Usnich give a recursive procedure
to construct infinitely many different Laurent polynomials
from just one was constructed,
but we do not know any general higher-dimensional recursive rule of generalized ``mutations''
that would enjoy a Laurent phenomenon
and the same time produce enough potentials.
% ACGK for 3-folds, comb. mutations and computer experiments

Pascaleff--Tonkonog \cite{1711.03209}
introduced the idea that
the birational mutation of potentials of Cruz Moralez--Galkin \cite{MR3033547}
(which itself is a geometric lift in the sense of Berenstein--Zelevinsky
of a piecewise-linear homeomorphism of polytopes)
can be lifted further to a \emph{Lagrangian mutation},
i.e.~a symplectic surgery
between the respective Lagrangian tori
with the help of auxiliary Lagrangian disc,
such that class of its boundary belongs to the exchange collection.

Dimitroglou--Eckholm--Tonkonog \cite{1806.03722}
provide further insight into mutations of two-dimensional Lagrangian tori
by interpreting it as two deformations of an immersed Lagrangian sphere.
For the latter they propose
 one unifying Floer-like potential
  with values in a (non-commutative)
   multiplicative preprojective algebra,
 out of which
  the Floer potentials for Lagrangian tori
   are obtained by specialization.
Finally, Hong--Kim--Lau \cite{1805.11738} also study study Floer potentials
for immersed Lagrangian subvarieties,
which are isomorphic to a product of an embedded torus with an immersed two-sphere,
from the perspective of Abouzaid--Auroux--Katzarkov \cite{MR3502098}, % Abouzaid--Auroux--Katzarkov
based on Strominger--Yau--Zaslow \cite{MR1429831}.  % Strominger--Yau--Zaslow

This leads us to the following observation.
\begin{remark}
  \label{remark:speculation}
  One naturally expects that the rational changes of coordinates
  corresponding to elementary transformations of graph potentials
  are similarly associated with a (new kind of) Lagrangian mutations
  between the monotone Lagrangian tori $L_\graph$ in $\odd$.
\end{remark}

%S., [21 Feb 2022 at 17:53:55]:
%Well, classically it is
%
%x x' = M + M'
%
%M and M' are monomials of n-1 other variables,
%these n-1 with either x or x' make a collection of n variables in two neighbouring seeds
%
%On the left hand side you have a monomial of degree 1,1, on the right hand side a binomial.
%If you move all them to one side you have a trinomial which equals to zero.
%And e.g. Plücker relations for Grassmannian Gr(2,n) are trinomials, and if you do inverse operation and move the negative one to the left you have relation of this type.
%
%In contrast our relations are of form
%
%one binomial equals another binomial,
%
%there is one more monomial - totally it is 4-nomial, not 3-nomial.

One new and peculiar feature
is that elementary transformations of graph potentials naturally
equate a sum of two monomials % calling this a binomial is linguistically correct, but I don't recall seeing this (often)
to a sum of two monomials
as in \cite[Equation (43)]{gp-tqft}.
This is in contrast to (various generalizations of) cluster algebras
where a single mutation involves
an equality between a monomial and a sum of two monomials.
A limiting procedure, discussed in \cite[Remark~4.19]{gp-tqft}
specializes these new transformations
to the transformations of Nohara--Ueda \cite{MR3211821,MR3391735,MR4118149}
between Floer potentials of Euclidean polygon spaces.
% Yuichi Nohara, Kazushi Ueda
% [1111.4809]  Toric degenerations of integrable systems on Grassmannians and polygon spaces
% [1405.1058]  Goldman systems and bending systems
% [1711.04456] Potential functions on Grassmannians of planes and cluster transformations
In \cite{MR4118149} they show how
the specialized transformations produce a covering for
a total space of Rietsch's Lie-theoretic mirror potential from \cite{MR4034891}
for the Grassmannian of planes,
% Konstanze Rietsch and Lauren Williams,
% [1507.07817] Cluster duality and mirror symmetry for Grassmannians,
which generally is an open subspace in a Langlands dual homogeneous variety.
In fact, Nohara--Ueda \cite{MR4118149} show that formulae
for mutations of their potentials are
\emph{equivalent}
to Plücker relations of the Grassmannian,
so in some sense \cite[Equation (43)]{gp-tqft} is related to Plücker relations
as the group $SU(2)$ is related to Lie algebra $su(2)$.
It would be interesting to understand this behavior better.

\section{Disks of Maslov index two and disk potentials}
\label{section:potentials}
We will now introduce the disk potential,
for a Lagrangian torus inside a symplectic manifold~$(X,\omega)$. % and recall some of the necessary ingredients.
 %\begin{example}
%\label{example:maslov}
%Let $D$ be the unit disk $D = \{|z|<1\} \subset \bC$.
%For $k=0,1,2,\dots$ the map $w = f(z) = z^k$ from $(D,\partial D)$ to $(\bC,\{|w|=1\})$
%has Maslov index $2k$, and $f(z) = \bar{z}^k$ has Maslov index $-2k$.
%
%Similarly, for any point $u_i\in\bR$ and $\phi_i\in\bR/(2\pi)$
%the map
%\begin{equation}
%  z \mapsto (z^k \cdot \exp(u_1+\ii\phi_1),\exp(u_2+\ii\phi_2),\dots,\exp(u_N+\ii\phi_N))
%  = (w_1,w_2,\dots,w_N)
%\end{equation}
%from $(D,\partial D)$ to $(M=\bC\times(\bC^*)^{N-1}, L=\{|w_i|=\exp u_i\})$
%has Maslov index $2k$ (for the symplectic structure on the target equivalent to the product)
%and sends the $k$ points of the form $z = \exp 2\pi\ii j/k \in \partial D$ to
%the point
%$P = (\exp(u_1+\ii\phi_1)\dots,\exp(u_N+\ii\phi_N)) \in (\bC^*)^N$.
%
%More generally, let~$X$ be a normal toric variety. For any point $P \in (\bC^*)^N \subset X$ on the open orbit of~$X$
%and for any irreducible torus-invariant divisor $Z\subset X$,
%there is an open subvariety $X_Z \subset X$
%isomorphic to $\bC \times (\bC^*)^{N-1}$ such that restriction of $Z$ to $X_Z$
%is given by equation the $w_1=0$ in
%a system of coordinates $w_1 \in \bC, w_2, \dots, w_N \in \bC^*$ on $X_Z$.
%The previous example gives a disk of Maslov index $2$ on $X_Z \subset X$
%with boundary on the fiber $L = \Phi^{-1}\circ \Phi(P)$
%of the moment mapping $\Phi\colon X \to \Delta$.
%In this case the preimage of $P\in L$ is the unique point $z=1\in D$.
%\end{example}
The next ingredient
is the choice of an almost complex structure~$J$. We say that an almost complex structure~$J$
is \emph{$\omega$-tame}
if $\omega(v,J v)>0$ for all non-zero tangent vectors $v$.
%is positive-definite (Riemannian metric on $M$).
For a fixed $\omega$ the space of $\omega$-tame almost complex structures
is contractible, hence the Chern classes of $J$ are symplectic invariants.
In this situation the Maslov index of a $J$-holomorphic map $f$
equals twice its first Chern number.
In particular a $J$-holomorphic disk with boundary on $L$
that intersects an effective anti-canonical divisor transversally
in a unique point has Maslov index $2$.
% XXX - whe L is disjoint from Y and D is irreducible J-holomorphic,
% the intersection is non-negative.
More generally, one can show that the Maslov index controls
the dimension of the (virtual) fundamental cycle
on the (Kuranishi) moduli space of $J$-holomorphic maps
\begin{equation}
f\colon(C\ni P_1,\dots,P_a;\partial C\ni Q_1,\dots,Q_b) \to (M,L)
\end{equation}
from a Riemann surface $C$ with $a$ marked points
in its interior and $b$ marked points on the boundary $\partial C$.
% XXX: specify dimension formula
% \dim_\bR [\moduli_{g,n,b,m}(M,L;β)] = μ(β) + ...

\begin{definition}
  For a~$J$ fixed the \emph{disk/Floer potential} of~$L$,
  denoted~$m_0(L)$ (or~$\mathfrak{PO}$ or $W_L$\index{W@$W_L$})
  is the generating function
  for the $J$-holomorphic disks~$(D,\partial D,p)\subset(M,L,P)$ of Maslov index~2,
  whose boundary passes through a specified point~$P$ on~$L$
  (one can also fix an interior point and ask it to lie
  on a fixed effective anti-canonical divisor).
\end{definition}
In general, it is defined on
the moduli space of weakly bounded cochains as
\begin{equation}
m_0(L)(x) :=
\hspace{-2em}
\sum_{\substack{(D\supset\partial D\ni p)\subset (M\supset L\ni P) \\ \mu(D)=2}}
\hspace{-2em}\exp(\langle[\partial D],x\rangle) T^{\int_D \omega}
\end{equation}
where
\begin{itemize}
  \item $\int_{\ast}\omega\colon\HH_2(M,L;\bZ)\to\bR$ is the symplectic area;
  \item $\mu\colon\HH_2(M,L;\bZ)\to\bZ$ is the Maslov index;
  \item $x\in\HH^1(L,\Lambda_0)$ is a cohomology class valued in
  Novikov ring $\Lambda_0$;
  \item $\Lambda := \left\{ \sum_{i=0}^{+\infty} a_i T^{\lambda_i}
                 \mid a_i\in\bQ,\lambda_i\in\bR,
                 \lim_{i\to+\infty}\lambda_i=+\infty \right\}$
        and $\Lambda_0$ \index{L@$\Lambda_0$}
      is its subring of elements with non-negative valuation
\[
\mathfrak{v}\left( \sum a_i T^{\lambda_i} \right) := \min\left\{ \lambda_i\,|\, a_i\neq 0 \right\}
\in \bR
\]
\end{itemize}
The Novikov ring is used to deal with symplectic areas
$T^{\int_{\Sigma} \omega}$
when the class $[\omega]\in\HH^2(M,\bR)$ of the symplectic
 form is not an integer,
or when the surface $\Sigma$ is not closed.
However under assumptions of integrality and monotonicity,
%(which will be introduced and established later)
the generating functions can be rewritten as Taylor series.
For similar results on monotone toric manifolds
and the passage between critical points
in Archimedean and Novikov contexts,
see the work of Ritter \cite[Section~7.9]{MR3548462}
and Judd--Rietsch \cite{1911.04463}.

% Judd--Rietsch

For generic $\omega$-tame $J$
the potential gives a well-defined
element in the Novikov ring
thanks to the Gromov compactness theorem.

We now discuss several methods to compute the
disk potential for the moduli space $\odd$ with
respect to a monotone Lagrangian toric $L$ obtained
via symplectic parallel transport as in~\cref{definition:the-integrable-system}. We discuss the
general set-up of toric degenerations and analyse the
various special cases starting with the existence of
small resolutions that connect
to~\cref{conjecture:small-resolution} and then
the case of terminal singularities and ending
with the most
general case.
These results improve upon the results of Nishinou--Nohara--Ueda \cite{MR2879356}.

\paragraph{The special case of small resolutions}
The symplectic parallel transport for integrable systems
is used to show the following \cite[Theorem~1]{MR2879356}.
\begin{theorem}[Nishinou--Nohara--Ueda]
\label{theorem:nnu}
Consider a toric degeneration~$\mathfrak{X}\to B$
as in \cref{subsection:integrable-system} and
let $\Phi:=\Phi_1: X_1^{\regular}\rightarrow \mathbb{R}^n$
 be the integrable system obtained as in \cref{subsection:integrable-system}.
Assume further that~$X_0$ is Fano, and admits a
small resolution of singularities.

Then for any~$u\in P^{\mathrm{\interior}}\subseteq\bR^n$
in the interior of the moment polytope,
the Floer potential of the Lagrangian torus fiber~$L(u):=\Phi^{-1}(u)$
equals the disk potential and
is given by\index{P@$\mathfrak{PO}$} \begin{equation}
m_0(L)(x) = \sum_{i=1}^m \exp(\langle v_i,x \rangle) T^{\ell_i(u)},
\end{equation}
where~$v_1,\dots, v_m$ are the generators of the polar dual~$P^\circ$
and~$\ell_i$'s are the equations defining
the facets of the moment polytope~$P$
and valuations of $x$ belong to the moment polytope $P$.
\end{theorem}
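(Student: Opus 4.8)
The plan is to prove \cref{theorem:nnu} by reducing the computation of the Floer potential to the essentially combinatorial statement that holomorphic Maslov index two discs with boundary on $L(u)$ are in bijection with the facets of $P$, each contributing a term $\exp(\langle v_i,x\rangle)T^{\ell_i(u)}$. First I would recall the toric picture: on a smooth projective toric Fano $X_0'$ (which under the small resolution hypothesis sits over $X_0$), the Cho--Oh computation of the Floer potential of a moment-map torus fiber over an interior point $u$ gives exactly $\sum_i \exp(\langle v_i,x\rangle)T^{\ell_i(u)}$, one term per facet with boundary class $v_i\in N$ and area $\ell_i(u)$. The strategy of Nishinou--Nohara--Ueda is to transport this computation to $X_1$ along the degeneration using the symplectomorphism $\widetilde I\colon X_0^\regular\to X_1^\regular$ from symplectic parallel transport, and then to argue that no \emph{additional} discs appear and no discs are lost in the limit.

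The heart of the argument proceeds in three steps. First, establish a correspondence between holomorphic discs in $X_1$ with boundary on $L(u):=\Phi_1^{-1}(u)$ and holomorphic discs in the resolution of $X_0$ with boundary on the corresponding toric fiber $L_0(u)$; since $u$ is an interior point, $L(u)$ lies in the open set $X_1^\regular$ identified with $X_0^\regular$, and one uses the fact that the small resolution is an isomorphism away from a codimension $\geq 2$ locus, so the disc counts with their areas and boundary classes are unchanged. Second, control areas: by \cref{lemma:monotone} and the accompanying discussion $\HH_2(X_1,L;\bZ)\cong N\oplus\HH_2(X_1,\bZ)$, and a class represented by a Maslov index two disc has trivial $\HH_2(X_1,\bZ)$-component (here one uses that the degeneration preserves $b_2$ and that $X_1$ is Fano, so sphere bubbles have Maslov index $\geq 2$ and cannot co-occur with a Maslov two disc in a stable limit); hence the area is forced to be $\ell_i(u)$ for the facet class $v_i=[\partial D]$, which is the content of \cite[Theorem~10.1]{MR2609019}. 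Third, a transversality/compactness argument: for generic $\omega$-tame $J$ the moduli space of Maslov two discs is a compact zero-dimensional manifold after fixing a boundary point, no disc bubbling or sphere bubbling occurs by monotonicity and dimension count, and the signed count of discs in class $v_i$ equals $1$ (matching the toric model). Summing over $i$ gives the stated formula, and the in-text remark that valuations of $x$ lie in $P$ just records that the potential is convergent precisely on that polytope.

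I expect the main obstacle to be the \emph{absence of lost or gained discs} step: one must rule out holomorphic Maslov index two discs in $X_1$ that do not arise from the toric model, and conversely ensure every toric disc deforms to an honest holomorphic disc in $X_1$ (not just a broken configuration). The small resolution hypothesis is exactly what makes this tractable---it lets one replace $X_0$ by a smooth toric Fano whose disc theory is completely understood, and the exceptional locus being codimension $\geq 2$ means it is disjoint from the generic interior torus fiber and from the images of Maslov two discs for a generic choice of data. The technical work is a Gromov-compactness argument showing that a sequence of Maslov two $J_t$-holomorphic discs in $X_t$ cannot limit to a configuration straddling the singular locus of the total space $\mathfrak X$, which by hypothesis lies in the singular locus of $X_0$; here I would invoke the construction of \cite{MR1769452} making the parallel transport real-analytic, together with the fact (used already in the proof of \cref{lemma:sergeytorictrick}) that the relevant discs are supported in the preimage of the interior of $P$. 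Everything else is bookkeeping: matching boundary homology classes to the rays $v_i$, reading off areas from the affine functions $\ell_i$, and transporting the generating function through the symplectomorphism $\widetilde I$.
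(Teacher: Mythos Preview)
The paper does not actually give its own proof of \cref{theorem:nnu}: it is stated as \cite[Theorem~1]{MR2879356} and attributed to Nishinou--Nohara--Ueda, with only the one-sentence gloss that the small resolution hypothesis forces contributing discs to be transversal to toric strata. So there is no in-paper argument to compare your proposal against directly.

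That said, your sketch is broadly in the spirit of the original Nishinou--Nohara--Ueda argument, and it is worth contrasting it with what the paper \emph{does} prove, namely the generalization \cref{theorem:gennnu} (via \cref{proposition:magiclemmawithsalt} and \cref{lem:pointcount}). Your approach passes to a small resolution $X_0'$, invokes Cho--Oh on the smooth toric model, and then argues by Gromov compactness that no discs are gained or lost under parallel transport. The paper's approach for the generalization is rather different and more algebraic: it constructs the basic discs $C_i$ explicitly in affine charts of $X_0$, shows their classes generate $\pi_2(X_0,L_0)$, and then for an arbitrary irreducible Maslov-two curve $C$ pulls back to a $\bQ$-factorialization $r\colon X_0'\to X_0$ and computes the intersection numbers $p_i = C'\cdot D_i'$ with the toric boundary components. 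Since $\sum p_i = 1$ and the $p_i$ are nonnegative, one gets $[C] = \sum p_i [C_i]$ as a convex combination, forcing $[\partial C]$ into the fan polytope; in the terminal case (\cref{lem:pointcount}) the discrepancy inequality $a_j>0$ rules out any contribution from exceptional divisors, so exactly one $p_i$ equals $1$ and $C$ lies in a single affine chart. This bypasses both the small-resolution hypothesis and the delicate Gromov-compactness step you flag as the main obstacle, trading it for a cleaner intersection-theoretic bound. Your proposal would recover the cited theorem but not the paper's strengthening.
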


In particular it says that
if the degenerating toric Fano variety
admits a small resolution of singularities,
then the holomorphic disks with Lagrangian boundary
that contribute to the Floer potential of the smoothing
are transversal to toric strata.

\iffalse
In contrast, for degenerations with transversal
$\mathrm{A}_1$
 singularities there  an additional contribution
  from a disk that intersects it,
in our situation this can be observed for e.g.~on
graphs with loops.
\fi

\begin{remark}
  By~\cref{conjecture:small-resolution},
  it is expected that we can always find a graph $\gamma$,
  such that
  the corresponding toric degeneration $\manontoric{\graph,c}$ of $\odd$ has a small resolution.

  Given a small resolution \cref{theorem:nnu}
  can be applied to relate graph potentials
  to the Floer potential of~$\odd$.
\end{remark}

\paragraph{A generalization of Nishinou--Nohara--Ueda without small resolution}
Lacking a proof of \cref{conjecture:small-resolution}
we will rather generalise \cref{theorem:nnu}
to \cref{theorem:gennnu}

\paragraph{Some curves in~$\pi_2(X_0,L_0)$}

Before we discuss~\cref{theorem:gennnu},
let us construct a system of curves in $\pi_2(X_0,L_0)$.
We will later show that these curves form a
generating set for $\pi_2(X_0,L_0)$.
We denote~$L_0:=L_0(u)$.

For each
irreducible component $D_i$ of the torus-invariant
divisor $D$ on $X_0$
consider the complement of $X$ in the union of all
other irreducible components of $D$,
this is an affine variety isomorphic to
$\mathbb{C}\times (\mathbb{C}^*)^{n-1}$ with
coordinates $x_1;x_2,\ldots,x_n$
and $L_0(u)$ is given by~$|x_k|=u_k$,
where~$u_k$ are the components of~$u$.

For any point $P$ in $L_0(u)$ with coordinates
$z_1,\ldots,z_n$
there is a disk $C_i = C_i(u,P)$ given by a system
of $n-1$ equalities $x_k = P_k$, $k=2,\ldots,n-1$,
and one inequality $|x_1| \leq u_1$. Also the
fibers of $c$ over $c(D_i^o)$ are
$n-1$-dimensional tori, where $c$ is the map in~\eqref{eqn:totalintsys}
and $D_i^0=D_i\backslash \cup_{j\neq i}D_i$. There is
a map from generic $n$-dimensional
torus $L_0=L_0(u)$ to these tori that collapses
one direction corresponding to the boundary
of the disk $C_i$: in the local coordinates from above
just vary $u_1$ to $0$.

So for each $i$ the homotopy class
$[C_i] \in \pi_2(X_0,L_0)$ has the boundary
\begin{equation}\label{equation:boundary}
\partial[C_i] = [\partial C_i] \in \pi_1(L_0) = N
\end{equation}
%(the lattice where the fan lives, dual to monomials $M$)
and under the last identification
$\partial[C_i] = \rho_i$, where $\rho_i \in N$
is the primitive ray corresponding to the divisor $D_i$. 
We will use these curve classes in~\cref{proposition:magiclemmawithsalt}
which is a key step in the proof of the following main result. 
% We have the following proposition that 

%Now~\cref{proposition:magiclemmawithsalt} in the set-up of Gorenstein
% toric degeneration gives the following theorem:
\begin{theorem}
  \label{theorem:gennnu}
  Let $X_1$ be a smooth projective Fano variety
  and let $\mathfrak{X} \rightarrow B$ be a degeneration
  of $X_1$ into a Gorenstein Fano toric variety $X_0$
  such that $X_{t}$ is smooth for $t\neq 0$.
  Moreover assume that the degeneration preserves the second Betti number.
  Let $L_t(u)$ be
  a monotone Lagrangian torus obtained by
  the symplectic parallel transport of
  a Lagrangian torus $L_0(u)$
  in $X_0^\regular$
  for some point $u\in P^{\interior}$.

  Then the Newton polytope of the disk potential~$m_0(L_0(u))$
  equals the fan polytope $P^\circ$ of the toric variety~$X_0$.
  Moreover the coefficients of $m_0(L_0(u))$ corresponding to the rays of $P^{\circ}$ are one.
\end{theorem}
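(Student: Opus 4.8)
The plan is to track the Maslov index~$2$ $J$\dash holomorphic disks on $(X_1,L:=L_1(u))$ that contribute to $m_0(L_0(u))$ through the degeneration $\mathfrak{X}\to B$, and to match their limits in $X_0$ with the explicit disks $C_i$ constructed above. By definition a monomial of $m_0(L_0(u))$ with nonzero coefficient records a class $\beta\in\pi_2(X_1,L)$ of Maslov index~$2$ supporting a nonempty moduli space, with exponent $\partial\beta\in\HH_1(L;\bZ)\cong N$ and coefficient the corresponding disk count; so the two assertions become: (i) every such $\partial\beta$ is a vertex of $P^\circ$, and (ii) each vertex $\rho_i$ of $P^\circ$ (which equals the primitive ray generator $v_i$, since $X_0$ is Gorenstein and hence $\tau_i=-1$) is attained with coefficient exactly~$1$. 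Note that, by the monotonicity hypothesis and \cref{proposition:monotonegenerali}, all facet distances $\ell_i(u)$ share a common value $\ell>0$.

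For (i), first I would apply Gromov compactness to the family $\mathfrak{X}\to B$ --- using that the total space carries a K\"ahler form away from the singular locus of $X_0$ and that $L_0(u)$ lies in $X_0^{\regular}$ --- to extract, from a sequence of Maslov~$2$ disks on $(X_t,L_t(u))$ as $t\to 0$, a stable disk on $(X_0,L_0(u))$. Monotonicity of $(X_1,\omega)$ with equal facet distances, together with the Fano condition on $X_0$, forces every non-constant component of the limit to have positive even Maslov index; since the total Maslov index is~$2$, the limit is a single non-constant disk $\bar u$ of Maslov index~$2$ (possibly with ghost components, which do not affect the boundary class). Because $L_0(u)$ is disjoint from the toric anticanonical boundary $\sum_i D_i=-K_{X_0}$, the Maslov index of $\bar u$ equals twice its intersection number with $\sum_i D_i$; as the boundary of $\bar u$ misses every $D_i$ and the image of $\bar u$ is contained in none of them, the geometric intersection multiplicities $k_i(\bar u)\ge 0$ sum to~$1$. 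Hence $\bar u$ meets a unique divisor $D_{i_0}$ transversally in one point and otherwise lies in the affine chart $X_0\setminus\bigcup_{j\ne i_0}D_j\cong\bC\times(\bC^\times)^{n-1}$, which is smooth. The elementary classification of holomorphic disks in $\bC\times(\bC^\times)^{n-1}$ with boundary on a product torus (a degree-one Blaschke product in the first coordinate, constants in the others) identifies $\bar u$ with the disk $C_{i_0}$, so $\partial\bar u=\rho_{i_0}$. This is where \cref{proposition:magiclemmawithsalt} enters: it records that the classes $[C_i]$ together with $\HH_2(X_0)$ generate $\pi_2(X_0,L_0)$, and combined with the hypothesis that the degeneration preserves $b_2$ it lets one transport the area/Maslov bookkeeping rigidly between $X_0$ and $X_1$, so that $\partial\beta=\rho_{i_0}$ as a class on $L$, and $\Newton(m_0(L_0(u)))\subseteq\operatorname{conv}\{\rho_i\}=P^\circ$.

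For (ii), I would note that each $C_i$ sits in $X_0^{\regular}$ inside a smooth toric chart, where the Cho--Oh computation shows it is Fredholm regular with rigid, unobstructed moduli and disk count~$1$; hence it persists under symplectic parallel transport to a regular Maslov~$2$ disk in $X_1$ with boundary class $\rho_i$ and area $\ell_i(u)$, so the coefficient of $\exp(\langle\rho_i,x\rangle)T^{\ell_i(u)}$ in $m_0(L_0(u))$ is at least~$1$. Conversely, by the analysis in (i) every Maslov~$2$ disk on $X_1$ with boundary class $\rho_i$ degenerates to the unique disk $C_i$ through the chosen boundary point, and regularity identifies the two moduli spaces, so the count is exactly~$1$. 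Since every vertex $\rho_i$ of $P^\circ$ is thereby attained, $\operatorname{conv}\{\rho_i\}=P^\circ\subseteq\Newton(m_0(L_0(u)))$, and combining with (i) yields $\Newton(m_0(L_0(u)))=P^\circ$ with all ray-coefficients equal to~$1$.

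The hard part will be the rigidity of the disk correspondence across the --- possibly non-$\bQ$\dash factorial, worse-than-ADE --- singularities of $X_0$: one must rule out that a sequence of disks on $X_1$ degenerates into the singular locus in a way that spoils the intersection count with $-K_{X_0}$, and one must control the gluing and deformation theory precisely enough to pin the vertex coefficients to~$1$. This is exactly the input that the small-resolution hypothesis of \cref{theorem:nnu} trivializes, and that in its absence is supplied by \cref{proposition:magiclemmawithsalt} together with the $b_2$\dash preservation hypothesis; the fact that $L_0(u)$ and the relevant $1$\dash skeleton data sit inside $X_0^{\regular}$ confines any interaction with the singular locus to interior points of the limit disk, where the single permitted intersection with $-K_{X_0}$ has already been accounted for.
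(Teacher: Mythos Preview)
Your argument in (i) overshoots: you conclude that every Maslov~$2$ limit disk has boundary class equal to some vertex $\rho_{i_0}$ of $P^\circ$, i.e.\ that the support of $m_0$ is contained in the set of rays. The theorem only asserts that the Newton polytope (the convex hull of the support) equals $P^\circ$; in the non-terminal case non-vertex lattice points of $P^\circ$ can carry nonzero coefficients a priori (this is exactly what Case~II of the proof of \cref{theorem:graphisdisc} has to deal with, at the bridge points $\pm e_b^\vee$). So if your argument were correct it would prove something strictly stronger than \cref{theorem:gennnu}, and in fact something false in general.

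The gap is the step ``the geometric intersection multiplicities $k_i(\bar u)\ge 0$ sum to~$1$, hence $\bar u$ meets a unique $D_{i_0}$ transversally''. This presupposes that each $k_i$ is a non-negative \emph{integer}. But the Gorenstein hypothesis only makes the sum $-K_{X_0}=\sum_i D_i$ Cartier; the individual toric Weil divisors $D_i$ need not be Cartier, so $C\cdot D_i$ need not be an integer when $C$ passes through the singular locus of $X_0$. Nothing in your compactness step prevents the limit disk from hitting a singular point. The paper's \cref{proposition:magiclemmawithsalt} is not, as you say, the statement that the $[C_i]$ generate $\pi_2(X_0,L_0)$; it is precisely the repair of this step. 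One passes to a small $\bQ$\dash factorialization $r\colon X_0'\to X_0$, lifts the curve to $C'$, and obtains well-defined non-negative \emph{rationals} $p_i=C'\cdot D_i'$ with $\sum_i p_i=1$; duality of the bases $[C_i'],[D_i']$ then gives $[C]=\sum_i p_i[C_i]$ and hence $[\partial C]=\sum_i p_i\rho_i\in P^\circ$. Only when $[\partial C]$ is a vertex does extremality force a single $p_i=1$ and recover your chart argument; that is the second clause of \cref{proposition:magiclemmawithsalt} and is what pins the ray coefficients to~$1$. In the terminal case your integrality intuition can be salvaged, but it still requires an argument: \cref{lem:pointcount} uses a toric resolution and the discrepancy inequality $a_j>0$ to exclude the possibility that $C'$ meets an exceptional divisor, which is what forces the disk away from the singular locus.
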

\begin{remark}
	We are removing the need for a small resolution but with an assumption
	that the second Betti number is preserved by the degeneration.
	This assumption is enough for our application as Manon's toric degeneration
	preserve the second Betti numbers.
\end{remark}

\begin{remark}
  \label{remark:second-betti}
The condition on the preservation of the second Betti number in \cref{theorem:gennnu}
can be substituted by the more general notion of $\mathbb{Q}\Gamma$-smoothing
  due to Galkin--Mikhalkin \cite{Galkin-Mikhalkin}.
We refer the reader to \cite[Theorem 1]{Galkin-Mikhalkin} for a precise result.
In particular, for us it is enough to assume that the canonical class $\mathrm{K}_{X_t}\in\HH^2(X_t,\mathbb{R})$
  vanishes on the kernel of the map $\HH_2(X_t,\mathbb{R})\rightarrow\HH_2(X_0, \mathbb{R})$.
These notions are only used in defining the Maslov index and monotonicity for singular symplectic spaces \cite[Section 1.2]{Galkin-Mikhalkin}.
The condition on the preservation of Betti numbers in the statement of \cref{theorem:magic-lemma}
  already holds in the setup of Nishinou--Nohara--Ueda \cite{MR2609019}. 
% FIXME: Explain why. I do not know a proof of this statement for dim > 3!
\end{remark}

Now consider the special case where the only non-zero lattice point of $P^\circ$ are the rays (i.e. $X_0$ is terminal c.f. \cref{sec:terminal}). Then \cref{theorem:gennnu} directly implies
\begin{corollary}\label{corollary-terminal}
%The later is also the Newton polytope of the following Laurent polynomial
  \begin{equation}
   m_0(L_0(u))=\sum_{i=1}^m \exp(\langle v_i,x \rangle) T^{\ell_i(u)},
 \end{equation}
  where~$v_i$ are the generators of the polar dual~$P^\circ$
 and~$\ell_i$'s are the equations defining
  the facets of the moment polytope~$P$
  and valuations of $x$ belong to the moment polytope $P$.
\end{corollary}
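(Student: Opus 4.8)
The plan is to read the statement off directly from \cref{theorem:gennnu}, so the proof will be short. First I would apply \cref{theorem:gennnu} to the degeneration $\mathfrak{X}\to B$ of the smooth projective Fano $X_1$ into the Gorenstein Fano toric variety $X_0$, using that Manon's degeneration preserves the second Betti number: this already tells us that the Newton polytope of $m_0(L_0(u))$ is the fan polytope $P^\circ$ of $X_0$ and that the coefficient of the monomial attached to each primitive ray generator $v_1,\dots,v_m$ of $P^\circ$ equals $1$.

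Next I would feed in the standing hypothesis of the corollary. Requiring that the only non-zero lattice points of $P^\circ$ be its ray generators is exactly the condition that $X_0$ have at worst terminal singularities, and combinatorially it says $P^\circ\cap N=\{0,v_1,\dots,v_m\}$ with the $v_i$ being precisely the vertices of $P^\circ$. The exponents appearing in $m_0(L_0(u))$ are boundary classes $[\partial D]\in\HH_1(L,\bZ)=N$ of Maslov index $2$ disks, and their convex hull is $P^\circ$ by the previous step; hence the support of $m_0(L_0(u))$ is contained in $P^\circ\cap N=\{0,v_1,\dots,v_m\}$, so
\begin{equation}
  m_0(L_0(u))(x)=c_0+\sum_{i=1}^m\exp(\langle v_i,x\rangle)\,T^{a_i}
\end{equation}
for real numbers $a_i$ and a possible constant term $c_0\in\Lambda_0$.

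To finish I would identify the exponents and discard $c_0$. The disk realizing the boundary class $v_i$ is the class $[C_i]$ constructed just before \cref{theorem:gennnu}, and its symplectic area is $\ell_i(u)$ by \cite[Theorem~10.1]{MR2609019} --- the same area computation that enters the proof of \cref{proposition:monotonegenerali} --- so $a_i=\ell_i(u)$, which together with the coefficient being $1$ yields the asserted term $\exp(\langle v_i,x\rangle)\,T^{\ell_i(u)}$. For the constant term: a Maslov index $2$ disk with homologically trivial boundary represents, under the splitting $\HH_2(X_1,L;\bZ)\cong N\oplus\HH_2(X_1;\bZ)$ of \cref{lemma:monotone}, a class pulled back from $\HH_2(X_1;\bZ)$ of Chern number one, and such classes contribute nothing to $m_0$ in the Nishinou--Nohara--Ueda framework --- consistently with \cref{theorem:nnu}, whose Floer potential already has no constant term. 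The clause that the valuations of $x$ lie in $P$ is inherited verbatim from \cref{theorem:nnu}. Since every ingredient is already available, there is no genuinely hard step; the only two points meriting a sentence of justification are the lattice-point count characterizing terminal Gorenstein toric singularities and the matching of the $T$-exponents $a_i$ with the facet equations $\ell_i$.
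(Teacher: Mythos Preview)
Your approach is exactly the paper's: the corollary is deduced straight from \cref{theorem:gennnu} under the terminality hypothesis, and the paper literally says only that \cref{theorem:gennnu} ``directly implies'' the formula. Your write-up supplies the details the paper omits (terminality $\Leftrightarrow$ $P^\circ\cap N=\{0,v_1,\dots,v_m\}$, and the identification of the $T$-exponents with $\ell_i(u)$ via the area computation used in \cref{proposition:monotonegenerali}), and these are all correct.

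Two small points. First, the corollary is stated in the generality of \cref{theorem:gennnu}, not specifically for Manon's degeneration; the preservation of the second Betti number is already a hypothesis of \cref{theorem:gennnu}, so you should not invoke Manon here. Second, your dismissal of a possible constant term is not really an argument: asserting that classes from $\HH_2(X_1;\bZ)$ ``contribute nothing in the Nishinou--Nohara--Ueda framework'' and appealing to the shape of the formula in \cref{theorem:nnu} is circular. The actual reason, already contained in the terminal-case proof of \cref{proposition:magiclemmawithsalt} via \cref{lem:pointcount}, is that every Maslov-index-two disk in $X_0$ lies in one of the affine charts $\bC\times(\bC^\times)^{n-1}$ and hence has boundary class exactly one of the $\rho_i\neq 0$; you should cite that instead. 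The paper glosses over this point too, so this is a clarification rather than a correction of your overall strategy.
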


We postpone the proof of \cref{theorem:gennnu} and
focus on applications of the theorem for $X=\odd$.
\Cref{corollary-terminal} combined with
Manon's construction of a toric degeneration
has the following important consequence in our setting
that computes the disk potential for $\odd$.
%The proof follows

\begin{theorem}
  \label{theorem:graphisdisc}
  Let $(\graph,c)$ be a trivalent graph with
  one colored vertex. Then the following are equal:
  \begin{itemize}
    \item the graph potential~${W}_{\graph,c}$,
    \item the disk/Floer potential of the Lagrangian
    torus~$L_{\graph,c} \subset \odd$,
  \end{itemize}
  where $L_{\graph, c} = \Phi_{\graph,c}^{-1}(0)$ is
  a fiber of the integrable system
  $\Phi_{\graph,c}\colon\odd^\regular \to P_{\graph,c}$
  associated with Manon's toric degeneration of
  $\odd$ to $\manontoric{\graph,c}$.
  Moreover the monotone torus $L_{\gamma,c}$ is optimal i.e.
  \begin{equation}
    T_{\mathrm{con}}(L_{\gamma,c})=8g-8=T_{\odd}.
  \end{equation}
\end{theorem}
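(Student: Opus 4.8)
The plan is to deduce the equality of potentials from \cref{theorem:gennnu}, and then to read off the conifold value from the combinatorics of the graph potential. By \cref{theorem:manon} and \cref{proposition:isomorphism-of-toric-varieties}, $\odd$ degenerates in a family $\mathfrak X\to B$ whose central fibre is the Gorenstein Fano toric variety $X_0=\gptoric{\graph'}{c}{M}$, with moment polytope $P_{\graph',c}$ and fan polytope $P^\circ_{\graph',c}$; this degeneration preserves the second Betti number, $\odd$ is a smooth Fano variety, and the fibre $L_{\graph,c}=\Phi_{\graph,c}^{-1}(0)$ over the origin is monotone by \cref{proposition:monotone-torus}. Hence \cref{theorem:gennnu} applies: the Newton polytope of $m_0(L_{\graph,c})$ equals $P^\circ_{\graph',c}$, which by \cref{lemma:polytope-description} is the convex hull of the points $p(v,s)$ (for $v\in V$, $s\in C_{v,c}$) --- precisely the exponents of the monomials of $W_{\graph,c}$ --- and $m_0(L_{\graph,c})$ has coefficient $1$ at every vertex of $P^\circ_{\graph',c}$. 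Since $\ell_{v,s}(0)=1$ for all $(v,s)$ (the computation in the proof of \cref{proposition:monotone-torus}), all the relevant Maslov-index-two discs have the same symplectic area, so after normalising the monotone Novikov parameter $m_0(L_{\graph,c})$ and $W_{\graph,c}$ have the same Newton polytope and the same coefficient $1$ at each of its vertices.

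It remains to match the coefficients at the lattice points of $P^\circ_{\graph',c}$ that are not vertices. By \cref{theorem:terminal-criterion} this set is non-empty exactly when $X_0$ fails to be terminal, a condition on the combinatorics of $\graph$; in particular it occurs whenever $\graph$ has a loop, in which case a loop at $v$ makes $W_{\graph,c}$ contain a vertex potential of the shape $\tfrac{x^2}{y}+2y+x^2y$, so the ``midpoint'' exponent occurs with multiplicity two, corresponding to a transversal $A_1$ singularity of $X_0$ along the associated toric stratum. This is exactly the regime in which \cref{theorem:nnu} fails and an extra holomorphic Maslov-two disc appears. I would control it using the auxiliary disc classes $C_i\in\pi_2(X_0,L_0)$ constructed before \cref{theorem:gennnu} together with \cref{proposition:magiclemmawithsalt}, via a local analysis near the $A_1$ stratum (locally $\bC^2/\{\pm1\}$ times a torus): the claim to be proved is that each such stratum contributes exactly two Maslov-two discs, of the common monotone area, in the class whose boundary is the corresponding midpoint vector in $N_\graph$, and no further discs. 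Establishing this count --- that the disc potential picks up precisely the coefficient predicted by the loop vertex potential and nothing else --- is the main obstacle; when $\graph$ has no loops it is vacuous and $m_0(L_{\graph,c})=W_{\graph,c}$ follows at once from \cref{corollary-terminal}.

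Granting $m_0(L_{\graph,c})=W_{\graph,c}$, optimality is purely combinatorial. Write $W_{\graph,c}=\sum_{v\in V}\sum_{s\in C_{v,c}}x^{p(v,s)}$ in multiplicative coordinates; all coefficients are positive, and for each vertex $v$ one has $\sum_{s\in C_{v,c}}p(v,s)=0$ (the four sign-vectors, all of the same parity of minus signs, cancel --- and the same computation works verbatim for loop vertices), so the multiset of exponents of $W_{\graph,c}$ has vanishing sum. Hence in logarithmic coordinates $m_0(L_{\graph,c})$ is a strictly convex positive sum of exponentials whose exponents span $N_\graph\otimes\bR$, and its unique critical point --- the conifold point --- is the origin, i.e.\ $(1,\dots,1)$ in $\bR^{3g-3}_{>0}$. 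Therefore $T_{\mathrm{con}}(L_{\graph,c})=W_{\graph,c}(1,\dots,1)=\sum_{v\in V}\#C_{v,c}=4(2g-2)=8g-8$. By Muñoz \cite{MR1695800} one has $T_{\odd}=8g-8$, so the inequality \eqref{equation:conifoldinequality} is an equality and $L_{\graph,c}$ is optimal.
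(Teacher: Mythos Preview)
Your reduction to \cref{theorem:gennnu} and the computation of $T_{\mathrm{con}}(L_{\graph,c})=8g-8$ are correct and match the paper. The gap is in your treatment of the non-terminal case. You assert that ``when $\graph$ has no loops it is vacuous and $m_0(L_{\graph,c})=W_{\graph,c}$ follows at once from \cref{corollary-terminal}'', but \cref{theorem:terminal-criterion} says $X_0$ is terminal iff $\graph$ has no \emph{separating edges}, not iff $\graph$ has no loops. A loop forces its third edge to be a bridge, but there are trivalent graphs with bridges and no loops (e.g.\ in genus $4$, glue two ``theta-with-a-tail'' pieces along a single edge). By \cref{lemma:grandcentral} the non-vertex lattice points of $P^\circ_{\graph,c}$ are exactly $\pm e_b^\vee$ for each bridge $b$; when $b$ is not adjacent to a loop the coefficient of $x_b^{\pm 1}$ in $W_{\graph,c}$ is $0$, not $2$, so your proposed local $A_1$ disc count (which you already flag as unfinished) would not even be the right target in general.

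The paper avoids any direct disc enumeration near singular strata by a bootstrap through periods. One first settles the case of a graph with no separating edges (Case~I), where terminality makes \cref{corollary-terminal} apply verbatim. For an arbitrary $\graph$ (Case~II), one observes that the classical periods of $m_0(L_{\graph,c})$ are independent of $\graph$ (they compute quantum periods of the fixed variety $\odd$), and the classical periods of $W_{\graph,c}$ are independent of $\graph$ by \cref{theorem:graphtqft}; hence all periods of $Z_{\graph,c}:=m_0(L_{\graph,c})-W_{\graph,c}$ vanish. Since \cref{theorem:gennnu} pins down the Newton polytope and the vertex coefficients, $Z_{\graph,c}$ is supported on the finitely many monomials $x_b^{\pm 1}$, and the infinite system $[(W_{\graph,c}+Z_{\graph,c})^m]_0=[W_{\graph,c}^m]_0$ forces $Z_{\graph,c}=0$. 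This global period argument replaces your local analysis entirely.
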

\begin{proof}
  We apply~\cref{theorem:gennnu} to Manon's toric degenerations associated to a trivalent graph
  and observe that for $u=0$ the Lagrangian torus $L_1(0)$ is monotone by \cref{lemma:monotone}.
  Now we consider the following two cases and without loss of generality we assume that the genus of the graph is at least 3.

  \paragraph{Case I}
  Assume that the graph $\gamma$ has no separating edges.
  This assumption combined with \cref{theorem:terminal-criterion}
  implies that the lattice points of the polytope $P^{\circ}_{\gamma,c}$
  come only from the rays and hence by \cref{theorem:gennnu},
  we get the required result from \Cref{corollary-terminal}.
  %\paragraph{Step II} We now consider the general case. Let $Z_{\gamma,c}:=m_0(L_{\gamma,c})-W_{\gamma,c}$
  %be the hypthetical difference. We will be done if we show that
  %$Z_{\gamma,c}=0$. ....

  \paragraph{Case II}
  We now consider the case of a general graph.
  First we observe that for any trivalent graph $(\gamma,c)$,
  the classical periods of $m_{0}(L_{\gamma,c})$ are equal. 
  On the other hand the classical periods of the graph potentials $W_{\gamma,c}$
  for trivalent graphs are also equal by \cref{theorem:graphtqft}.
  Thus we know that the classical periods of $m_0(L_{\gamma,c})$ equal the classical periods of $W_{\gamma,c}$.
  Now by \cref{theorem:gennnu},
  we get that the in order to determine $m_0(L_{\gamma,c})$,
  we need to only focus on the lattice points on the polytope $P^{\circ}_{\gamma,c}$ which do not come from rays.
  By \cref{lemma:grandcentral}, those are exactly the lattice points $\pm e_b^{\vee}$, where $b$ is a bridge.

  Consider the difference $Z_{\gamma,c}:=m_0(L_{\gamma,c})-W_{\gamma,c}$,
  which is supported on the monomials associated to lattice points $\pm e_b^{\vee}$ where $b$ is a bridge.
  Let $z_b^{\pm}$ be the coefficients of the monomials associated to the lattice points $\pm e_b$.
  Now we have equations
  \begin{equation}
    \label{eqn:stupid}
    [(W_{\gamma,c}+Z_{\gamma,c})^m]_0=[W_{\gamma,c}^m]_0 \ \text{for all}\  m\geq 0.
  \end{equation}

Thus we have infinitely many equations in as many variables as there are bridges in the graph.  Thus  it follows that $Z_{\graph,c}=0$ is the unique solution.
Thus we are done with determining the Floer/disk potentials.
\iffalse
  For $m=2$, by \cref{proposition:p2} we know that $[W_{\gamma,c}^m]_0=0$,
  and this in turn forces $z_b^{\pm}=0$.
  Thus we have infinitely many equations in as many variables as there are bridges in the graph.
  Now bridge variables non-adjacent to a loop cannot contribute to the fourth classical period.
  But bridges adjacent to a loop do contribute to the fourth classical period.
  Thus to match up the fourth period of $m_0(L_{\graph,c})$ with that of $W_{\graph,c}$,
  for every bridge $b$ separating a loop,
  we need to assign coefficients $z_{b}^{\pm}$ to the monomials associated to these lattice points $\pm e_b$
  with the property $z_b^{+}z_b^{-}=0$.
  If for any edge $b$ that separates a loop both $z_b^{\pm}<2$,
  then some other coefficient associated to a bridge separating a loop has to be greater than 2.
  This will imply that the $8$th classical periods of $m_0(L_{\graph,c})$ and $W_{\gamma,c}$ will be different
  which contradicts \cref{eqn:stupid}.
\fi
%  Similarly using \cref{eqn:stupid} for various $n$'s

The graph potential
  evaluated at the unique Morse point $(1,\dots, 1)$ in the domain $\mathbb{R}_{+}^{\dim L_{\gamma,c}}$
  gives the value $8g-8$ which agrees with that of $T_{\odd}$ as computed in \cite{MR1695800}. Thus optimality follows. 
\end{proof}
Recall that for each irreducible components $D_i$ of the torus invariant divisor $D$ of $X_0$, we have constructed curves $(C_i,\partial C_i)$ such that $[\partial C]$ are the primitive rays $\rho_i \in N$ corresponding to the divisor $D_i$. We have the following result that implies~\cref{theorem:magic-lemma} 
\begin{proposition}\label{proposition:magiclemmawithsalt}
	If $(C,\partial C)$ is the class of an irreducible curve of Maslov index two,
	then it lies in the convex hull of the classes $[C_i]$.
	Moreover, the class $(C,\partial C)$ (under the identification $[\partial C] \in \pi_1(L_0)=N$) is at the vertex (which is the extremal case) of the fan polytope,
	only if the limit curve lies inside one of these charts of $X_0$ given by one ray.
\end{proposition}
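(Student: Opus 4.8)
The plan is to degenerate the Maslov-index-two curve into the toric central fibre $X_0$ and then determine its boundary class from the combinatorics of the fan polytope $P^\circ$, using that $X_0$ is Gorenstein Fano and that the degeneration preserves the second Betti number. Concretely, I would fix an $\omega$-tame almost complex structure on the regular part of $\mathfrak X$ restricting to a torus-invariant one on $X_0^\regular$; since the symplectic parallel transport $c_t$ identifies all the fibres $L_t(u)$, the given irreducible Maslov-two disk $(C,\partial C)$ with boundary on $L=L_1(u)$ can be followed through the family, and by Gromov compactness it converges as $t\to0$ to a stable holomorphic disk $\mathfrak D_0$ in $X_0$ with boundary on $L_0(u)$. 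Because the degeneration preserves $b_2$ — equivalently it is a $\mathbb{Q}\Gamma$-smoothing in the sense of \cref{remark:second-betti} — the relative homology class and the Maslov index pass to this limit: under $\HH_2(X_1,L;\bZ)\cong N\oplus\HH_2(X_1;\bZ)\cong N\oplus\HH_2(X_0;\bZ)$ from \cref{lemma:monotone}, the class $[\mathfrak D_0]$ corresponds to $[C]$, its boundary to $[\partial C]\in N$, and $\mu(\mathfrak D_0)=\mu(C)=2$.

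Next I would analyse the limit. Decomposing $\mathfrak D_0$ into disk components (boundary on $L_0$) and sphere components, the Maslov index is additive. Since $X_0$ is Gorenstein Fano, $-K_{X_0}$ is Cartier and ample, so every sphere component has Maslov index at least $2$; and a nonconstant holomorphic disk with boundary on the torus fibre $L_0$ has Maslov index at least $2$ by Cho--Oh. As the total is $2$, either $\mathfrak D_0$ consists of a single nonconstant disk $u$ of Maslov index $2$ together with constant components, or it is a constant disk carrying one Maslov-two sphere bubble; in the second case $[\partial C]=0\in P^\circ$ and there is nothing to prove, so assume the first. Pass to a smooth toric resolution $\pi\colon\widehat X_0\to X_0$, which is an isomorphism over $X_0^\regular\supseteq L_0$; as $L_0$ lies in the open orbit, $u$ is not contained in the toric boundary and lifts to a disk $\widehat u\subset\widehat X_0$ with the same boundary class in $N$. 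Because $X_0$ is Gorenstein, hence canonical, $\widehat u$ has Maslov index $\le\mu(u)=2$, while it has Maslov index $\ge2$ by the maximum principle, so it equals $2$; the discrepancy count then forces $\widehat u$ to meet only those exceptional divisors of $\pi$ that are crepant, whose rays are necessarily lattice points of $\partial P^\circ$. The Cho--Oh positivity statement in the smooth toric variety $\widehat X_0$ now gives that $\widehat u$ meets the toric prime divisors non-negatively with total intersection number $\tfrac12\mu(\widehat u)=1$, so $[\partial C]=[\partial u]=[\partial\widehat u]$ is a single primitive ray $\widehat\rho_k$, a lattice point of $P^\circ$. Hence $[\partial C]\in P^\circ=\operatorname{conv}\{\rho_i\}=\operatorname{conv}\{[\partial C_i]\}$, which is the first assertion.

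For the second assertion, suppose $[\partial C]$ is a \emph{vertex} $\rho_i$ of $P^\circ$. Then $\widehat\rho_k=\rho_i$ is an original ray of $X_0$ and is the unique toric divisor met by $\widehat u$, so $\widehat u$ — hence $u$ — misses every $D_l$ with $l\ne i$ as well as the entire exceptional locus; therefore $u$ lies in the smooth affine chart $X_0\setminus\bigcup_{l\ne i}D_l\cong\bC\times(\bC^\times)^{n-1}$ attached to the single ray $\rho_i$, which is exactly the chart in which $C_i$ is constructed. (By the uniqueness clause of Cho--Oh one even gets $u=C_i$, which accounts for the coefficient $1$ at each ray in \cref{theorem:gennnu}.)

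The step I expect to be the real obstacle is the degeneration of the disk in the first paragraph: making sense of the limit of the holomorphic disk into the singular space $X_0$, whose singularities also sit in the total space $\mathfrak X$, and verifying that the Maslov index genuinely passes to the limit. This is exactly where one must invoke the preservation of $b_2$ — equivalently of the anticanonical class — or Galkin--Mikhalkin's $\mathbb{Q}\Gamma$-smoothing formalism for Maslov indices and monotonicity on singular symplectic spaces, together with the Nishinou--Nohara--Ueda symplectic parallel transport and Gromov compactness in the family. By contrast, the toric bookkeeping in the remaining steps is routine once $L_0$ is known to lie in the regular locus.
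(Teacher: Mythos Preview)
Your argument is correct and, in spirit, close to the paper's \emph{terminal} case argument (which also lifts to a smooth toric resolution and runs the discrepancy/intersection count of \cref{lem:pointcount}); you have correctly noticed that this argument survives in the merely Gorenstein case because Gorenstein Fano toric varieties are canonical, so the only exceptional divisors a Maslov-two lift can meet are the \emph{crepant} ones, whose rays are exactly the boundary lattice points of $P^\circ$. What you end up proving is that $[\partial C]$ is a single lattice point of $P^\circ$, and that it equals a vertex $\rho_i$ only if the limit disk sits in the chart $\bC\times(\bC^\times)^{n-1}$ attached to $D_i$.

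The paper's general-case proof takes a genuinely different route: instead of a smooth resolution it passes to a \emph{small $\bQ$-factorialisation} $r\colon X_0'\to X_0$, which adds no new rays. There the prime divisors $D_i'$ are in bijection with those of $X_0$ and form a $\bQ$-basis dual to the basic disks $C_i'$, so the (now well-defined, non-negative, rational) intersection numbers $p_i=C'\cdot D_i'$ with $\sum p_i=1$ immediately give $[C]=\sum p_i[C_i]$ in $\HH_2(X_0,L_0;\bQ)$, hence $[\partial C]=\sum p_i\rho_i\in P^\circ$, and the vertex case $[\partial C]=\rho_i$ forces $p_i=1$, $p_k=0$ for $k\ne i$, whence $C'$ avoids $D_k'$ and $C=r(C')$ lies in the chart. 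Compared to your approach, this avoids stable-disk bubbling analysis, avoids having to track crepant exceptional rays, and proves the literally stronger statement that the full relative class $[C]\in\HH_2(X_0,L_0;\bQ)$ lies in the convex hull of the $[C_i]$ (your argument as written only controls the boundary class $[\partial C]\in N$, which is all that the Newton-polytope application actually needs). Conversely, your approach buys a sharper pointwise conclusion—$[\partial C]$ is a lattice point of $P^\circ$, not merely in its convex hull—and makes the link to the coefficient-one statement at the rays in \cref{theorem:gennnu} transparent via Cho--Oh.
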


\paragraph{Proof of~\cref{proposition:magiclemmawithsalt}}
We will prove \cref{proposition:magiclemmawithsalt}
first in the special case of terminal singularities,
which is in fact enough for our intended application
involving graph potentials,
and then we will give the general case which
 is of independent interest.
By~\cref{theorem:terminal-criterion}, we know
that the toric varieties associated
to trivalent graphs constructed by Manon have
 Gorenstein terminal singularities.

\paragraph{The special case of terminal singularities}
%We first discuss a proof of~\cref{theorem:gennnu}
%assuming that the toric variety $X_0$ has Gorenstein terminal singularities.
Let~$X_0$ be a $\mathbb{Q}$-Gorenstein,
terminal,
Fano toric variety
with~$D=\sum_{i\in S}{D_i}$ the boundary
anti-canonical divisor
(the complement of the open orbit)
and let~$C$ be an irreducible curve not
contained in~$D$.
Let~$P$ be a point such that the local
intersection index of $C$ and $D$ in $P$ equals 1.
%Observe that in this terminal case,
%we do not use the assumption that the degeneration preserves the second Betti numbers.

We have the following lemma.
\begin{lemma}
  \label{lem:pointcount}
  $\#\{i \in S\ | \ P\in D_i\}|=1$.
\end{lemma}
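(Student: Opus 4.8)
The plan is to argue locally at $P$ on an affine toric chart. Let $\sigma$ be the cone in the fan of $X_0$ with $P\in O_\sigma$ (its torus orbit) and let $v_1,\dots,v_k\in N$ be the primitive generators of the rays of $\sigma$. By the orbit--cone correspondence one has $P\in D_i=V(\rho_i)$ precisely when $\rho_i$ is a ray of $\sigma$, so $\#\{i\in S\mid P\in D_i\}=k$ and it suffices to prove $k=1$. I work on $U_\sigma=\Spec\bC[\sigma^\vee\cap M]$, which contains $P$, and note that since $(C\cdot D)_P=1\neq 0$ we have $P\in D$, hence $\dim\sigma\geq 1$ and $k\geq 1$.

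Since $X_0$ is $\bQ$-Gorenstein and Fano, $D=-\mathrm{K}_{X_0}$ is $\bQ$-Cartier; on $U_\sigma$ it is cut out by a monomial, i.e.\ there is $u_\sigma\in M_\bQ$ with $\langle u_\sigma,v_i\rangle=1$ for all rays of $\sigma$ and $D|_{U_\sigma}=\operatorname{div}(\chi^{u_\sigma})$ (replace $D$ by a Cartier multiple to avoid fractional exponents; this only rescales what follows, and in our intended application --- Manon's degenerations, which are \emph{Gorenstein} terminal by \cref{theorem:terminal-criterion} --- no multiple is needed). Now pass to the normalization $\nu\colon\widetilde C\to C$. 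Since $C\not\subseteq D$ it meets the open torus, so $\nu^{*}\chi^{e}$ is a nonzero rational function on $\widetilde C$ for every $e\in M$, and for $Q\in\nu^{-1}(P)$ the map $e\mapsto\operatorname{ord}_Q(\nu^{*}\chi^{e})$ is a homomorphism $M\to\bZ$, hence a lattice point $w_Q\in N$. Regularity of $\chi^{e}$ on $U_\sigma$ for $e\in\sigma^\vee\cap M$ forces $w_Q\in(\sigma^\vee)^\vee=\sigma$; and since $P\in O_\sigma$ gives $\chi^{e}(P)=0$ for all $e\in(\sigma^\vee\setminus\sigma^\perp)\cap M$, we get $\langle w_Q,e\rangle\geq 1$ for such $e$, which says that $w_Q$ lies in the relative interior of $\sigma$, so in particular $w_Q\neq 0$.

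The key identity is then
\begin{equation*}
  (C\cdot D)_P=\operatorname{length}_{\cO_{C,P}}\!\bigl(\cO_{C,P}/\chi^{u_\sigma}\cO_{C,P}\bigr)=\sum_{Q\in\nu^{-1}(P)}\operatorname{ord}_Q(\nu^{*}\chi^{u_\sigma})=\sum_{Q\in\nu^{-1}(P)}\langle w_Q,u_\sigma\rangle,
\end{equation*}
where the middle equality is the standard fact that the colength of a non-zero-divisor is unchanged by the finite birational map $\nu$ (snake lemma applied to multiplication by $\chi^{u_\sigma}$ on the inclusion $\cO_{C,P}\hookrightarrow\prod_{Q\in\nu^{-1}(P)}\cO_{\widetilde C,Q}$, whose cokernel has finite length). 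Now I invoke terminality of $X_0$: for the cone $\sigma$ it says $\langle u_\sigma,v\rangle\geq 1$ for every $v\in(\sigma\cap N)\setminus\{0\}$, with equality only when $v\in\{v_1,\dots,v_k\}$. Each $w_Q$ is nonzero, so every summand is $\geq 1$ and $(C\cdot D)_P\geq\#\nu^{-1}(P)\geq 1$. The hypothesis $(C\cdot D)_P=1$ therefore forces $\nu^{-1}(P)$ to be a single point $Q$ with $\langle w_Q,u_\sigma\rangle=1$, whence $w_Q=v_j$ for some $j$. But $v_j$ lies on the ray $\rho_j$, a face of $\sigma$, while $w_Q$ lies in the relative interior of $\sigma$; these are compatible only if $\rho_j=\sigma$, i.e.\ $k=1$, as desired.

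The step I expect to need the most care is the interplay between the $\bQ$-Cartier structure of $D$ and the normalization in the key identity: one must check that the number called ``local intersection index'' is computed from the correct integral Cartier model of $D$ near $P$, and that the valuative rewriting $\operatorname{ord}_Q\circ\,\nu^{*}$ is applied consistently (this is where the passage $v\mapsto\langle v,u_\sigma\rangle\in\bQ$ needs to be handled with the right normalization). Everything else --- the orbit--cone bookkeeping, the description of the relative interior via $\sigma^\vee\setminus\sigma^\perp$, and the one-cone characterization of terminality --- is standard toric geometry.
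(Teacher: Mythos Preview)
Your proof is correct and takes a genuinely different route from the paper's. The paper passes to a toric resolution $f\colon\widetilde{X}_0\to X_0$, lifts $C$ to $C'$, and uses the projection formula $C\cdot D=C'\cdot f^*D$ together with $f^*D=\sum_i D_i'+\sum_j(1+a_j)E_j$; terminality enters as positivity of the discrepancies $a_j>0$, which forces all $C'\cdot E_j$ to vanish and leaves a single $i$ with $C'\cdot D_i'=1$. In effect the paper argues with \emph{global} intersection numbers on a smooth model. You instead stay on the singular affine chart $U_\sigma$ and read the curve through the monomial valuation $w_Q\in N$ coming from its normalization; terminality enters in its lattice form (the only nonzero lattice points of $\sigma$ at height $\le 1$ for $u_\sigma$ are the primitive ray generators), and the relative-interior observation pins $\sigma$ down to a single ray. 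Your argument is purely local, matches the hypothesis as stated (local intersection index $1$ at $P$, whereas the paper's write-up tacitly uses $C\cdot D=1$ globally), avoids any resolution, and as a bonus the vector $w_Q$ you produce is exactly the boundary class $[\partial C]\in N$ used in the subsequent disk-counting argument. The paper's approach, on the other hand, is the more standard birational-geometry manoeuvre and would adapt more readily beyond the toric setting. The only place to be careful in your write-up is the one you already flag: when $X_0$ is merely $\bQ$-Gorenstein, replace $u_\sigma$ by an integral multiple before forming $\chi^{u_\sigma}$ and then divide back; the inequality $\langle u_\sigma,w_Q\rangle\ge 1$ with equality only at the $v_i$ survives this rescaling, so the conclusion is unaffected.
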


\begin{proof}
  Let $f\colon\widetilde{X}_0 \rightarrow X_0$ be
  any toric resolution of singularities
  and consider the unique irreducible
  curve $C'$ such that $f$ maps to $C$.
  Thus $f_*(C')=C$ as~$1$-cycles.
  On the other hand
  \begin{equation}
    f^*(-\mathrm{K}_{X_0})=-\mathrm{K}_{\widetilde{X}_0}+\sum_{j}a_jE_j,
  \end{equation}
  where $E_j$ are exceptional divisors
  and since $\widetilde{X}_0$ is toric, we get
  \begin{equation}
    f^*D=\sum_i D_i' +\sum_j (1+a_j)E_j,
  \end{equation}
  where $D_i'$ denotes the proper preimages of $D_i$.
  Now by the projection formula we get
  \begin{equation}
    \label{equation:obvious1}
    C\cdot D=C'\cdot f^*D.
  \end{equation}
  By assumption the left-hand side of~\eqref{equation:obvious1} is one.
  Expanding the right-hand side of~\eqref{equation:obvious1} gives
  \begin{equation}
    \label{equation:obvious2}
    \sum_i (C'\cdot D'_i) + \sum_j (1+a_j)(C'\cdot E_j)=1
  \end{equation}
  Observe that since $\widetilde{X}_0$ is smooth
  and the divisors $D_i'$ and $E_j$'s are all Cartier divisors,
  the intersection numbers are all integers.
  Moreover all the intersection numbers are non-negative.
  Thus for~\eqref{equation:obvious2} to hold we either have that
  \begin{itemize}
    \item there exists exactly one $i$ such that $C\cdot D_i'=1$
      and all other terms are zero,
      which forces $C$ to be disjoint from $D_k$ for $k\neq i$
    \item there exists an index $j$ such that $C'\cdot E_j\neq 0$
      and thus $C'\cdot f^*D\geq 1+a_j$.
  \end{itemize}
  Now since $X$ has terminal singularities we have that~$1+a_j>1$.
  Thus the second possibility cannot happen.
  Thus we are done.
\end{proof}

%We use Lemma \ref{lem:pointcount} below to prove~\cref{theorem:gennnu}.
\begin{proof}[Proof of~\cref{proposition:magiclemmawithsalt} in the terminal case]
  Let $u$ be a interior point of $P^{\interior}$ as in the statement
  of~\cref{theorem:gennnu} and let $L_0(u)$ be a monotone
  Lagrangian torus in $X_0$. Moreover, we have a family of
  pairs $(X_t,L_t)_{t\neq 0}$ of smooth Fano varieties
  along with monotone
   Lagrangian tori~$L_t=L_t(u) \subset X_t$ as in~\eqref{eqn:lagtori}.
Recall that $B$ has two special points $0$ and $1$ and $X_1=X$.
We further assume that $X_0$ is a $\mathbb{Q}$-Gorenstein,
terminal, Fano toric variety.

Let $u$ be  an interior point in the
moment polytope and $L_1(u)$ be
a monotone Lagrangian torus in $X_0^\regular$. We denote its
symplectic  transport to $X_t$ by  $L_t$ as in~\eqref{eqn:lagtori}. We refer
the reader to~\eqref{equation:skeleton}.

For a family $C_t$ of curves in $X_t$, we denote the
limiting curve as $t\rightarrow 0$ by $C \in X_0$.
Now since the total space $\mathfrak{X}$  is~$\mathbb{Q}$-Gorenstein,
by the adjunction formula the Maslov index of $C$ equals the
Maslov index of $C_t$.

We are interested in counting disks of
Maslov index two
whose boundary lies on the monotone Lagrangian torus $L=L_1(u)$ of $X$.
By deforming the varieties $X$, we have a family of disks of Maslov index
two in the fibers $X_t$ with boundary $L_t$. Using the condition that
the Maslov index is two, these disks will intersect the anti-canonical
divisor $D$ in the toric variety $X_0$ with total intersection index one.

Now \cref{lem:pointcount} tells us that
the limiting curves $C$
intersects exactly one boundary divisor say $D_i$,
thus it lies in the
complement of $D-D_i$ in $X_0$. This
complement is isomorphic to
$\mathbb{C}\times (\mathbb{C}^{\times})^{\dim X_0-1}$.
Conversely for every chart, we have exactly one curve
by direct computation. Now chasing the torus in~\eqref{equation:skeleton}
we can transport the
disk back to $(X_t,L_t)$. This is possible since~\cref{lem:pointcount}
allows us to ignore the $2$-coskeleton in the toric variety~$X_0$.
This completes the proof in the terminal case.
\end{proof}

%\iffalse
%\begin{theorem}\label{theorem:nnugen1}
%  Let $(X,L)$ be a smooth Fano variety along
%  with a monotone Lagrangian torus obtained as above, then
%  \begin{equation}
%  m_{0}(L)(x)=\sum_{i=1}^m \exp(\langle v_i,x \rangle) T^{\ell_i(u)},
%  \end{equation}where~$v_i$ are the generators of the polar dual~$P^\circ$
%  and~$\ell_i$ are the equations defining
%  the facets of the moment polytope~$P$
%  and valuations of $x$ belong to the moment polytope $P$.
%\end{theorem}
%
%Applying \cref{theorem:nnugen1} to our situation
%we obtain the following corollary.
%% using the same change of variables as in \cite[Theorem~3]{MR2879356}.
%\begin{corollary}
%  \label{corollary:nnugen1}
%  Let $(\graph,c)$ be a trivalent graph with one colored vertex,
%  with no separating edges, then the following are equal:
%  \begin{itemize}
%    \item the graph potential~${W}_{\graph,c}$,
%    \item the disk potential of the Lagrangian torus $L_\graph \subset \odd$;
%  \end{itemize}
%  where $L_\graph = \Phi_\graph^{-1}(0)$ is a fiber
%  of the integrable system
%  $\Phi_\graph\colon\odd^\regular \to \Delta_{\graph,c}$
%  associated with Manon's toric degeneration of $\odd$ to $\manontoric{\graph}$.
%\end{corollary}
%\fi

\paragraph{The general case}We now consider the general case as in the statement of Theorem \ref{theorem:gennnu}. We first find generators for $\pi_2(X_0,L)$.

\paragraph{Generators for $\pi_2(X_0,L)$}
We first show that the curves $[C_i]$ constructed before generate $\pi_2(X_0,L_0)$.
\begin{lemma}
  The classes $[C_i]$ generate $\pi_2(X_0,L_0)$.
\end{lemma}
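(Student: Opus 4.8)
The plan is to exploit the stratified structure of the toric variety $X_0$ and the moment map, using the ladder diagram~\eqref{eqn:keyladder} as the organising tool. First I would recall that $X_0$ deformation retracts onto the part lying over a neighbourhood of the interior point $u\in P^{\interior}$, and more precisely that the inclusion $X_0^{\interior}\hookrightarrow X_0$ induces surjections on $\pi_1$ and $\pi_2$ relative to $L_0$: indeed $X_0^{\interior}$ is just the total space of a (trivial) $(\bC^\times)^n$-bundle over a point, hence a neighbourhood of $L_0$, so every disc in $(X_0,L_0)$ can be pushed by the moment-map flow towards the strata it meets. The key point is that the generators $[C_i]$ are precisely the discs that "escape" $X_0^{\interior}$ transversally through the codimension-one face of $P$ corresponding to the ray $\rho_i$, i.e.\ through the open stratum $D_i^o$ of the torus-invariant divisor $D_i$.

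The main step is a Mayer--Vietoris / excision argument on $\pi_2(X_0,L_0)$ stratum by stratum. Concretely I would argue as follows. Let $X_0^\onecosk$ be the union of $X_0^{\interior}$ with the preimages of the open codimension-one faces; by construction $X_0^\onecosk$ is (homotopy equivalent to) $L_0\cup\bigcup_i D_{v_i}$ in the notation of \cref{lemma:sergeytorictrick}, so the long exact sequence of the pair $(X_0^\onecosk,L_0)$ together with the computation $\HH_1(T_V,\bZ)=0$ there shows that $\pi_2(X_0^\onecosk,L_0)$ is generated by the classes $[C_i]=[D_{v_i}]$. It therefore suffices to show that the inclusion $X_0^\onecosk\hookrightarrow X_0$ is $2$-connected on relative homotopy, i.e.\ $\pi_2(X_0,X_0^\onecosk)=0$ and the map $\pi_2(X_0^\onecosk,L_0)\to\pi_2(X_0,L_0)$ is onto. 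This is a general position statement: $X_0\setminus X_0^\onecosk$ is the preimage of the union of faces of $P$ of codimension $\geq 2$, which has real codimension $\geq 4$ in $X_0$ (the preimage of a codimension-$k$ face is a toric subvariety of complex codimension $k$), so a generic disc $f:(D,\partial D)\to(X_0,L_0)$ can be homotoped off it rel boundary, landing in $X_0^\onecosk$. I would make this precise using the moment-map flow to push discs toward lower strata, combined with the fact (already used in \cref{proposition:magiclemmawithsalt} in the terminal case) that for $\bQ$-Gorenstein toric $X_0$ one controls how a curve meets the boundary strata.

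I expect the main obstacle to be the transversality/general-position argument for pushing an arbitrary continuous disc off the codimension-$\geq 2$ strata while keeping its boundary on $L_0$ and without changing its relative homotopy class: on a singular toric variety one must be slightly careful because the "codimension $\geq 4$" bound is in the sense of the stratification, and near the singular strata the local structure is a quotient, so a naive transversality argument needs the moment-map flow (or the explicit affine charts $\bC\times(\bC^\times)^{n-1}$ appearing in the construction of the $C_i$) to replace smooth transversality. Once a disc is supported in $X_0^\onecosk$, the conclusion is immediate from the previous lemma. I would therefore structure the write-up as: (i) reduce to $X_0^\onecosk$ by general position; (ii) quote \cref{lemma:sergeytorictrick} and its proof to conclude that $[C_i]$ generate $\pi_2(X_0^\onecosk,L_0)$; (iii) assemble.
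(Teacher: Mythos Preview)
Your approach is genuinely different from the paper's. The paper argues purely algebraically: for smooth or simplicial $X_0$ it invokes the standard toric short exact sequence $0 \to M \to \bigoplus_i \bZ D_i \to \HH^2(X_0,\bZ) \to 0$ and its dual $0 \to \HH_2(X_0,\bZ) \to \bigoplus_i \bZ C_i \to N \to 0$, then compares with the long exact sequence of the pair $(X_0,L_0)$ to conclude $\pi_2(X_0,L_0) \cong \bigoplus_i \bZ C_i$; for general $X_0$ it passes through a $\bQ$\dash factorialisation. There is no general-position step at all.

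However, there is a real gap in your step~(ii). The input $\HH_1(T_V,\bZ)=0$ only tells you that $\pi_1(X_0^{\onecosk})=0$, hence that the boundary map $\partial\colon\pi_2(X_0^{\onecosk},L_0)\to\pi_1(L_0)=N$ is onto. It gives you no control over the kernel, which by the long exact sequence is $\pi_2(X_0^{\onecosk})\cong\HH_2(X_0^{\onecosk},\bZ)$. This group is in general nonzero (already for $X_0=\bP^2$ the one-coskeleton is $\bP^2$ minus the three torus-fixed points, with $\HH_2=\bZ$), so you still have to argue that the relations $\sum a_i[C_i]$ with $\sum a_i v_i=0$ generate it. That is exactly the statement that $\HH_2$ of a smooth toric variety is spanned by toric curve classes, i.e.\ the dualised toric exact sequence --- precisely the algebraic input the paper uses directly. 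So your general-position reduction to $X_0^{\onecosk}$, while correct and pleasant, does not bypass the algebraic step; it only relocates it to the smooth open toric variety $X_0^{\onecosk}$, where you then need the same argument the paper gives.

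A side remark: the homotopy equivalence $X_0^{\onecosk}\simeq L_0\cup\bigcup_i D_{v_i}$ you borrow from the proof of \cref{lemma:sergeytorictrick} is not literally correct (for $n=2$ with rays $e_1,e_2$ one has $X_0^{\onecosk}=\bC^2\setminus\{0\}\simeq S^3$, whereas $T^2\cup D_1\cup D_2$ has $\HH_2=\bZ$ and is homotopy equivalent to $S^2$). This does not affect the $\HH_1$ computation used there, but you should not cite the homotopy equivalence itself.
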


\begin{proof}
  First we consider the case that $X_0$ is smooth
  (or $\mathbb{Q}$-factorial/simplicial).
  In the case of a smooth (or $\mathbb{Q}$-factorial/simplicial) toric variety
  the curves $C_i$ are dual to divisors $D_i$.
  We can associate to $D_i$ classes in $\HH^2(X_0,\mathbb{Z})$
  and there is a short exact sequence
  \begin{equation}
    0 \to M \to \bigoplus_i \mathbb{Z} D_i \to\HH^2(X_0,\mathbb{Z}) \to 0
  \end{equation}
  Here $M$ is the lattice of monomials dual to the lattice $N$,
  and we also consider the dual short exact sequence
  \begin{equation}\label{equation:a2}
  0 \to \HH_2(X_0,\mathbb{Z}) \to \pi_2( X_0,L_0) \to N \to 0
  \end{equation}
  Equivalently in the basis of curves
  \begin{equation}
    \label{equation:aloha2'}
    0 \to \HH_2(X_0,\mathbb{Z}) \to \bigoplus_i \mathbb{Z} C_i \to N \to 0
  \end{equation}
  Comparing~\eqref{equation:a2} with~\eqref{equation:aloha2'}
  we get $\pi_2(X_0,L_0)=\bigoplus_i\mathbb{Z}C_i$.
  Thus we are done when $X_0$ is either smooth or simplicial.

  %\iffalse
  %Note that if the toric variety $X_0$ is not factorial then right term in $M$-sequence
  %is not $H^2(X_0)$ (the class group of Cartier divisors),
  %but instead it is $H_{2n-2}(X_0)$ (the group of classes of Weil divisors).
  %Also generically there is no intersection pairing between curves and Weil divisors.
  %Nevertheless the curves $C_i$ intersect divisors $D_j$
  %transversally in one point if $i=j$ and are disjoint from $D_j$ otherwise.
  %\fi

  For any $\mathbb{Q}$-factorialization $\widetilde{X}_0$
  (i.e.~a triangulation of a fan of $X_0$),
  the proper preimages of $C_i$ and $D_i$ will form a pair of dual bases
  of $\HH_2(\widetilde{X}_0,L_0)$ and $\HH^2(\widetilde{X}_0,L_0)$
  Also note that the pair $(X_0,L_0)$ is homotopy equivalent
  to the pair $(X_0,(\mathbb{C}^\times)^n)$.
  So for non-$\mathbb{Q}$-factorial $X_0$ we have \emph{more} generators than needed for a base,
  and the map $\bigoplus_i \mathbb{Z} C_i \to \pi_2(X_0,L)$ has a kernel.
\end{proof}

\begin{proof}[Proof of~\cref{proposition:magiclemmawithsalt} in the general case]
  Let us analyze a holomorphic irreducible curve $(C,\partial C) \subset (X_0,L_0)$
  of Maslov index two, for which $C\cdot D = 1$,
  where $D$ is the anti-canonical divisor in~$X_0$.

  Choose any small $\mathbb{Q}$-factorialisation  $r\colon X_0'\to X_0$
  so that $r^* D =  \sum_i D'_i$,
  and lift a curve $(C,\partial C) \subset (X_0,L_0)$
  to $(C',\partial C') \subset (X_0',L)$.
  So we have $\HH_2(r)([C']) = [C] \in \HH_2(X_0,L_0)$.
  On $X_0'$ all intersection indices $C'\cdot D'_i$ are well-defined non-negative rational numbers,
  say $p_i = C'\cdot D'_i$, and $\sum_i p_i = 1$.

  Note that the curve (or, rather, effective relative cycle) $\sum p_i C'_i$
  has exactly the same intersection indices with all $D_i$,
  thus we have an equality
  \begin{equation}
    [C'] = \sum p_i [C'_i] \in \HH_2(X_0',L_0;\mathbb{Q})
  \end{equation}
  to which we can apply $\HH_2(r)$ to obtain an equality
  \begin{equation}
    \HH_2(r)([C']) = \sum p_i \HH_2(r)([C'_i]) \in \HH_2(X_0,L_0;\mathbb{Q}),
  \end{equation}
  i.e.
  \begin{equation}
    [C] = \sum p_i [C_i] \in \HH_2(X_0,L_0;\mathbb{Q}).
  \end{equation}

  This implies that the class of any irreducible curve of Maslov index two
  lies in the convex hull of the classes $[C_i]$.
  By considering the boundary
  (which we also could have applied at the level of $X_0'$)
  we obtain
  \begin{equation}
    [\partial C] = \sum p_i [\partial C_i] \in \HH_1(L_0;\mathbb{Q})
  \end{equation}
  so the class of $[\partial C]$ has to be an integer point inside the fan polytope of $X_0$
  (the convex hull of the primitive rays $\rho_i$).

  Now we also see that if $C$ is an irreducible curve of Maslov index two
  with $[\partial C] = \rho_i$ for some $i$,
  then $[C'] = [C'_i] \in \HH_2(X_0',L_0)$,
  hence $C'\cdot D'_k = 0$ for $k\neq i$,
  thus $C'$ is disjoint from $D'_k$ for $k\neq i$,
  so $C = r(C')$ is disjoint from $D_k = r(D'_k)$ for $k\neq i$,
  i.e.~$C$ lies in an affine chart $\mathbb{C} \times (\mathbb{C}^\times)^{n-1}$
  associated with $D_i$ (so $C$ is the basic disk described above).
  This completes the proof.
\end{proof}

%Now as before Proposition~\cref{proposition:magiclemmawithsalt} allows us to ignore the $2-coskeleton$ in the toric variety and hence chasing the torus $L(u) \subset T_V\subset X_0$ as in ~\eqref{equation:skeleton} we can transport the
%disk back to $(X_t,L_t)$.

\section{Mirror partners for moduli of vector bundles}
\label{section:periods}
We can now establish the proof of \cref{theorem:mirror-symmetry-introduction}.
It is given as \cref{corollary:equality-coefficients}
and \cref{proposition:equality-periods}.

\paragraph{Descendant invariants}
Let~$X$ be a Fano variety.
We will denote by~$X_{0,1,\beta}$\index{X@$X_{0,1,\beta}$}
the moduli space of stable maps~$f\colon(C;x)\to X$
where~$C$ is a rational curve with a marked point~$x\in C$
such that~$[f(C)]=\beta\in\HH_2(X,\bZ)$ is a fixed homology class.
It has a virtual fundamental class
in degree~$\int_\beta\mathrm{c}_1(X)-2+\dim X$,
and comes with the evaluation and the forgetful morphisms
\begin{equation}
\begin{aligned}
  \ev\colon X_{0,1,\beta}&\to X = X_{0,1,0}, \\
  \pi\colon X_{0,1,\beta}&\to X_{0,0,\beta}.
\end{aligned}
\end{equation}
Denote by~$\psi$ the first Chern class of the universal cotangent line bundle,
which is also the relative dualizing sheaf~$\omega_\pi$.
We can use it to define the \emph{descendant Gromov--Witten invariants for $\beta\in\mathrm{H}_2(X,\bZ)$}.
These count count rational curves of degree $d$ passing through a fixed generic point
with some kind of tangency condition, and are defined as
\begin{equation}
  \label{equation:descendants}
  \begin{aligned}
    p_\beta&:=
    \langle [\pt] \cdot \psi^{d-2} \rangle_{0,1,\beta}^X
    =\int_{[X_{0,1,\beta}]^{\mathrm{vir}}} \ev^*([\pt]) \cup \psi^{d-2}, \\
    \index{p@$p_d$}
    p_d(X)&:= \sum_{\mu(\beta) = 2d} p_\beta,\qquad d\geq 2, \\
  \end{aligned}
\end{equation}
where $[\pt]$ is the class of the point,
i.e.~a homogeneous class such that $\int_X [\pt] = 1$. We moreover set
\begin{equation}
  p_0(X) := 1,\qquad p_1(X) := 0,
\end{equation}
and
\begin{equation}
  \index{c@$c_d$}
  c_d := d! \cdot p_d,
\end{equation}

\begin{remark}
  Under the assumption that $X$
  has a smooth anti-canonical divisor $D$\footnote{Which by the Bertini theorem holds for the varieties we consider in this paper,
  but fails for some other Fano varieties such as the squares of degree one del Pezzo surfaces.}, Mandel
  \cite[Theorem~1.2]{1903.12014} % Mandel
  shows that $c_d$ is the naive number of maps from $\bP^1$ to $X$
  passing through a fixed generic point on $X$ such that the preimage of the divisor $D$
  is a fixed generic $d$-tuple of points on $\bP^1$.
  In particular the number $c_d$ is a non-negative integer.
\end{remark}

\paragraph{Graph potentials as mirror partners}
The next result, 
demonstrates how the combinatorics of graph potentials
controls the enumerative geometry of~$\odd$.
The equality in this corollary corresponds to
the original condition of enumerative mirror symmetry for Fano manifolds
proposed by Eguchi--Hori--Xiong.

Let us briefly recall the notion of
quantum periods for Fano varieties,
as discussed in Coates--Corti--Galkin--Kasprzyk and Galkin--Iritani \cite{MR3469127,MR4384381}. % CCGGK, GI
% We refer the reader to
% \cite{MR3469127,MR1492534} % CCGGK, Fulton-Pandharipande (1998)
% for more complete historical references to quantum cohomology.
The (unregularized and regularized) \emph{quantum periods}
are the generating power series for descendant Gromov--Witten invariants
$p_d$ and $c_d = d! \cdot p_d$ defined in \eqref{equation:descendants}:
\begin{equation}
  \begin{aligned}
    \label{equation:quantum-period}
    \index{G@$\mathrm{G}_X(t)$}
    \mathrm{G}_X(t)           & := 1 + \sum_{d=2}^{+\infty} p_d(X) t^d, \\
    \index{G@$\widehat{\mathrm{G}}_X(\kappa)$}
    \widehat{\mathrm{G}}_X(\kappa) & := 1 + \sum_{d=2}^{+\infty} c_d(X) \kappa^d =
    \frac{1}{\kappa} \int_0^\infty \mathrm{G}_X(t) \exp(t/\kappa) \dd t
  \end{aligned}
\end{equation}

\begin{corollary}
  \label{corollary:equality-coefficients}
  Let $(\graph,c)$ be a colored trivalent graph
  of genus~$g\geq 2$ with one colored vertex.
  Let~$d\geq 2$.
  We have the equality
  \begin{equation}
    \label{equation:equality-coefficients}
    c_d(\odd)  = [W_{\graph,c}^d]_{z^0}
  \end{equation}
  of the Gromov--Witten invariant $c_d$ and the
   constant term of $d$th power of the Laurent
   polynomial $W_{\graph,c}$.

  In particular we obtain that $W_{\gamma,c}$ is a \emph{mirror partner} for $\odd$,
  in the sense of \cite[\S3]{2112.15339}.
\end{corollary}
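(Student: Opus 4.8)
The plan is to deduce this from \cref{theorem:graphisdisc} together with the open--closed correspondence relating the disk potential of a monotone Lagrangian torus to the Gromov--Witten invariants of the ambient Fano manifold. First I would invoke \cref{theorem:graphisdisc}: since $(\graph,c)$ is a colored trivalent graph of genus $g\geq 2$ with one colored vertex, the graph potential $W_{\graph,c}$ equals the disk/Floer potential $m_0(L_{\graph,c})$ of the monotone Lagrangian torus $L_{\graph,c}=\Phi_{\graph,c}^{-1}(0)\subset\odd$, the torus fiber of the integrable system coming from Manon's toric degeneration. Because $(\odd,\omega)$ is monotone and $L_{\graph,c}$ is monotone (\cref{proposition:monotone-torus}), all Maslov index two disks with boundary on $L_{\graph,c}$ carry the same symplectic area, so after normalising the Novikov parameter $m_0(L_{\graph,c})$ is an honest Laurent polynomial on the torus $T_\graph^\vee$, and the constant term $[m_0(L_{\graph,c})^d]_{z^0}$ of its $d$th power is well defined; by \cref{theorem:graphisdisc} it agrees with $[W_{\graph,c}^d]_{z^0}$.

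Next I would identify $[m_0(L_{\graph,c})^d]_{z^0}$ with the regularised descendant Gromov--Witten invariant $c_d(\odd)$ of~\eqref{equation:descendants}. Combinatorially, the constant term of the $d$th power of a Laurent polynomial records the number of maps from tropical trees into the corresponding polytope $\Delta_{\graph,c}$ weighted as in Rau's tropical descendant Gromov--Witten enumeration \cite{Rau-phd} (here the ordering of the $d$ incidences produces exactly the factor $d!$ relating $c_d=d!\,p_d$ to $p_d$). This tropical count in turn computes the honest descendant invariant of $\odd$: Manon's degeneration, which preserves the second Betti number (the hypothesis already exploited in \cref{theorem:gennnu}), lets one transport the count of rational curves through a generic point with the prescribed tangency to an effective anticanonical divisor from $\odd$ to its toric limit, where it becomes the tropical count above; this is the open--closed relation of Bondal--Galkin \cite{Bondal-Galkin} and Nishinou--Nohara--Ueda \cite{Nishinou-Nohara-Ueda} in the toric-degeneration setting, generalised by Tonkonog \cite{1801.06921} to an abstract monotone Lagrangian torus, which one may also apply directly to $L_{\graph,c}\subset\odd$. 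Chaining the equalities gives
\begin{equation}
  c_d(\odd)=[m_0(L_{\graph,c})^d]_{z^0}=[W_{\graph,c}^d]_{z^0},
\end{equation}
which is~\eqref{equation:equality-coefficients}.

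The ``mirror partner'' assertion is then a restatement of this identity: summing over $d$ shows that the classical period $\sum_{d\geq 0}[W_{\graph,c}^d]_{z^0}t^d$ of $W_{\graph,c}$ coincides with the regularised quantum period $\widehat{\mathrm{G}}_{\odd}$ of~\eqref{equation:quantum-period}, which is precisely the condition of \cite[\S3]{2112.15339}.

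The step I expect to be the main obstacle is the passage from the tropical descendant count for $\Delta_{\graph,c}$ to the honest descendant Gromov--Witten invariants of $\odd$, i.e.\ justifying that the degeneration genuinely computes these enumerative invariants: one must check that the hypotheses needed for the open--closed correspondence hold for Manon's (singular, Gorenstein Fano) toric limit---monotonicity and minimal Maslov number two of $L_{\graph,c}$, unobstructedness of the potential, and preservation of the second Betti number---and that the normalisation matches $c_d=d!\,p_d$ rather than $p_d$. If one prefers to cite Tonkonog's abstract result rather than re-run the toric argument, the obstacle becomes instead verifying that $\odd$ and $L_{\graph,c}$ satisfy the monotonicity and unobstructedness hypotheses of \cite{1801.06921}; either route should be routine given \cref{theorem:graphisdisc} and the properties of Manon's degeneration established in \cref{section:toric}.
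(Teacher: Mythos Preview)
Your proposal is correct and follows essentially the same route as the paper: invoke \cref{theorem:graphisdisc} to identify $W_{\graph,c}$ with the Floer potential of the monotone torus $L_{\graph,c}$, then apply the open--closed correspondence (Tonkonog \cite[Theorem~1.1]{1801.06921}, with antecedents in Bondal--Galkin \cite{Bondal-Galkin} and Rau \cite{Rau-phd}) to equate the classical period of that potential with the quantum period of $\odd$. The only cosmetic difference is that the paper's proof first restricts to graphs without separating edges and then extends to arbitrary graphs via \cref{theorem:graphtqft}, whereas you apply \cref{theorem:graphisdisc} directly in its full generality; since \cref{theorem:graphisdisc} already covers all graphs, your shortcut is justified.
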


\begin{proof}
  By \cref{theorem:graphisdisc} we have that
  the graph potential agrees with the Floer
  potential for any trivalent graph
  with no separating edges.   Hence their classical periods agree.

 By applying \cite[Theorem~1.1]{1801.06921}
  (see also Bondal--Galkin \cite{Bondal-Galkin} and Rau \cite{Rau-phd}) to
  the monotone Lagrangian torus constructed in
  \cref{proposition:monotone-torus}
  we obtain that the classical period of the Floer potential
  agrees with the quantum period of~$\odd$.
  The conclusion about the mirror partner now follows for trivalent
  graph with no separating edges.

  Now since by \cref{theorem:graphtqft} the periods of the graph
  potential associated to any trivalent
  graph of genus $g$ with one colored
  vertex are the same,
  the general case follows.
\end{proof}

\paragraph{Consequences}
The unregularized quantum period $\mathrm{G}_X(t)$ is equal to
the fundamental term of Givental's $J$-series $J_X(t)$ from \cite{MR1408320}. % Givental
This implies that $\mathrm{G}_X(t)$
satisfies the quantum differential equation $L(t,\partial_t)$ with an irregular singularity.
Its Fourier--Laplace transform $\widehat{\mathrm{G}}_X(\kappa)$
satisfies the so-called regularized quantum differential equation
$\widehat{L}(\kappa,\partial_\kappa)$
that is expected to be regular.

Under the assumption that all eigenvalues
of the quantum multiplication operator $\star_0 \mathrm{c}_1(X)$ are distinct
the respective regularized connection is Fuchsian.
However for $X = \odd$ only two eigenvalues have multiplicity one,
as can be computed by result due to Mu\~noz in \cite{MR1695800}
(see \cite[\S3.1]{gp-decomp} for additional discussion).
But under the identification of the period sequences we can show that
the principle irreducible component of
the regularized quantum connection is
a summand of a Picard--Fuchs equation
and, as a corollary, indeed has regular singularities.

Summing \eqref{equation:equality-coefficients} with weights $\frac{t^d}{d!}$ or $\kappa^d$
we get an equivalent reformulation of \cref{corollary:equality-coefficients}
in terms of the equality of the respective period series.
\begin{proposition}
\label{proposition:equality-periods}
Let $(\graph,c)$ be a colored trivalent graph
of genus~$g\geq 2$ with one colored vertex.
%Assuming\fixthis{no longer needed} \cref{conjecture:small-resolution} we have the equality
\begin{equation}
\begin{aligned}
  \mathrm{G}_{\odd}(t) &= \int_{|z|=1} \exp(t\cdot W_{\graph,c}(z))\frac{\mathrm{d}z}{z} , \\
  \widehat{\mathrm{G}}_{\odd}(\kappa)
  &=
  \pi_{\widetilde{W}_{\graph,c}}(\kappa) \,\qquad \text{for }|\kappa| \leq \frac{1}{8g-8},
\end{aligned}
\end{equation}
of the unregularized (resp. regularized) quantum period of~$\odd$
and the oscillating integral (resp. period)
of the graph potential~$\widetilde{W}_{\graph,c}$.
\end{proposition}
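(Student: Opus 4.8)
The plan is to obtain \cref{proposition:equality-periods} as a formal repackaging of the coefficient identity in \cref{corollary:equality-coefficients}, using only the definitions~\eqref{equation:quantum-period} of the (un)regularized quantum periods and the elementary dictionary between the constant-term generating function of a Laurent polynomial and its oscillating integral. First I would record that for a Laurent polynomial $W$ on an $n$-dimensional torus one has
\begin{equation}
  \int_{|z|=1}\exp(tW(z))\,\frac{\dd z}{z}
  := \frac{1}{(2\pi\ii)^n}\oint \exp(tW(z))\,\frac{\dd z_1}{z_1}\wedge\dots\wedge\frac{\dd z_n}{z_n}
  = \sum_{d\geq 0}\frac{t^d}{d!}\,[W^d]_{z^0},
\end{equation}
by expanding the exponential and extracting constant terms termwise, while the \emph{period} of $W$ is by definition $\pi_{W}(\kappa)=\sum_{d\geq 0}[W^d]_{z^0}\kappa^d$, and the two series are exchanged under the Fourier--Laplace transform recorded in~\eqref{equation:quantum-period}. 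Since the constant term is just the coefficient of the monomial $1$, it is intrinsic to the torus $T_\graph^\vee$, is unchanged when one passes between $\widetilde W_{\graph,c}$ on $\widetilde T_\graph^\vee$ and its descent $W_{\graph,c}$ on $T_\graph^\vee$ (so $[\widetilde W_{\graph,c}^d]_{z^0}=[W_{\graph,c}^d]_{z^0}$ and $\pi_{\widetilde W_{\graph,c}}=\pi_{W_{\graph,c}}$), and is well-defined even though $N_\graph$ carries no preferred basis.

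Next I would substitute \cref{corollary:equality-coefficients}: $c_d(\odd)=[W_{\graph,c}^d]_{z^0}$ for all $d\geq 2$. For $d=0$ both sides equal $1$, and for $d=1$ both vanish, since $p_1(\odd):=0$ by convention and $[W_{\graph,c}^1]_{z^0}=0$ because every vertex potential~\eqref{equation:vertex-potential}, hence $W_{\graph,c}$, has zero constant term \cite{gp-tqft}. Thus $p_d(\odd)=[W_{\graph,c}^d]_{z^0}/d!$ and $c_d(\odd)=[W_{\graph,c}^d]_{z^0}$ hold for \emph{all} $d\geq 0$. Summing the first identity against $t^d$ and applying the oscillating-integral formula above gives
\begin{equation}
  \mathrm{G}_{\odd}(t)=\sum_{d\geq 0}p_d(\odd)\,t^d=\sum_{d\geq 0}\frac{[W_{\graph,c}^d]_{z^0}}{d!}\,t^d=\int_{|z|=1}\exp\!\big(t\cdot W_{\graph,c}(z)\big)\,\frac{\dd z}{z},
\end{equation}
and summing the second against $\kappa^d$ gives $\widehat{\mathrm{G}}_{\odd}(\kappa)=\sum_{d\geq 0}[W_{\graph,c}^d]_{z^0}\kappa^d=\pi_{\widetilde W_{\graph,c}}(\kappa)$. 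This yields both displayed equalities.

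Finally I would pin down the range of validity: the series $\pi_{\widetilde W_{\graph,c}}(\kappa)=\sum_d[W_{\graph,c}^d]_{z^0}\kappa^d$ has radius of convergence $1/\limsup_d([W_{\graph,c}^d]_{z^0})^{1/d}$. By \cref{theorem:graphisdisc} the graph potential equals the Floer potential $m_0(L_{\graph,c})$, so this $\limsup$ is the invariant $T'_{\odd}$, which coincides with $T_{\odd}$ via the identification of $T'_X$ with the inverse radius of convergence of the regularized quantum period \cite{MR3536989,MR4384381} and equals $8g-8$ by Mu\~noz \cite{MR1695800}; hence both sides of the second identity are holomorphic on $|\kappa|<\tfrac{1}{8g-8}$ and, agreeing as power series, agree there, with the closed disk following from the fact that the non-negative integers $c_d(\odd)$ grow only polynomially relative to the exponential rate $(8g-8)^d$. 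The main point is that \emph{nothing} here is hard once \cref{corollary:equality-coefficients} and \cref{theorem:graphisdisc} are available: the first two paragraphs are a reindexing, and the only analytic ingredient is the radius-of-convergence computation, which is exactly the optimality statement $T'_{\odd}=T_{\odd}=8g-8$; the boundary case $|\kappa|=\tfrac{1}{8g-8}$ is the one place where a quantitative (rather than merely exponential) bound on the $c_d(\odd)$ — available from the regularity of the quantum differential equation — is invoked.
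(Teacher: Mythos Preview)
Your approach is essentially the same as the paper's: the paper states outright that the proposition is obtained by ``summing \eqref{equation:equality-coefficients} with weights $\frac{t^d}{d!}$ or $\kappa^d$'' and offers no further argument, so your first two paragraphs simply spell out that sentence in full, including the easy checks for $d=0,1$ and the identification of constant terms for $W_{\graph,c}$ versus $\widetilde W_{\graph,c}$.

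One small caution on your final paragraph: you justify the \emph{closed} disk $|\kappa|\le\tfrac{1}{8g-8}$ by appealing to regularity of the quantum differential equation, but in the paper that regularity is \cref{corollary:ode}, which is \emph{deduced from} the present proposition. This is not a genuine circularity---\cref{corollary:ode} only needs the equality as formal power series (or on the open disk), which you have already established---but as written your sentence reads as if the boundary case is an input rather than a consequence. The paper itself does not justify the boundary case separately; the asymptotics $\lim_k([W^{4k}]_{z^0})^{1/4k}=8g-8$ are noted independently in \cref{remark:asymptotics} via convex-geometry methods from \cite{MR4384381}, which would be the cleaner citation here.
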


\begin{corollary}
  \label{corollary:ode}
  The regularized quantum period
  is a solution of an ordinary differential equation
  with regular singularities.
\end{corollary}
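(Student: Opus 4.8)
The plan is to deduce the regularity of the regularized quantum period from the identification, established in \cref{proposition:equality-periods}, of $\widehat{\mathrm{G}}_{\odd}(\kappa)$ with the period series $\pi_{\widetilde{W}_{\graph,c}}(\kappa)=\sum_{d\geq 0}[\widetilde{W}_{\graph,c}^d]_{z^0}\,\kappa^d$ of the graph potential, valid as analytic germs at $\kappa=0$. Since the property of satisfying an ODE with regular singularities is preserved under analytic continuation, it suffices to produce such an ODE annihilating $\pi_{\widetilde{W}_{\graph,c}}$, and the natural candidate is the Picard--Fuchs equation attached to the graph potential, whose regularity is classical.

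Concretely, writing $W=\widetilde{W}_{\graph,c}$ and $n=3g-3=\#E$, I would first record the standard integral representation
\begin{equation}
  \pi_W(\kappa)=\frac{1}{(2\pi\ii)^n}\int_{|z_1|=\cdots=|z_n|=1}\frac{1}{1-\kappa W(z)}\,\dlog{z_1}\wedge\cdots\wedge\dlog{z_n},
\end{equation}
obtained by expanding $(1-\kappa W)^{-1}$ as a geometric series and extracting constant terms. For $\kappa\in\bP^1$ outside the finite set $\Sigma$ consisting of $0$, $\infty$, and the reciprocals of the nonzero critical values of $W$ on the torus $(\bC^\times)^n$, this realises $\pi_W(\kappa)$ as a flat section of the Gauss--Manin connection on the relative cohomology $\HH^n\bigl((\bC^\times)^n,\{1-\kappa W=0\}\bigr)$ of the pencil of affine hypersurfaces cut out by $1-\kappa W$ --- equivalently, as an oscillating-integral-type period of the Landau--Ginzburg model $W\colon(\bC^\times)^n\to\bC$ appearing in \cref{proposition:equality-periods}.

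Next I would invoke the regularity theorem for the Gauss--Manin connection of an algebraic family (Griffiths, Katz, Deligne; cf.\ the treatment of quantum periods of Fano varieties in \cite{MR3469127}): the Picard--Fuchs operator annihilating this period section is regular singular over all of $\bP^1_\kappa$, including at $\kappa=\infty$ and at the points of $\Sigma$. The monic operator of minimal order killing the germ of $\pi_W$ at $\kappa=0$ divides it in the Weyl algebra, hence is again regular singular; transporting it back along \cref{proposition:equality-periods} exhibits $\widehat{\mathrm{G}}_{\odd}$ as a solution of an ODE with regular singularities, which is the claim.

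The point requiring care --- and the reason this does not simply follow from the existence of the regularized quantum differential equation $\widehat{L}$, which a priori might be irregular at $\kappa=\infty$ --- is exactly that the Fourier--Laplace transform $\mathrm{G}_{\odd}\mapsto\widehat{\mathrm{G}}_{\odd}$ converts the irregular singularity of the quantum differential equation at $t=\infty$ into a regular one; \cref{proposition:equality-periods} makes this manifest by realising $\widehat{\mathrm{G}}_{\odd}$ as a genuine algebraic period. As anticipated in the discussion preceding the corollary, this also identifies the principal irreducible component of the regularized quantum connection of $\odd$ with a summand of the Picard--Fuchs system of $\widetilde{W}_{\graph,c}$. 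I expect the only nontrivial work beyond bookkeeping to be stating the regularity input in precisely the form needed here (for the Gauss--Manin connection of an open variety, or directly for the oscillating integrals), for which the references above are adequate.
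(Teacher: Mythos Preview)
Your proposal is correct and follows essentially the same route as the paper: use \cref{proposition:equality-periods} to identify $\widehat{\mathrm{G}}_{\odd}$ with the period $\pi_{\widetilde{W}_{\graph,c}}$ of a Laurent polynomial, then appeal to the Picard--Fuchs equation of that period and the regularity of the Gauss--Manin connection. The paper is terser---it cites \cite[\S3]{MR3469127} for the Picard--Fuchs operator and \cite[Theorem~4.5]{MR0498581} (Schmid) for regular singularities via the polarised variation of Hodge structures---whereas you spell out the integral representation and the minimal-operator argument, but the substance is the same.
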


\begin{proof}
  By \cref{proposition:equality-periods} we have the identification of the regularized quantum period with the period of the graph potential. As discussed in \cite[\S3]{MR3469127}, the period of a Laurent polynomial~$f$ satisfies an ordinary differential equation, given by the Picard--Fuchs operator. The local system of solutions is a summand of a polarised variation of Hodge structures, which by \cite[Theorem~4.5]{MR0498581} has regular singularities.
\end{proof}

\begin{remark}
  \label{remark:asymptotics}
  This also allows us to use methods of convex geometry and random walks
  to estimate the asymptotics of the coefficients,
  see e.g. \cite[Lemma 3.13]{MR4384381}. % GI
  In particular,
  the limit $\lim_{k\to\infty} ([W^{4k}]_{z^0})^{1/4k}$
  exists and is equal to $8(g-1) = T_{\odd}$.
\end{remark}

In \cref{table:quantum-periods-odd,table:quantum-periods-even}
we give the period sequences for the even and odd graph potentials.
In the odd case this in turn coincides with
the quantum period sequence for~$\odd$ by \cref{proposition:equality-periods}.
These numbers were computed using the method introduced in \cite{gp-tqft}
for computing periods of graph potentials using the graph potential TQFT.
This efficient method for computing periods from \cite{gp-tqft}
could be useful for algorithmically determining
the quantum differential equation for the quantum periods of~$\odd$.

In \cref{section:explicit-calculations}
we will give some explicit calculations for the quantum period sequence,
highlighting some of its patterns which one can read off from the tables.

\begin{remark}
  \label{remark:speculation-on-even}
  We have mostly focused on~$\odd$,
  and only briefly commented on some aspects of~$\even$ in \cref{remark:even-symplectic}.
  There are further alternative points of view on~$\even$,
  namely as a Poisson variety (by the work of Huebschmann, see e.g.~\cite{MR1324125})
  or as a stack,
  which could take care of the singularities
  present in~$\even$ for~$g\geq 3$,
  and the discrepancy between
  the period sequence of the uncolored graph potential for~$g=2$
  and the quantum period of~$\even\cong\bP^3$.
  Indeed, by \cite[Tables~1 and~2]{gp-tqft}
  which are recalled in \cref{table:quantum-periods-even}
  we have that the non-zero terms~$p_{4k}$ in
  the classical period sequence of the uncolored graph potential for~$g=2$
  are
  \begin{equation}
    1,384,645120,1513881600,4132896768000,\ldots
  \end{equation}
  whereas the non-zero terms~$p_{4k}$ in
  the regularised quantum period sequence for~$\mathbb{P}^3$ (see e.g.~\cite[\S1]{MR3470714}) are
  \begin{equation}
    1,24,2520,369600,63063000,\ldots
  \end{equation}
  We can make two comments:
  \begin{itemize}
    \item To fix the discrepancy for~$g=2$,
      it suffices to rescale the graph potential
      described in \cite[Example~2.6]{gp-tqft}
      by~$1/2$ to get the correct period sequence.

    \item For $g\geq 3$ and a connected graph $\graph$ without separating edges
      the Laurent polynomial $W_\graph$ is supported in the vertices of its Newton polytope.
      There are $8g-8$ non-vanishing coefficients, each equal to one,
      which are in bijection with facets of $\Delta_\graph$
      and with irreducible torus-invariant Weil divisors on $X_0$.
      A fiber of a moment map bounds $8g-8$ holomorphic disks,
      each transversally intersecting exactly one of toric divisors,
      and if $X_0$ has a small resolution of singularities
      this shows that there are no other holomorphic disks
      of Maslov index $2$.
  \end{itemize}
  We observe that the graph potential~$W_{\graph,0}$
  reflects some aspects of the symplectic geometry of the singular variety~$\even$.
  Since Fukaya categories of singular varieties are not so well established,
  it seems reasonable to start with categories of matrix factorizations
  of the graph potentials $W_{\graph,0}$
  \emph{instead of} the not so well-defined categories $\Fuk{\even}$,
  in order to gain further insight into their (conjecturally common) properties.
  This gives further evidence for the speculation on the Atiyah--Floer conjecture
  and Donaldson--Floer theories in \cite[\S1.3]{gp-tqft}.
\end{remark}

\appendix
\section{Analyzing the singularities}
\label{section:singularities}
Let~$(\graph',c)$ be the colored trivalent graph constructed from a trivalent graph~$\graph$ with at most one half-edge in \cref{subsection:manon}. We wish to study the singularities of the toric variety~$\manontoric{\graph}\cong\gptoric{\graph'}{c}{M}$.

Before we discuss the general picture, let us discuss what happens for the toric degenerations of~$\odd$ and~$\even$ for~$g=2$. There are two graphs, and as will be clear from \cref{theorem:terminal-criterion} and \cref{remark:small-resolution-check}, the distinction between the behavior of the toric degenerations for~$\odd$ depending on the choice of graph is representative of what happens in general.

\begin{figure}[t!]
  \centering

  \begin{subfigure}[b]{\textwidth}
    \centering
    \begin{tikzpicture}[scale=1.75]
      \node[vertex] (A) at (0,0) {};
      \node[half-vertex] at (A) {};
      \node[vertex] (B) at (1,0) {};
      \node[half-vertex] at (B) {};

      \draw (A) +(0,-0.1) node [below] {$1$};
      \draw (B) +(0,-0.1) node [below] {$2$};

      \draw (A) edge [bend left]  node [above]      {$a$} (B);
      \draw (A) edge              node [fill=white] {$b$} (B);
      \draw (A) edge [bend right] node [below]      {$c$} (B);
    \end{tikzpicture}
    \caption{Theta graph}
    \label{figure:theta-graph}
  \end{subfigure}

  \begin{subfigure}[b]{\textwidth}
    \centering
    \begin{tikzpicture}[scale=1.75]
      \node[vertex] (A) at (0,0) {};
      \node[half-vertex] at (A) {};
      \node[vertex] (B) at (1,0) {};
      \node[half-vertex] at (B) {};

      \draw (A) +(0,-0.1) node [below] {$1$};
      \draw (B) +(0,-0.1) node [below] {$2$};

      \draw (A) edge [loop left]  node [left]       {$b$} (A);
      \draw (A) edge              node [fill=white] {$a$} (B);
      \draw (B) edge [loop right] node [right]      {$c$} (B);
    \end{tikzpicture}
    \caption{Dumbbell graph}
    \label{figure:dumbbell-graph}
  \end{subfigure}

  \caption{Trivalent genus 2 graphs}
  \label{figure:trivalent-g-2}
\end{figure}
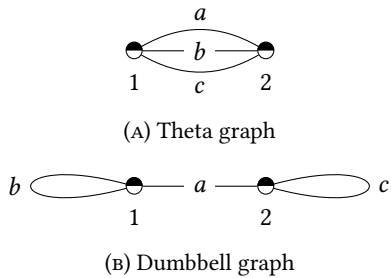
\subsection{Terminal singularities}\label{sec:terminal}
Recall from \cite[Proposition~11.4.12(ii)]{MR2810322} that for a~$\bQ$\dash Gorenstein toric variety, having terminal singularities is equivalent to the only non-zero lattice points in the fan polytope being its vertices. We can use this criterion, as by \cite[Theorem~12]{MR4025429} the homogeneous coordinate rings of the toric degenerations~$\manontoric{\graph}$ are all arithmetically Gorenstein and in particular the toric degenerations themselves are Gorenstein.

\begin{example}
  \label{example:g-2-singularities}
  If~$\graph$ is the Theta graph from \cref{figure:theta-graph} and there is one colored vertex, then the toric variety we obtain is the singular intersection of quadrics in~$\bP^5$ given by~$x_0x_5-x_1x_2=0$ and~$x_3x_4-x_1x_2=0$ \cite{MR3339536}.

  The  polar dual of the moment polytope is a cube with vertices~$(\pm 1, \pm 1, \pm 1).$ This variety has~6~ordinary double points and hence has a small resolution. In particular, the variety also has terminal singularities. This case is studied by Galkin \cite{Galkin-phd} and also by Nishinou--Nohara--Ueda \cite{MR3339536, MR2879356}. The results in this section and the next can be seen as a generalization from~$g=2$ to~$g\geq 3$.

  On the other hand, if~$\graph$ is the dumbbell graph from \cref{figure:dumbbell-graph} with one colored vertex, the toric variety is given by the equations~$x^2 = zw$ and $zw = uv$ in $\bP^5$ \cite{MR3339536}. The dual of the moment polytope of the toric variety is the convex hull of the points
  \begin{equation}
    (1,2,0),(1,-2,0),(-1,0,0),(-1,0,-2),(1,0,0),(-1,0,2).
  \end{equation}
  The polytope contains both the lattice points~$(-1,0,-2)$ and~$(-1,0,2)$, but then it also contains~$(-1,0,0)$, and hence it cannot be terminal. The distinction between the toric varieties obtained from these two graphs is explained by \cref{theorem:terminal-criterion}.
\end{example}
The case of~$\even$ for~$g=2$ is a bit pathological, as we by \cite[Theorem 1]{MR0242185} have an isomorphism~$\even\cong\bP^3$. Moreover the locus of semistable bundles up to~$\mathrm{S}$-equivalence is identified with the Kummer surface in~$\bP^3$.
\begin{example}
  If~$\graph$ is the Theta graph and there are no colored vertices, then the toric variety is just~$\bP^3$. Clearly it is smooth and hence terminal.

  If~$\graph$ is the dumbbell graph and there are no colored vertices, then the dual of the moment polytope is the convex hull of the points
  \begin{equation}
    (1,2,0),(1,-2,0),(-1,0,0),(1,0,2),(1,0,0),(1,0,-2).
  \end{equation}
  The polytope contains the lattice points~$(1,0,0)$ which is exactly the half sum of the lattice points~$(1,0,2)$ and~$(1,0,-2)$. Hence the toric variety is non-terminal.
\end{example}

Then the dichotomy observed for~$g=2$ from \cref{example:g-2-singularities} in the case of 1 colored vertex is illustrative for what happens for~$g\geq 3$.
\begin{theorem}
  \label{theorem:terminal-criterion}
  Let~$(\graph,c)$ be a colored  trivalent graph of genus~$g\geq 3$. The toric variety~$X_{P_{\graph,c}, M_{\graph}}$ has terminal singularities if and only if~$\graph$ has no separating edges.
\end{theorem}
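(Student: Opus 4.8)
The plan is to apply the criterion recalled at the start of \cref{sec:terminal}: since the toric degenerations $\manontoric{\graph}\cong X_{P_{\graph,c},M_{\graph}}$ are Gorenstein by \cite[Theorem~12]{MR4025429}, $X_{P_{\graph,c},M_{\graph}}$ is terminal if and only if the only nonzero lattice points of the fan polytope $Q:=P^{\circ}_{\graph,c}$ in $N_{\graph}$ are its vertices. By \cref{lemma:polytope-description} we have $Q=\operatorname{conv}\{p(v,s):v\in V,\ s\in C_{v,c}\}$, and by \cite[Corollary~2.9]{gp-tqft} the isomorphism class of $X_{P_{\graph,c},M_{\graph}}$ depends only on the parity of the number of colored vertices, so we may assume $c$ has at most one colored vertex. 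I will use two elementary facts. First, unwinding the definitions of \cref{subsection:graph-potentials} one finds $N_{\graph}=\operatorname{im}\bigl(\CC^0(\graph,\bZ)\to\CC^1(\graph,\bZ)\bigr)+2\widetilde{N}_{\graph}$, so $w\in N_{\graph}$ if and only if $\operatorname{supp}(w\bmod 2)$ is an edge cut of $\graph$; in particular $x_e\in N_{\graph}$ exactly when $e$ is a separating edge. Second, each $p(v,s)$ has $\ell^1$-norm at most $3$ in the edge coordinates on $\widetilde{N}_{\graph}\cong\bZ^{\#E}$, with equality when $v$ carries no loop; hence $\|w\|_1\le 3$ for every $w\in Q$ by convexity, and $Q\subseteq[-1,1]^{\#E}$ when $\graph$ is loopless.

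For the ``only if'' direction, suppose $b$ is a separating edge, with endpoints $v_1\ne v_2$, and choose $v\in\{v_1,v_2\}$ uncolored. Picking $s,s'\in C_{v,c}$ that agree on the slot of $b$ (both equal to $0$) and disagree on the other two slots of $v$, one checks directly from \eqref{equation:vertices} that $\tfrac12 p(v,s)+\tfrac12 p(v,s')=x_b$ (with the obvious modification if $v$ carries a loop). Thus $x_b$ is a nonzero point of $Q$, namely the midpoint of the segment between the distinct points $p(v,s)$ and $p(v,s')$, hence not a vertex of $Q$; and $x_b\in N_{\graph}$ by the first fact, precisely because $b$ separates. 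So $X_{P_{\graph,c},M_{\graph}}$ is not terminal.

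For the ``if'' direction, assume $\graph$ has no separating edge. Then $\graph$ is loopless (a loop at $v$ makes the third edge at $v$ separating, as $\#V=2g-2\ge4$), so $Q\subseteq[-1,1]^{\#E}$ and every $p(v,s)$ has exactly three nonzero coordinates, all $\pm1$, indexed by the edges incident to $v$. Let $w\in Q\cap N_{\graph}$ with $w\ne 0$; then $\|w\|_1\in\{1,2,3\}$. If $\|w\|_1=1$, then $w=\pm x_e$ and the first fact would make $e$ separating --- impossible. If $\|w\|_1=2$, then either $w=\pm 2x_e$, contradicting $Q\subseteq[-1,1]^{\#E}$, or $w=\pm x_{e_1}\pm x_{e_2}$ with $e_1\ne e_2$; in the latter case the first fact forces $\{e_1,e_2\}$ to be an edge cut, and the absence of separating edges then forces $e_1$ and $e_2$ to share no vertex (if they met at a vertex $a$, the third edge at $a$ would be separating, since $\#V\ge4$ keeps both sides of the $2$-edge cut nonempty). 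Taking the functional $u$ equal to $\mp1$ on $e_1,e_2$ with signs opposite to those appearing in $w$, and $0$ on every other edge, one checks against \eqref{equation:half-spaces-general-i} that $u\in P_{\graph,c}$ --- each vertex meets at most one of $e_1,e_2$ --- while $\langle w,u\rangle=-2<-1$, so $w\notin Q$: contradiction. Finally, if $\|w\|_1=3$, then since $Q\subseteq[-1,1]^{\#E}$ the point $w$ has exactly three nonzero coordinates, each $\pm1$; writing $w=\sum\lambda_{v,s}p(v,s)$ as a convex combination, equality in $\|w\|_1\le\sum\lambda_{v,s}\|p(v,s)\|_1=3$ forces the occurring $p(v,s)$ to be sign-compatible coordinatewise, whence (both sets having three elements) the set of edges incident to each occurring $v$ coincides with $\operatorname{supp}(w)$. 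Two distinct such vertices would produce a genus-$2$ theta subgraph, impossible for a connected graph of genus $g\ge3$, so a single $v_0$ occurs; then $w_e=\pm1$ for each edge $e$ incident to $v_0$ forces all the $s$ to coincide, i.e.\ $w=p(v_0,s_0)$ is a vertex. Having covered all cases, $X_{P_{\graph,c},M_{\graph}}$ is terminal.

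The step I expect to be the real obstacle is the case $\|w\|_1=2$ of the ``if'' direction: it is the one place where the no-separating-edge hypothesis is used a second time, namely to force the two cut edges to be vertex-disjoint, which is exactly what makes the test functional $u$ admissible for $P_{\graph,c}$; it also requires combining the mod-$2$ description of $N_{\graph}$ with a hands-on verification of the defining inequalities \eqref{equation:half-spaces-general-i}. By contrast the cases $\|w\|_1\le1$ and $\|w\|_1=3$ follow cleanly from the $\ell^1$-bound together with the edge-cut criterion for membership in $N_{\graph}$.
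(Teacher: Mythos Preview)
Your proof is correct. Both you and the paper reduce to the terminality criterion and analyze the lattice points of $Q=P^\circ_{\graph,c}$, but the arguments diverge in two places. The paper sets up auxiliary lemmas about the ambient polytope $\Pi_n=\operatorname{conv}\{\pm e_i\pm e_j\pm e_k\}$ (\cref{lemma:outside-convex-hull,lemma:studying-Pi-n}) to constrain the possible lattice points, whereas you get the same constraints directly from the $\ell^1$-bound $\|w\|_1\le 3$ and the cube bound $Q\subseteq[-1,1]^{\#E}$. More substantively, in the $\|w\|_1=2$ case the paper argues that $e_i+e_j$ could only arise from a pair $p(v,s),p(v',s')$ sharing all three incident edges, forcing a theta subgraph; your route instead exploits the mod-$2$ description of $N_\graph$ to recognize $\{e_1,e_2\}$ as an edge cut, deduces vertex-disjointness from the no-bridge hypothesis, and then exhibits an explicit point of $P_{\graph,c}$ witnessing $w\notin Q$. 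Your version is more self-contained (no auxiliary $\Pi_n$) and makes the role of $N_\graph$ versus $\widetilde{N}_\graph$ explicit in the two-edge case; the paper's version has the advantage of packaging the lattice-point enumeration into reusable lemmas and yields the slightly stronger \cref{lemma:grandcentral}, which also describes the extra lattice points when bridges are present.
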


Before we prove this statement we give some preliminary lemmas. The following two lemmas work for an arbitrary integer~$n$, even though we are mostly interested in the case~$n=3g-3$.

\begin{lemma}
\label{lemma:outside-convex-hull}
Let~$n\geq 3$ and consider~$\bZ^n$.
Consider the set~$S$ of~$2^3\binom{n}{3}$ points of the form
$\pm e_i\pm e_j\pm e_k$ for~$1\leq i,j,k\leq n$ distinct.
Let~$p=s_ie_i+s_je_j+s_ke_k$ be an element of~$S$,
then~$p$ does not lie in the convex hull of~$S\setminus\{p\}$.
\end{lemma}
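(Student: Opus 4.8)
The strategy is to show that $p$ is actually a \emph{vertex} of $\operatorname{conv}(S)$, by producing an explicit linear functional that is maximized on $S$ uniquely at $p$. Write $p = s_i e_i + s_j e_j + s_k e_k$ with $s_i,s_j,s_k \in \{\pm 1\}$ and $i,j,k$ distinct. The plan is to take the functional
\begin{equation}
  \ell(x_1,\dots,x_n) := s_i x_i + s_j x_j + s_k x_k,
\end{equation}
i.e.\ the dot product against $p$ itself. Then $\ell(p) = s_i^2 + s_j^2 + s_k^2 = 3$.

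Next I would bound $\ell$ on the rest of $S$. For an arbitrary $q = \varepsilon_a e_a + \varepsilon_b e_b + \varepsilon_c e_c \in S$ (with $a,b,c$ distinct, $\varepsilon_\bullet \in \{\pm 1\}$) one has
\begin{equation}
  \ell(q) = \sum_{t \in \{a,b,c\} \cap \{i,j,k\}} \varepsilon_t s_t,
\end{equation}
since the coefficients of $\ell$ vanish outside coordinates $i,j,k$. Each summand is $\pm 1$, so $\ell(q)$ is an integer with $\ell(q) \le \#(\{a,b,c\}\cap\{i,j,k\}) \le 3$, and equality $\ell(q)=3$ forces $\{a,b,c\}=\{i,j,k\}$ together with $\varepsilon_t = s_t$ for all $t\in\{i,j,k\}$, i.e.\ $q = p$. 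Hence for every $q \in S\setminus\{p\}$ we get $\ell(q) \le 2 < 3 = \ell(p)$.

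Finally, if $p = \sum_{q\in S\setminus\{p\}} \lambda_q q$ were a convex combination ($\lambda_q \ge 0$, $\sum \lambda_q = 1$), applying $\ell$ would give $3 = \ell(p) = \sum \lambda_q \ell(q) \le 2$, a contradiction; so $p \notin \operatorname{conv}(S\setminus\{p\})$. I do not anticipate a genuine obstacle here: the only thing to be a little careful about is the bookkeeping showing that $\ell(q)=3$ pins down both the support $\{a,b,c\}$ and all three signs, which is exactly what makes $p$ the unique maximizer. (The hypothesis $n\ge 3$ is used only to guarantee that such triples $\{i,j,k\}$ exist, so that $S$ is nonempty and the statement is non-vacuous.)
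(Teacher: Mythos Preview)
Your proof is correct and is essentially identical to the paper's: the paper also uses the linear functional $h = s_i e_i^\vee + s_j e_j^\vee + s_k e_k^\vee$ (your $\ell$), observes that $h(p)=3$ while $h(q)<3$ for all $q\in S\setminus\{p\}$, and concludes. You simply spell out in more detail why $\ell(q)\le 2$ for $q\neq p$, which the paper leaves implicit.
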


\begin{proof}
  Consider the function~$h=s_i e_i^\vee+s_j e_j^\vee+s_k e_k^\vee$. We have that~$h(s_ie_i+s_je_j+s_ke_k)=3$, and~$h(q)<3$ for~$q\in S\setminus\{p\}$.
\end{proof}

We will denote~$\Pi_n$ the polytope given by the convex hull of the set~$S$ from \cref{lemma:outside-convex-hull}.

\begin{lemma}
  \label{lemma:studying-Pi-n}
  With respect to the standard lattice~$\bZ^n$ the polytope~$\Pi_n$ contains precisely the following lattice points:
  \begin{itemize}
    \item $0\in\Pi_n$
    \item $\pm e_i\in\Pi_n$ for~$1\leq i\leq n$
    \item $\pm e_i\pm e_j\in\Pi_n$ for~$1\leq i,j\leq n$ distinct
    \item $\pm e_i\pm e_j\pm e_k\in\Pi_n$ for~$1\leq i,j,k\leq n$ distinct.
  \end{itemize}
  Moreover,
  \begin{itemize}
    \item for all $1\leq i\leq n$ the lattice points $\pm e_i$ lie in the convex hull of~$\pm e_i\pm e_j\pm e_k$ for~$1\leq j,k\leq n$ distinct and~$i\neq j,k$;
    \item for all $1\leq i,j\leq n$ distinct the lattice points $\pm e_i\pm e_j$ lie in the convex hull of~$\pm e_i\pm e_j\pm e_k$ for~$1\leq k\leq n$ such that~$k\neq i,j$.
  \end{itemize}
\end{lemma}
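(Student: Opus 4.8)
The plan is to prove the two halves of \cref{lemma:studying-Pi-n} separately: first identify all lattice points of $\Pi_n$, then establish the two containment assertions. Throughout I would exploit the symmetry of $\Pi_n$ under the signed permutation group (the hyperoctahedral group $B_n$), which acts transitively on the vertices and lets me reduce each verification to a single representative.

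\emph{Listing the lattice points.} First I would observe that all the claimed points do lie in $\Pi_n$: the vertices $\pm e_i\pm e_j\pm e_k$ are in by definition, and the others follow from the second half of the lemma (or can be exhibited directly as explicit convex combinations, e.g.\ $e_i = \tfrac12(e_i+e_j+e_k) + \tfrac12(e_i-e_j-e_k)$ and $e_i+e_j = \tfrac12(e_i+e_j+e_k)+\tfrac12(e_i+e_j-e_k)$, and $0 = \tfrac12(e_i+e_j+e_k)+\tfrac12(-e_i-e_j-e_k)$). For the converse — that there are no other lattice points — I would use a facet description of $\Pi_n$. The natural guess is that the facets are cut out by the inequalities $\langle \varepsilon, x\rangle \le 3$ where $\varepsilon\in\{\pm1\}^n$, together with $|x_i|\le 1$ for each coordinate (since $e_i$ is the ``most extreme'' point in the $i$th coordinate direction). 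So I would claim
\begin{equation}
  \Pi_n = \Bigl\{ x\in\bR^n \ \Big|\ |x_i|\le 1 \text{ for all } i,\ \sum_i |x_i| \le 3 \Bigr\}.
\end{equation}
Indeed the right-hand side is a lattice polytope whose vertices are exactly $\pm e_i\pm e_j\pm e_k$ (a vertex must have each $|x_i|\in\{0,1\}$ and $\sum|x_i|=3$), and it contains $S$, so it contains $\Pi_n$; conversely any $x$ satisfying these inequalities is a convex combination of points of $S$, which one sees by induction on the support. Granting this description, an integer point $x$ has each $x_i\in\{-1,0,1\}$ and $\sum|x_i|\le 3$, i.e.\ at most three nonzero coordinates each equal to $\pm1$ — precisely the four families listed.

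\emph{The containment assertions.} By the $B_n$-symmetry it suffices to check one instance of each. For the first: $e_i$ lies in the convex hull of the points $e_i\pm e_j\pm e_k$ (for fixed distinct $j,k\ne i$) because $e_i = \tfrac14\bigl[(e_i+e_j+e_k)+(e_i+e_j-e_k)+(e_i-e_j+e_k)+(e_i-e_j-e_k)\bigr]$, the average of the four vertices of a square face. For the second: $e_i+e_j$ is the midpoint $\tfrac12\bigl[(e_i+e_j+e_k)+(e_i+e_j-e_k)\bigr]$ of an edge, for any $k\ne i,j$. The signs are handled uniformly by applying the relevant sign change in $B_n$.

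I do not anticipate a serious obstacle here; the only point requiring a little care is the facet description of $\Pi_n$, and even that can be bypassed entirely if one is willing to argue directly: given an integer point $x\in\Pi_n$, write it as a convex combination $x=\sum \lambda_v v$ of vertices $v\in S$; projecting onto any single coordinate shows $|x_i|\le 1$, and a short argument (e.g.\ looking at $\sum_i |x_i|$, which is a convex function bounded by its value $3$ on each vertex) shows $\sum_i|x_i|\le 3$, giving the same conclusion. So the main work is just the bookkeeping of which sign/support patterns survive these two constraints, which is routine. The explicit convex combinations above then furnish both the membership of the extra lattice points and the two containment statements in one stroke.
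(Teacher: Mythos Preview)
Your argument is correct and follows the same underlying idea as the paper's proof: bound a convex function on~$\Pi_n$ by its value on the vertices, then read off which integer points survive. The only cosmetic difference is the choice of function: the paper uses the single convex function $h(x)=\sum_i x_i^2$, observing that $h\equiv 3$ on the vertex set forces $h\leq 3$ on~$\Pi_n$, so any integer point has at most three nonzero coordinates, each $\pm1$; you instead split this into the linear bounds $|x_i|\leq 1$ together with the convex bound $\sum_i|x_i|\leq 3$, which yields the same conclusion. Your explicit convex combinations for $\pm e_i$ and $\pm e_i\pm e_j$ are exactly what the paper leaves as ``immediate'', and your facet description of~$\Pi_n$ is a pleasant bonus not present in the paper.
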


\begin{proof}
  The function~$h=\sum_{i=1}^n(e_i^\vee)^2$ is convex, and~$h(\pm e_i\pm e_j\pm e_k)=3$, hence all these lattice points lie on a sphere of radius~1 and they cannot be contained in each other convex hulls. This function takes on the values~$0,1,2,3$ on the lattice points listed in the statement of the lemma.

  To see that there are no other, observe that for two vectors~$u,u'$ whose coefficients in the standard basis belong to~$\{-1,0,1\}$, we have that~$(u',u)\leq(u,u)$ in the standard bilinear pairing. If we write~$u=\sum_{i=1}^{2^3\binom{n}{3}}\alpha_iv_i$ where~$v_i$ are the vertices of~$\Pi_n$, such that~$\alpha_i\geq 0$ for all~$i$ and~$\sum_{i=1}^{2^3\binom{n}{3}}\alpha_i=1$, we can apply this observation to~$u'=e_i$ for all~$i$ and we get
  \begin{equation}
    (u,u)=\sum_{i=1}^{2^3\binom{n}{3}}\alpha_i(v_i,u)\leq\sum_{i=1}^{2^3\binom{n}{3}}\alpha_i(u,u)=(u,u)
  \end{equation}
  hence either~$\alpha_i=0$ or~$(v_i,u)=(u,u)$.

  The final claim is immediate.
\end{proof}

We need one more lemma, giving a homological interpretation to the notion of a separating edge, using the notation of \cref{subsection:graph-potentials}.
\begin{lemma}
\label{lemma:separating-characterization}
Let~$\graph$ be a trivalent graph.
Let~$e\in E$ be an edge,
and consider the associated indicator function~$e^\vee\in\CC^1(\graph,\bZ)=\widetilde{N}_\graph$. The vectors $e^{\vee}$ and $-e^{\vee}$ are always inside the polytope. Moreover~$e^\vee\in N_\graph\subseteq\widetilde{N}_\graph$ if and only if~$e$ is separating.
\end{lemma}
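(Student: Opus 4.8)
The plan is to handle the two assertions in turn, using the description of the fan polytope $P^{\circ}_{\graph,c}$ from \cref{lemma:polytope-description} as the convex hull of the points $p(v,s)$, together with the reduction modulo~$2$ of the lattice $N_\graph$ inside $\widetilde{N}_\graph$.

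\emph{Containment in the polytope.} Fix a vertex $v$ incident to $e$ and express $\pm e^{\vee}$ as a convex combination of points $p(v,s)$ with $s\in C_{v,c}$. If $e$ is not a loop, let $f,g$ be the other two edge-slots at $v$. For a suitable sign vector $s$ with $s_{e}=0$ one has $p(v,s)=e^{\vee}+\epsilon_{f}f^{\vee}+\epsilon_{g}g^{\vee}$ for some signs $\epsilon_{f},\epsilon_{g}$; flipping the signs on $f$ and $g$ preserves the parity constraint $s_{e}+s_{f}+s_{g}=c(v)$ and yields the point $e^{\vee}-\epsilon_{f}f^{\vee}-\epsilon_{g}g^{\vee}$, whose average with the previous one is $e^{\vee}$. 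Taking $s_{e}=1$ instead produces $-e^{\vee}$. If $e$ is a loop at $v$ with remaining edge-slot $f$, then among the $p(v,s)$ one finds the three points $2e^{\vee}+(-1)^{c(v)}f^{\vee}$, $-2e^{\vee}+(-1)^{c(v)}f^{\vee}$ and $-(-1)^{c(v)}f^{\vee}$, and $e^{\vee}$ (respectively $-e^{\vee}$) is the average of the first (respectively the second) with the third. In every case $\pm e^{\vee}$ lies in the convex hull, hence in $P^{\circ}_{\graph,c}$.

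\emph{The criterion.} Each $N_v$ contains $2x_i$ for every one of its three slots (for instance $2x_{k}=(x_{i}+x_{j}+x_{k})-(x_{i}+x_{j}-x_{k})$), so $2\widetilde{N}_\graph\subseteq N_\graph$ and reduction modulo~$2$ identifies $\widetilde{N}_\graph/N_\graph$ with $\CC^{1}(\graph,\bF_{2})/\overline{N}_\graph$, where $\overline{N}_\graph$ is the image of $N_\graph$. Every generator $\pm x_{i}\pm x_{j}\pm x_{k}$ of $N_{v}$ reduces to $x_{i}+x_{j}+x_{k}$; written in terms of the edges of $\graph$ this is $\sum_{f\ni v,\ f\text{ not a loop}}f^{\vee}$ (a loop at $v$ occupies two of the three slots and contributes $f^{\vee}+f^{\vee}=0$), which is precisely the coboundary in $\CC^{1}(\graph,\bF_{2})$ of the $0$-cochain dual to $v$. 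Hence $\overline{N}_v$ is the $\bF_{2}$-line these span, and $\overline{N}_\graph$ is the span of all of them, i.e.\ the space $B^{1}(\graph,\bF_{2})$ of $\bF_{2}$-valued coboundaries. Therefore $e^{\vee}=x_{e}$ lies in $N_\graph$ if and only if $x_{e}\in B^{1}(\graph,\bF_{2})$. Under the perfect pairing $\CC^{1}(\graph,\bF_{2})\times\CC_{1}(\graph,\bF_{2})\to\bF_{2}$ one has $B^{1}(\graph,\bF_{2})=Z_{1}(\graph,\bF_{2})^{\perp}$, and $\langle x_{e},z\rangle$ equals the $e$-coefficient of the $1$-cycle $z$; thus $x_{e}\in B^{1}(\graph,\bF_{2})$ exactly when no $1$-cycle uses $e$, i.e.\ exactly when $e$ is separating. (Alternatively one can invoke the isomorphism $A_\graph\cong\HH_{1}(\graph,\bF_{2})$ of \cite[Lemma~2.1]{gp-tqft} and trace through the pairing.)

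The computations are routine; the only places needing a little care are the degenerate incidence pattern at a vertex carrying a loop — both when exhibiting $\pm e^{\vee}$ as a convex combination and when checking that the mod~$2$ reduction of $N_\graph$ is exactly $B^{1}(\graph,\bF_{2})$ — together with the standard fact that coboundaries form the annihilator of the cycle space.
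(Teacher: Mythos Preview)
Your argument is correct. For the biconditional you and the paper both pass through the inclusion $2\widetilde{N}_\graph\subseteq N_\graph$ and identify the image of $N_\graph$ modulo~$2$ with the coboundaries $\dd\CC^{0}(\graph,\bF_{2})$; the ``only if'' direction (non-separating $\Rightarrow e^{\vee}\notin N_\graph$) is then the same in both: evaluate on a cycle through $e$. The difference is in the ``if'' direction. The paper exhibits $e^{\vee}$ directly as an integral sum $\sum_{v\in\graph'}p_{v,s(v,o)}$ of generators of $N_\graph$, where $\graph'$ is one of the two components obtained by removing $e$ and the signs $s(v,o)$ come from an auxiliary orientation; telescoping leaves only the contribution at the cut edge. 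You instead invoke the duality $B^{1}(\graph,\bF_{2})=Z_{1}(\graph,\bF_{2})^{\perp}$ and observe that ``no cycle uses $e$'' is exactly the definition of separating. Your route is more uniform (both implications fall out of one identification) and avoids choosing an orientation; the paper's route is more constructive, giving an explicit preimage in $N_\graph$. You also supply a proof of the first assertion---that $\pm e^{\vee}$ lie in the polytope for \emph{every} edge, including loops---by averaging two appropriate vertices $p(v,s)$; the paper's written proof does not address this clause at all.
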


\begin{proof}
  Assume that~$e$ is separating, i.e.~by removing it we get two connected components which we will denote~$\graph'$ and~$\graph''$. Choose an orientation of the edges of~$\graph$, and assign~$1$ to an incoming half-edge, and~$-1$ to an outgoing half-edge. For~$v\in\graph'$ we consider the sublattice~$N_v$, and in there consider the sum
  \begin{equation}
    \label{equation:lattice-element}
    \sum_{v\in \graph'}p_{v,s(v,o)} \in N_{\graph}
  \end{equation}
  where~$o$ denotes the orientation scheme we choose on the graph~$\graph$,~$s(v,0) \in\mathrm{C}_1(\graph_v,\bF_2)$ defined by~$o$ and~$p_{v,s(v,0)}$ be as in~\eqref{equation:vertices}. By construction the sum is, up to a sign, equal to~$e^{\vee}$, hence~$e^\vee\in N_\graph$.

  Assume that~$e^\vee\in N_\graph$, and for the sake of contradiction assume that~$e$ is non-separating. Then there exists a cycle~$Z$ in~$\graph$ (without repetition of vertices), defining an element of~$\CC_1(\graph,\bZ)$ by assigning~1~to all edges of the graph.

  On the other hand, there exists a chain of inclusions
  \begin{equation}
    2\widetilde{N}_\graph\subseteq N_\graph\subseteq\widetilde{N}_\graph
  \end{equation}
  induced by the identity~$\pm2x_i=(\pm x_i\pm x_j\pm x_k)+(\pm x_i+\mp x_j\mp x_k)$, where the~$x_i$'s are as in \cref{subsection:graph-potentials}. These induce an isomorphism
  \begin{equation}
    \widetilde{N}_\graph/2\widetilde{N}_\graph\cong\CC^1(\graph,\bZ/2\bZ)
  \end{equation}
  which then induces an isomorphism
  \begin{equation}
    N_\graph/2\widetilde{N}_\graph\cong\dd(\CC^0(\graph,\bZ/2\bZ)) \\
  \end{equation}
  For~$e^\vee$ to be in~$N_\graph$ (and not just~$\widetilde{N}_\graph$) it must evaluate to zero on cycles since
  \begin{equation}
    N_\graph/2\widetilde{N}_\graph \cong\dd(\CC^0(\graph,\bZ/2\bZ)).
  \end{equation}
  But by construction~$e^\vee(Z)$ is odd. This contradiction shows that~$e^\vee$ cannot be an element of~$N_\graph$.
\end{proof}

\begin{proof}[Proof of \cref{theorem:terminal-criterion}]We give the following general Lemma describing the lattice points of the Newton polytope of a graph potential. The proof of \cref{theorem:terminal-criterion} is an immediate corollary.
\end{proof}
\begin{lemma}\label{lemma:grandcentral}
Let $\gamma,c$ be a  connected trivalent colored graph of genus $g>2$ and consider the Newton polytope~$P^\circ_{\gamma,c}$of the graph potential. The non-zero integer points are the following:
\begin{itemize}
	\item rays (vertices of the polytope)
	\begin{itemize}
		\item with every non-loop vertex of the graph~$\gamma$ there are 4 associated rays.
		\item with every loop vertex of graph~$\gamma$ there are 2 associated rays.
		\end{itemize}
	\item non-rays lattice points of the form $e_b$ and $-e_b$ associated to every separating edge $b$ in the graph.
\end{itemize}
In particular total number of rays equals $8g-8-2\#\text{loops}$ and the number of lattice points other than zero and rays is $2\#\text{bridges}$.
	\end{lemma}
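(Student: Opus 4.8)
The plan is to read off the integer points of the fan polytope $P^{\circ}_{\gamma,c}$ from its description as the convex hull of the points $p(v,s)$ with $v\in V$ and $s\in C_{v,c}$ (\cref{lemma:polytope-description}), combined with the three preparatory lemmas; the terminality criterion \cref{theorem:terminal-criterion} then follows at once, since for a Gorenstein toric variety terminality is equivalent to the fan polytope having no nonzero integer point besides its vertices, i.e.\ to the absence of bridges. First I would sort the generators $p(v,s)$. For a vertex $v$ carrying no loop, with pairwise distinct incident edges $i,j,k$, the four vectors $p(v,s)$ (one per sign pattern with $s_i+s_j+s_k\equiv c(v)$) are of \emph{simplex type}: each has precisely the nonzero coordinates $(-1)^{s_i},(-1)^{s_j},(-1)^{s_k}$ in positions $i,j,k$. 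For a vertex $v$ carrying a loop $\ell$, whose only other incident edge is $e$, the exponent $(-1)^{s_i}+(-1)^{s_j}$ of $x_\ell$ collapses the four patterns to the three distinct vectors $\pm 2e_\ell^\vee+\epsilon e_e^\vee$ (the two \emph{loop rays}, with $\epsilon=(-1)^{c(v)}$) and $-\epsilon e_e^\vee$; such an $e$ is automatically a bridge, and since $g>2$ its other endpoint $w$ is not a loop vertex.

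Next I would show that the vertices of $P^{\circ}_{\gamma,c}$ are exactly the simplex-type generators together with the loop rays. For a simplex-type $p(v,s)$ one uses, as in \cref{lemma:outside-convex-hull}, the linear functional sending $u$ to $\sum_{a\in\{i,j,k\}}(-1)^{s_a}u_a$: it takes the value $3$ at $p(v,s)$ and is $\le 2$ on every other generator, because a competing value $3$ would require another trivalent vertex of $\gamma$ incident to all of $i,j,k$, which is impossible for $g>2$. For a loop ray $\pm 2e_\ell^\vee+\epsilon e_e^\vee$ the coordinate functional $u\mapsto\pm u_\ell$ separates it from all other generators, as $u_\ell$ vanishes on every generator not attached to $v$. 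Conversely, the remaining generator $-\epsilon e_e^\vee$ of a loop vertex is the midpoint of the two simplex-type generators at $w$ whose $e$-coordinate equals $-\epsilon$, hence is not a vertex. Counting $4$ rays for each of the $2g-2-\#\text{loops}$ loopless vertices and $2$ for each loop vertex gives $8g-8-2\#\text{loops}$ rays.

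For the non-ray integer points, \cref{lemma:separating-characterization} shows that for every bridge $b$ the points $\pm e_b^\vee$ lie in $N_\gamma$ and inside $P^{\circ}_{\gamma,c}$; being neither $0$ nor rays, they contribute $2\#\text{bridges}$ points. Conversely, let $q\in N_\gamma\cap P^{\circ}_{\gamma,c}$ not be a ray. If some loop coordinate $q_\ell$ equals $\pm 2$, then, since $\pm 2$ is attained among the generators only at the two loop rays for $\ell$, the point $q$ must be that loop ray --- excluded; and $q_\ell$ cannot be $\pm 1$, because under the isomorphism $N_\gamma/2\widetilde{N}_\gamma\cong\dd(\CC^0(\gamma,\bZ/2\bZ))$ from the proof of \cref{lemma:separating-characterization} every element of $N_\gamma$ is even on the $1$-cycle $\ell$. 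So all loop coordinates of $q$ vanish; since the generators other than the loop rays already lie in the polytope $\Pi_n$ of \cref{lemma:studying-Pi-n}, and the loop rays contribute only multiples of $\pm e_e^\vee\in\Pi_n$ once their loop coordinates cancel, $q\in\Pi_n$. By \cref{lemma:studying-Pi-n}, $q$ is one of $0$, $\pm e_i^\vee$, or $\pm e_i^\vee\pm e_j^\vee$. A point $\pm e_i^\vee$ lies in $N_\gamma$ only when $i$ is a bridge, hence is already counted. The case $\pm e_i^\vee\pm e_j^\vee$ with $i\neq j$ is ruled out using the functional $u\mapsto\pm u_i\pm u_j$: on the vertex set determined above its maximum $2$ is attained only at simplex-type vertices from graph vertices incident to both $i$ and $j$ with matching signs --- of which there are at most two when $g>2$ --- and that face, which is empty, a point, or a segment, does not contain $\pm e_i^\vee\pm e_j^\vee$. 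Hence the nonzero integer points are exactly the rays and the $\pm e_b^\vee$ over bridges $b$.

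The step I expect to be the main obstacle is the exclusion of the points $\pm e_i^\vee\pm e_j^\vee$: unlike $\pm e_i^\vee$, these genuinely lie inside the ambient polytope $\Pi_n$ of \cref{lemma:studying-Pi-n}, so one cannot appeal to that lemma alone and must exploit the finer fact that $P^{\circ}_{\gamma,c}$ is only the convex hull of the simplex-type vectors supported on a common graph vertex (with the correct coloring parity) together with the loop rays, and then argue on the relevant small face. The loop bookkeeping --- keeping track of which generators coincide and forcing the loop coordinates of non-ray integer points to vanish --- is the other delicate point, but it is routine given \cref{lemma:separating-characterization}.
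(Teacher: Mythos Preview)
Your approach is essentially the same as the paper's: you use the same three preparatory lemmas (\cref{lemma:outside-convex-hull}, \cref{lemma:studying-Pi-n}, \cref{lemma:separating-characterization}) and the same strategy of bounding the polytope inside $\Pi_n$ and then eliminating the spurious lattice points. You are in fact more thorough than the paper in treating loop vertices explicitly, and your face-analysis argument for excluding $\pm e_i^\vee\pm e_j^\vee$ is a clean variant of the paper's terser midpoint argument.

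There is one small gap. After establishing $q\in\Pi_n$ you write ``By \cref{lemma:studying-Pi-n}, $q$ is one of $0$, $\pm e_i^\vee$, or $\pm e_i^\vee\pm e_j^\vee$,'' but \cref{lemma:studying-Pi-n} also allows $q=\pm e_i^\vee\pm e_j^\vee\pm e_k^\vee$, and you have not said why this case is already excluded. In the paper's loop-free setting this follows directly from \cref{lemma:outside-convex-hull}, since all generators are themselves of simplex type; in your setting with loops you have rewritten $q$ as a convex combination of simplex-type generators together with points $\pm e_e^\vee$, and you must observe that the functional $h=s_ie_i^\vee+s_je_j^\vee+s_ke_k^\vee$ from \cref{lemma:outside-convex-hull} takes value $3$ at $q$ but at most $2$ on every other simplex-type generator and at most $1$ on any $\pm e_e^\vee$, so all the weight lies on $q$ itself; thus $q$ is a simplex-type generator and hence a ray, contrary to hypothesis. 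Once this sentence is added your argument is complete.
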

	\begin{proof}
  Assume that~$\graph$ has no separating edges. Then in particular it does not contain loops, and hence the set of vertices of ~$P_{\graph,c}^\circ$ is a subset of~$\{\pm e_i\pm e_j\pm e_k\}$, where~$e_i$ are standard vectors in~$\widetilde{N}_{\graph}\cong\bZ^{\#E}$. By \cref{lemma:outside-convex-hull} we have that all the points~$p(v,s)$ given by~\eqref{equation:vertices} are vertices. It remains to show that there are no other lattice points on the boundary of this polytope.

  For this, by \cref{lemma:studying-Pi-n} it suffices to show that no points of the form~$\pm e_i$ and~$\pm e_i\pm e_j$ is a lattice point in the convex hull of the vertices of~$P_{\graph,c}^{\circ}$. Since the graph $\graph$ has no separating edges, it follows that from \cref{lemma:separating-characterization} that~$\pm e_i$ cannot be a lattice point in $N_{\graph}$.

  Now say for example that~$e_i+e_j$ would be a lattice point of the polytope, and let~$v$ be a vertex adjacent to~$e_i$ and~$e_j$. Then we can write
  \begin{equation}
    e_i+e_j=\frac{1}{2}\left( (e_i+e_j+e_k)+(e_i+e_j-e_k) \right)
  \end{equation}
  where~$e_k$ is the third edge adjacent to~$v$. However, the only way in which we can do this if~$e_i$, $e_j$ and~$e_k$ have the same end points, but then we obtain a subgraph of genus~$2$ in~$\graph$ which is a contradiction.

  The converse follows from \cref{lemma:separating-characterization} and~\eqref{equation:lattice-element}.
\end{proof}

As an application of \cref{theorem:terminal-criterion}, we can give an alternative proof of the following result due to Kiem--Li \cite[Corollary 5.4]{MR2099191} (albeit only for a generic curve, rather than for every curve). Their proof is based on understanding the discrepancy between the canonical bundle of~$\even$ and its Kirwan desingularization. In our setup it rather follows from Kawamata's inversion of adjunction for terminal singularities.
\begin{corollary}[Kiem--Li]\label{cor:KiemLi}
Let~$C$ be a generic smooth projective curve of genus~$g\geq 2$.
Then~$\even$ has at most terminal singularities.
\end{corollary}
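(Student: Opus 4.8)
The strategy is to deduce \cref{cor:KiemLi} from \cref{theorem:terminal-criterion} via a semicontinuity/inversion-of-adjunction argument applied to Manon's toric degeneration. The point is that $\even$ admits a flat degeneration (over a curve $B$ with special point $0$) into the toric variety $\manontoric{\graph}\cong\gptoric{\graph'}{c}{M}$ for \emph{any} trivalent genus-$g$ graph $\graph$, by \cref{theorem:manon}. Choosing $\graph$ to have \emph{no separating edges} — which is possible for every $g\geq 2$, e.g.\ a trivalent graph all of whose edges lie on a cycle — \cref{theorem:terminal-criterion} tells us the central fiber $X_0$ has (Gorenstein) terminal singularities. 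Since the total space $\mathfrak{X}$ is Gorenstein (the homogeneous coordinate rings are arithmetically Gorenstein by \cite[Theorem~12]{MR4025429}, so the cone, hence $\mathfrak{X}$, is Gorenstein), the relative canonical sheaf $\mathrm{K}_{\mathfrak{X}/B}$ is a line bundle and adjunction gives $\mathrm{K}_{X_0}=(\mathrm{K}_{\mathfrak{X}/B})|_{X_0}$, with $X_0$ a Cartier divisor in $\mathfrak{X}$.

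First I would recall Kawamata's inversion of adjunction / Stein-factorization form: if $X_0$ is a reduced Cartier divisor in a normal variety $\mathfrak{X}$ and the pair $(\mathfrak{X},X_0)$ is purely log terminal near $X_0$, equivalently $X_0$ has terminal (in fact it suffices: canonical) singularities and $\mathfrak{X}$ is Gorenstein, then $\mathfrak{X}$ has terminal singularities in a neighbourhood of $X_0$; see e.g.\ Koll\'ar--Mori, Theorem 5.50, or Koll\'ar, \emph{Singularities of the Minimal Model Program}. Concretely: since $X_0$ is terminal and Cartier in $\mathfrak{X}$, the pair $(\mathfrak{X},X_0)$ is plt along $X_0$; as $X_0$ is Gorenstein (a Cartier divisor in a Gorenstein scheme), $\mathfrak{X}$ is Gorenstein, and a Gorenstein plt pair $(\mathfrak{X},X_0)$ with $X_0$ reduced forces $\mathfrak{X}$ to be canonical — and since the adjunction discrepancies are strictly positive away from $X_0$ (as $X_0$ is terminal), $\mathfrak{X}$ is in fact terminal in a Zariski-open neighbourhood $U\supseteq X_0$.

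Next, terminality is an open condition on the base: the locus of $t\in B$ such that $X_t=\pi^{-1}(t)$ has terminal singularities is open (this follows from openness of terminality in families for Gorenstein — indeed l.c.i.\ over a smooth base — morphisms, using that $\mathrm{K}_{\mathfrak{X}/B}$ is a line bundle and resolving $\mathfrak{X}$; or simply: $X_t$ terminal for $t$ in a neighbourhood of $0$ because all $X_t\subset U$ for $t$ near $0$, and a Cartier divisor in a terminal Gorenstein variety that is itself normal is terminal by the same adjunction argument run in reverse). Hence a general fiber $X_t$ — which is $\even$ for the general (``generic'') curve $C$ in the family, by \cref{theorem:manon} — has at most terminal singularities. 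This is exactly the assertion of \cref{cor:KiemLi}.

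**Main obstacle.** The delicate point is the passage from ``$X_0$ terminal'' to ``nearby fibers terminal'': openness of terminality in a \emph{non-smooth} family requires care, because $\mathfrak{X}$ itself is singular along (a subset of) $X_0$. The clean route is to first upgrade to ``$\mathfrak{X}$ terminal near $X_0$'' by inversion of adjunction (using Gorenstein-ness of both $\mathfrak{X}$ and $X_0$ to avoid any $\bQ$-coefficient subtleties), and only then invoke that a general hyperplane-type section $X_t$ of a terminal Gorenstein variety is terminal; one must also check normality of $X_t$ (automatic here, as $X_t=\even$ is normal, being a GIT quotient / known to be normal by Seshadri), and that the general member of the family is indeed $\even$ for a generic $C$, which is exactly the content of Manon's theorem. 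A secondary point to state carefully is the existence, for every $g\ge 2$, of a trivalent genus-$g$ graph with no separating edges, so that \cref{theorem:terminal-criterion} applies; for $g=2$ one uses the Theta graph (where $X_0=\bP^3$ is smooth) and for $g\ge 3$ any $2$-edge-connected trivalent graph of the right genus works.
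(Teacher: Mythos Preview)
Your proposal is correct and follows essentially the same route as the paper: pick a trivalent genus-$g$ graph with no separating edges (the paper uses the ladder graph), invoke \cref{theorem:terminal-criterion} to get a terminal toric central fiber, and use inversion of adjunction to conclude that the generic fiber $\even$ is terminal. The only difference is packaging: where you unpack the argument through plt pairs and terminality of the total space $\mathfrak{X}$, the paper simply cites Kawamata \cite[Theorem~1.5]{MR1718145}, which directly asserts that terminality of the central fiber implies terminality of the general fiber in a flat family---so your ``main obstacle'' is precisely what that theorem is designed to absorb, and you need not work so hard.
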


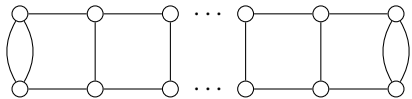
\begin{figure}[t!]
  \centering
  \begin{tikzpicture}[scale=1]
    \node[vertex] (A) at (0,0) {};
    \node[vertex] (B) at (1,0) {};
    \node[vertex] (C) at (2,0) {};
    \node[vertex] (D) at (3,0) {};
    \node[vertex] (E) at (4,0) {};
    \node[vertex] (F) at (5,0) {};
    \node[vertex] (G) at (0,1) {};
    \node[vertex] (H) at (1,1) {};
    \node[vertex] (I) at (2,1) {};
    \node[vertex] (J) at (3,1) {};
    \node[vertex] (K) at (4,1) {};
    \node[vertex] (L) at (5,1) {};

    \node at (2.5,0) {$\ldots$};
    \node at (2.5,1) {$\ldots$};

    \draw (A) edge (B);
    \draw (B) edge (C);
    \draw (D) edge (E);
    \draw (E) edge (F);

    \draw (A) edge [bend left]  (G);
    \draw (A) edge [bend right] (G);
    \draw (B) edge (H);
    \draw (C) edge (I);
    \draw (D) edge (J);
    \draw (E) edge (K);
    \draw (F) edge [bend left]  (L);
    \draw (F) edge [bend right] (L);

    \draw (G) edge (H);
    \draw (H) edge (I);
    \draw (J) edge (K);
    \draw (K) edge (L);
  \end{tikzpicture}
  \caption{Ladder graph}
  \label{figure:ladder}
\end{figure}

\begin{proof}
  By Kawamata \cite[Theorem~1.5]{MR1718145} we know that having at most terminal singularities in the central fiber implies that a general fiber has at most terminal singularities.

  Manon's construction provides us with a family of varieties whose special fiber is toric and whose general fibers are of the form~$\even$, where~$C$ runs over smooth projective curves of genus~$g$ in the smooth locus of the family.

  By \cref{theorem:terminal-criterion} it suffices to choose a trivalent graph~$\graph$ of genus~$g$ without separating edges, so that the toric special fiber has at most terminal singularities. For this we can consider for instance the ladder graph as in \cref{figure:ladder}. Then in the family given by the toric degeneration the central fiber has at most terminal singularities, hence so does~$\even$ in a Zariski open neighbourhood of the special fiber.
\end{proof}

\subsection{Small resolutions of singularities}
In the remainder of this section,
we further discuss when the toric degenerations constructed by Manon admit a small resolution of singularities.
We will not use results of this section in the rest of the article.
%This is  related to the general question of comparing descendant Gromov-Witten invariants of a Fano variety~$F$ and periods of the Newton polynomials of the polytope associated to a  toric degeneration.
Toric degenerations admitting a small resolution of singularities
are useful in the study of mirror symmetry, as e.g.~showcased in \cite{MR1756568,MR2112571,MR2609019}.
We refer the reader to~\cref{section:potentials} for a more precise result
on how existence of small resolution implies a direct way of computing descendant Gromov-Witten invariants.
%The following is motivated by a crucial observation of Bondal--Galkin \cite{Bondal-Galkin}. They observed that the work of Nishinou--Nohara--Ueda \cite{MR2609019} can be interpreted as a candidate for this mirror Laurent polynomial. In \cite{MR2609019}, the authors construct a Laurent polynomial  for any smooth Fano variety $F$ by counting holomorphic discs with a fixed Lagrangian boundary in the Fano variety $F$. They construct their  Laurent polynomial by first constructing a toric degeneration $X$ of the Fano variety $F$ and consider the associated toric complete integrable system (see \cref{definition:integrable-system}).

%Recall that a resolution of singularities for a variety~$X$
%is said to be \emph{small} if there is no exceptional divisor.
%In particular the resolution is crepant. % Small resolutions play fundamental roles in the minimal model program, McKay correspondences and orbifold Gromov--Witten theory.
%The existence of a small resolution
%is a hard question in general
%and it is a delicate condition
%on the singularities of the variety $X$.
% Even for finite quotient singularities of threefolds,
% existence of small resolutions
% has garnered a lot of attention
% \cite{MR3771150, MR1824990, MR1380512}.
% It is well-known that if $X$ has
% a $\bQ$-factorial terminal singularity,
% then $X$ does not have a small resolution.
% On the other hand if $X$ has Gorenstein terminal singularity,
% then it may admit a small resolution.
%We restrict ourselves to a special class
%of Gorenstein terminal toric varieties.
Recall that (torically) resolving the singularities of a toric variety $X$
amounts to subdividing each cone in the fan of the toric variety
into a union of cones generated by a basis of the lattice.
A toric resolution is small if we can find
such a subdivision without adding new rays.
This is in general a non-trivial combinatorial problem.

 We consider the following class of trivalent graphs.

\begin{definition}
  Let~$\graph$ be a trivalent graph.
  We say it is \emph{maximally edge-connected}
  if one needs to remove at least 3 edges to make it disconnected.
\end{definition}

Maximally connected trivalent graphs~$\graph$ have a very interesting positivity property.
Let~$C_0$ be a nodal curve whose dual graph is a maximally edge-connected trivalent graph,
then by \cite[Proposition 2.5]{MR1097026} the canonical bundle of the curve~$C_0$ is very ample.
We do not precisely know which role maximally edge-connectedness plays
in the combinatorial description of the cones of toric varieties~$\gptoric{\graph}{c}{M}$,
but motivated by the genus two case and experimental evidence,
we conjecture the following.

\begin{conjecture}
  \label{conjecture:small-resolution}
  Let~$(\graph,c)$ be a maximally edge-connected trivalent graph.
  Then the graph potential toric variety~$\gptoric{\graph}{c}{M}$
  admits a small resolution of singularities.
  Combinatorially,
  we conjecture that there exists a refinement of the fan of~$\gptoric{\graph}{c}{M}$
  into a simplicial fan such that the resulting toric variety is smooth.
\end{conjecture}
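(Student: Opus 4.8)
The plan is to convert the conjecture into a combinatorial statement about the reflexive polytope $P_{\graph,c}^\circ$ and then to build the required fan refinement one maximal cone at a time. A maximally edge-connected trivalent graph has neither a loop nor a bridge (either would be an edge-cut of size at most two), so by \cref{theorem:terminal-criterion} and \cref{lemma:grandcentral} the toric variety $\gptoric{\graph}{c}{M}$ is Gorenstein and terminal; thus $P_{\graph,c}^\circ$ is reflexive and its only lattice points are the origin and its $8g-8$ vertices $p(v,s)$ of \cref{equation:vertices}. Consequently every facet $F$ of $P_{\graph,c}^\circ$ lies in an affine hyperplane missing the origin and therefore contains no lattice point other than its own vertices. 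So a small resolution exists if and only if $\partial P_{\graph,c}^\circ$ can be triangulated by simplices that are unimodular with respect to $N_\graph$ using no vertices beyond the $p(v,s)$; and, by reflexivity, the cone over a facet-simplex $\sigma$ is a smooth cone exactly when $\sigma$ is unimodular in the affine lattice $\operatorname{aff}(\sigma)\cap N_\graph$. The conjecture therefore reduces to showing that \emph{each facet of $P_{\graph,c}^\circ$ admits a unimodular triangulation}, plus a compatibility statement so that the local triangulations agree along shared faces.

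\textbf{Facets.} Next I would make the facets explicit. Dually to the defining half-spaces of $P_{\graph,c}$, a facet $F_{v_0,\varepsilon}$ is indexed by a vertex $v_0\in V$ with incident edges $a,b,c$ together with a sign vector $\varepsilon\in\{\pm1\}^3$ adapted to $c(v_0)$, and its vertices are those $p(v,s)$ maximizing the linear functional dual to $(v_0,\varepsilon)$. Because two distinct vertices of $\graph$ share at most one edge (for $g\ge 3$ a maximally edge-connected trivalent graph has no loop and no multi-edge), this maximization is local to single edges, and $F_{v_0,\varepsilon}$ is the product, over the vertices $v\neq v_0$, of elementary factors --- a point, a segment, or a low-dimensional simplex --- dictated by how $v$ meets $\{a,b,c\}$ and by how the sign constraints propagate through the cycle spaces $C_{v,c}$ of \cref{equation:colored-cycles}. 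After an integral affine change of coordinates taking one vertex of $F_{v_0,\varepsilon}$ to the origin and trivializing $\operatorname{aff}(F_{v_0,\varepsilon})\cap N_\graph$, I expect $F_{v_0,\varepsilon}$ to appear as a $0/1$-polytope $\{x : 0\le x\le 1,\ Ax\le b\}$ whose matrix $A$ is a signed incidence matrix of a graph assembled from $\graph$ and $v_0$, hence totally unimodular. The genus-two cube with vertices $(\pm1,\pm1,\pm1)$ --- whose facets turn into \emph{unit} squares once one remembers the lattice is $N_\graph$ rather than $\widetilde N_\graph$ --- is the model case.

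\textbf{Unimodular triangulation and gluing.} Since a lattice polytope cut out by a totally unimodular system is compressed (every pulling triangulation is unimodular, by results of Sturmfels, and of Ohsugi--Hibi in the graph setting), the facets $F_{v_0,\varepsilon}$ would be compressed, hence carry unimodular triangulations; equivalently one can write down the staircase triangulation of the relevant products of simplices by hand and check unimodularity. Fixing one generic linear order on the vertices $p(v,s)$ and taking the corresponding pulling (placing) triangulation of $P_{\graph,c}^\circ$ then produces a single regular refinement of the fan whose restriction to each maximal cone is the unimodular triangulation above --- compressedness is exactly what rules out an unlucky order producing a non-unimodular simplex on some facet --- and this refinement is smooth and introduces no new ray, i.e.\ it is a small resolution. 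Note that one cannot shortcut this by bootstrapping from a single graph via the elementary transformations of graph potentials, since maximal edge-connectedness is not preserved by Hatcher--Thurston moves (the dumbbell graph, reachable from the theta graph, is already non-terminal by \cref{theorem:terminal-criterion}).

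\textbf{Main obstacle.} The hard part is precisely the middle step: proving, uniformly in $\graph$, that every facet of $P_{\graph,c}^\circ$ is integrally affinely equivalent to a $0/1$-polytope defined by a totally unimodular (graph-incidence) system. This forces one to track the sublattice $N_\graph\subset\widetilde N_\graph$ very carefully --- it is the passage to $N_\graph$ that shrinks the ``fat'' facets (such as the $2\times2$ square at $g=2$) to primitive ones --- and to keep control of how the coloring $c$ enters the admissible sign patterns through $C_{v,c}$. The gluing and regularity claim is comparatively soft and should follow formally once compressedness is in hand, but without the combinatorial facet description it remains out of reach, which is why the conjecture is still open.
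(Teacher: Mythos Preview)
The paper does not prove this statement: it is explicitly stated as a conjecture, supported only by computer-algebra verification for the theta graph ($g=2$), the tetrahedron graph ($g=3$), and the bipartite graph of genus~$4$ (see \cref{remark:small-resolution-check}). So there is no ``paper's own proof'' to compare against; your proposal is an outline of an attack on an open problem, and you correctly flag the decisive step as unproven.

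That said, your description of the facets of $P_{\graph,c}^\circ$ contains a concrete error. The pairs $(v_0,\varepsilon)$ with $v_0\in V$ and $\varepsilon$ an admissible sign vector index the \emph{vertices} $p(v_0,s)$ of $P_{\graph,c}^\circ$ (equivalently, the facet inequalities of $P_{\graph,c}$), not its facets. Facets of $P_{\graph,c}^\circ$ are in bijection with vertices of the moment polytope $P_{\graph,c}$, i.e.\ with points where $3g-3$ of the half-space inequalities \eqref{equation:half-spaces-general-i} become equalities simultaneously; their combinatorics is governed by the graph globally and is not naturally indexed by a single graph-vertex and a sign. Already for the genus-$2$ theta graph with one colored vertex the cube $P_{\graph,c}^\circ$ has six facets, not $8g-8=8$. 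So before one can even formulate the compressedness or total-unimodularity claim, one needs a correct description of the vertices of $P_{\graph,c}$ and the vertex--facet incidences of $P_{\graph,c}^\circ$, and this is already a nontrivial combinatorial problem that your outline passes over. The remainder of your strategy (compressed polytopes, pulling triangulations, regularity and gluing) is a reasonable template, but it rests entirely on this missing structural input.
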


\begin{figure}[t!]
  \centering
  \begin{subfigure}[b]{\textwidth}
    \centering
    \begin{tikzpicture}[scale=1]
      \node[vertex] (A) at (0,0) {};
      \node[vertex] (B) at (2,0) {};
      \node[vertex] (C) at (1,0.5) {};
      \node[vertex] (D) at (1,1.5) {};

      \draw (A) edge (B);
      \draw (A) edge (C);
      \draw (A) edge (D);
      \draw (B) edge (C);
      \draw (B) edge (D);
      \draw (C) edge (D);
    \end{tikzpicture}
    \caption{Tetrahedron graph}
    \label{subfigure:tetrahedron}
  \end{subfigure}

  {\ }

  \begin{subfigure}[b]{\textwidth}
    \centering
    \begin{tikzpicture}[scale=1]
      \node[vertex] (A) at (0,0) {};
      \node[vertex] (B) at (1,0) {};
      \node[vertex] (C) at (2,0) {};
      \node[vertex] (D) at (0,1) {};
      \node[vertex] (E) at (1,1) {};
      \node[vertex] (F) at (2,1) {};

      \draw (A) edge (D);
      \draw (A) edge (E);
      \draw (A) edge (F);
      \draw (B) edge (D);
      \draw (B) edge (E);
      \draw (B) edge (F);
      \draw (C) edge (D);
      \draw (C) edge (E);
      \draw (C) edge (F);
    \end{tikzpicture}
    \caption{Bipartite graph of $g=4$}
    \label{subfigure:bipartite-g-4}
  \end{subfigure}
  \caption{Graphs giving small toric resolutions}
  \label{figure:small-graphs}
\end{figure}
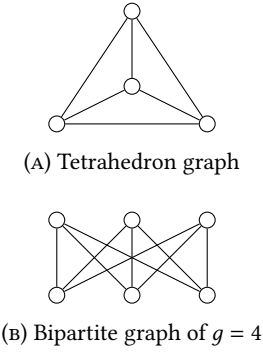

\begin{remark}
  \label{remark:small-resolution-check}
  For~$g=2$ this follows from the discussion in \cref{example:g-2-singularities} for the Theta graph, which is the unique maximally connected trivalent graph of genus~2.

  Using computer algebra we have checked \cref{conjecture:small-resolution} also in genus~3 for the tetrahedron graph (depicted in \cref{subfigure:tetrahedron}), and in genus~4 for the bipartite graph (see \cref{subfigure:bipartite-g-4}) with an odd number of vertices colored, by explicitly constructing a simplicial subdivision of the fan polytope. Moreover, in the genus three, four and five cases, we can find non-maximally connected graphs with no separating edges such that the corresponding toric varieties do not admit a small resolution of singularities.
\end{remark}

% !TEX encoding = UTF-8 Unicode

\section{Some explicit calculations for periods}
\label{section:explicit-calculations}
In this section we collect some observations about
the (non-)vanishing
and values of the (quantum) periods studied in the main body of the paper.
These explain some of the patterns which one can observe in \cref{table:quantum-periods-odd,table:quantum-periods-even}.

First we will consider the second and fourth (quantum) period,
in \cref{subsection:general-patterns} we discuss some patterns which hold more generally.

\subsection{Second quantum period}
\label{subsection:second-period}
The first interesting quantum period of~$\odd$ is~$p_2$.
We will show in \cref{corollary:p2-vanishing} using algebro-geometric methods that it is always zero, as soon as~$g\geq 3$.
This is suggested by the combinatorial calculation in \cref{proposition:p2} for the classical period~$\pi_2$.

In \cref{subsection:general-patterns} we will extend this combinatorial vanishing result for the period~$\pi_2$
to the vanishing of~$\pi_{4n+2}$ for all~$n\geq 0$ and~$g\gg 0$.

\begin{proposition}
  \label{proposition:p2}
  Let~$\graph$ be a trivalent graph of genus~$g$.
  The second period of~$\widetilde{W}_{\graph,1}$ associated to~$\odd$ is given by
  \begin{equation}
    \pi_2(\widetilde{W}_{\graph,1})=
    \begin{cases}
      8 & g=2 \\
      0 & g\geq 3.
    \end{cases}
  \end{equation}
\end{proposition}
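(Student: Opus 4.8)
The plan is to compute the constant term $\pi_2(\widetilde{W}_{\graph,1})=\bigl[\widetilde{W}_{\graph,1}^2\bigr]_{x^0}$ directly. Since every monomial of a vertex potential $\widetilde{W}_{v,c}$, hence of $\widetilde{W}_{\graph,1}=\sum_{v\in V}\widetilde{W}_{v,c}$, occurs with a positive integer coefficient, expanding the square gives
\[
  \pi_2(\widetilde{W}_{\graph,1})=\sum_{\alpha} a_\alpha a_{-\alpha},\qquad a_\alpha:=[\widetilde{W}_{\graph,1}]_{x^\alpha}\ge 0,
\]
the sum ranging over exponent vectors $\alpha\in\widetilde{N}_\graph$. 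In particular $\pi_2\ge0$, with $\pi_2>0$ precisely when $\widetilde{W}_{\graph,1}$ contains two monomials with opposite exponent vectors. So the whole computation reduces to deciding which pairs of monomials of $\widetilde{W}_{\graph,1}$ are inverse to one another, and counting them.

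First I would record the monomials of a single vertex potential from~\eqref{equation:vertex-potential}. If $v$ has three distinct incident edges $e_i,e_j,e_k$, every monomial of $\widetilde{W}_{v,c}$ has the form $x_i^{\pm1}x_j^{\pm1}x_k^{\pm1}$: its support is exactly $\{e_i,e_j,e_k\}$, it is never constant, and no monomial of $\widetilde{W}_{v,c}$ is the inverse of another (a monomial and its inverse correspond to complementary parity vectors, whose Hamming weights sum to the odd number $3$, whereas all parity vectors occurring in $\widetilde{W}_{v,c}$ have the single parity $c(v)$). If $v$ carries a loop $\ell$ together with one further edge $e$, running over the four admissible parity vectors gives
\[
  \widetilde{W}_{v,c}=x_\ell^{2}x_e^{(-1)^{c(v)}}+x_\ell^{-2}x_e^{(-1)^{c(v)}}+2\,x_e^{-(-1)^{c(v)}},
\]
and again none of these three monomials is constant or the inverse of another. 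Consequently a mutually inverse pair of monomials of $\widetilde{W}_{\graph,1}$, should one exist, must consist of a monomial $m$ of $\widetilde{W}_{v,c}$ and the monomial $m^{-1}$ of $\widetilde{W}_{w,c}$ for two \emph{distinct} vertices $v\ne w$.

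Next comes the case $g\ge3$, which is the heart of the matter. Assume such a pair $m,m^{-1}$ exists and set $T:=\operatorname{supp}(m)=\operatorname{supp}(m^{-1})$. Every edge of $T$ is incident both to $v$ and to $w$, so (as $v\ne w$) it is not a loop and it joins $v$ to $w$; thus $T$ is a set of parallel edges between $v$ and $w$. If $v$ carries a loop, the description above forces $T\subseteq\{e\}$ ($e$ the non-loop edge at $v$), and since $m$ is non-constant $T=\{e\}$ and $m=x_e^{\varepsilon}$ with coefficient $2$ for some $\varepsilon=\pm1$; then $m^{-1}=x_e^{-\varepsilon}$ is supported on the single edge $e$, which happens only if $w$ too is a loop vertex whose non-loop edge is $e$ — but then $v,w$, the two loops and $e$ form a connected component isomorphic to the dumbbell graph of genus $2$ (\cref{figure:dumbbell-graph}), contradicting $g\ge3$ and the connectedness of $\graph$. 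If instead $v$ has three distinct edges, then $T$ is this set of three edges, all joining $v$ to $w$; so $v$ and $w$ are joined by three parallel edges, and $v,w$ together with these edges form a connected component isomorphic to the theta graph of genus $2$ (\cref{figure:theta-graph}) — again impossible for $g\ge3$. Hence for $g\ge3$ the monomial set of $\widetilde{W}_{\graph,1}$ contains no inverse pair and $\pi_2(\widetilde{W}_{\graph,1})=0$. I expect the only delicate point to be exactly this case analysis: verifying that the support argument really exhausts every way two distinct adjacent trivalent vertices can sit inside $\graph$ — loops included — and that each surviving configuration is forced to be an entire connected component of genus $2$.

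Finally, for $g=2$ there are only the theta graph and the dumbbell graph, and by \cref{theorem:graphtqft} graph potentials of genus-$2$ graphs of the same coloring parity have the same classical periods, so it is enough to treat the theta graph. With colored vertex $v_1$ and uncolored vertex $v_2$ joined by edges $a,b,c$, formula~\eqref{equation:vertex-potential} gives
\[
  \widetilde{W}_{v_1,c}=\frac{bc}{a}+\frac{ac}{b}+\frac{ab}{c}+\frac{1}{abc},\qquad
  \widetilde{W}_{v_2,c}=abc+\frac{c}{ab}+\frac{b}{ac}+\frac{a}{bc},
\]
and the four monomials of $\widetilde{W}_{v_1,c}$ are exactly the inverses of the four monomials of $\widetilde{W}_{v_2,c}$; since $[\widetilde{W}_{v_i,c}^2]_{x^0}=0$ by the second paragraph, this yields $\pi_2(\widetilde{W}_{\graph,1})=2\,[\widetilde{W}_{v_1,c}\widetilde{W}_{v_2,c}]_{x^0}=2\cdot4=8$.
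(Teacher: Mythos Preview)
Your proof is correct and follows essentially the same approach as the paper: both compute $[\widetilde{W}_{\graph,1}^2]_{x^0}$ by searching for pairs of mutually inverse monomials, reduce the case $g=2$ to the theta graph, and for $g\geq 3$ argue that any inverse pair would force a connected component of genus~$2$. Your version is more detailed than the paper's sketch---in particular you handle vertices carrying loops explicitly and your support argument applies uniformly to any pair of distinct vertices, whereas the paper only says that the monomials of the unique colored vertex have no inverse (implicitly using that a monomial and its inverse have complementary parity vectors, so an inverse pair must involve vertices of opposite color).
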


\begin{proof}
  For~$g=2$ this is immediate from taking the square of the graph potential for the colored Theta graph in \cite[Example~2.6]{gp-tqft}
  and pairing inverse monomials together.

  Similarly, for~$g\geq 3$ we are pairing inverse monomials together
  by squaring and taking the constant term.
  But because we assume that the graph~$\graph$ is connected,
  the monomials contributed by the unique colored vertex have no corresponding inverse monomial in the graph potential.
\end{proof}

The second quantum period on the other hand counts
the number of lines (in the polarization given by the ample generator~$\Theta$ of~$\Pic\odd$)
through a generic point of the variety~$\odd$.
We wish to explain the vanishing for~$g\geq 3$ geometrically,
by using the explicit geometry in the hyperelliptic case.

\begin{remark}
  \label{remark:g-2-agreement}
  For~$g=2$ we have that~$\odd$ is a Fano~3\dash fold,
  and its second (regularized) quantum period
  can be read off in \cite[\S6]{MR3470714}.
  More generally, in \cite[Corollary~4.10]{gp-tqft}
  we have obtained an expression of
  (the inverse Laplace transform of)
  the period sequence
  which agrees with that of
  the quantum period sequence
  from \S6~of op.~cit.
\end{remark}
%\fixthis{And QH was computed by Beauville in 90, J-series by Givental in 95+.
%Galkin in STD constructed small toric degeneration and associated Laurent polynomial,
%and proved that its period equals to the quantum period using quantum Lefschetz and its inversion
%(index equals two and both smooth threefold and its toric degeneration have an isomorphic
%hyperplane section, del Pezzo of degree four --- I needed this argument for del Pezzo threefold of degree five,
%but the argument is written uniformly for all del Pezzo threefolds).}

\paragraph{Geometric setup}
In \cite{MR0429897} the relationship between
a hyperelliptic curve~$C$,
the geometry of the intersection of 2 quadrics
(given by quadratic forms~$q_1$ and~$q_2$, determined by the hyperelliptic curve~$C$)
in~$\bP^{2g+1}$,
and the moduli space~$\odd$ is described.
We will use this to sketch a geometric argument for the vanishing of the second quantum period for~$g\geq 3$.

By Theorem~1 of op.~cit.~$\odd$ is isomorphic to
the closed subvariety of~$\Gr(g-1,2g+2)$ parametrizing~$\bP^{g-2}\subseteq\bP^{2g+1}$ contained in the quadrics.
Likewise, by Theorem~2 of op.~cit.~the Jacobian of~$C$
(or rather, the torsor~$\Pic^gC$)
is isomorphic to a closed subvariety of~$\Gr(g,2g+2)$ parametrizing~$\bP^{g-1}\subseteq\bP^{2g+1}$ contained in the quadrics.
These two closed subvarieties are given as the intersection~$\OGr(k,2g+2;q_1)\cap\OGr(k,2g+2;q_2)$,
where~$\OGr(k,2g+2;q_i)$ is the orthogonal Grassmannian embedded in~$\Gr(k,2g+2)$, for~$k=g-1,g$.

Now consider the incidence correspondence
\begin{equation}
  \begin{tikzcd}
    \Fl(g-1,g,2g+2) \arrow[d] \arrow[r] & \Gr(g-1,2g+2) \\
    \Gr(g,2g+2).
  \end{tikzcd}
\end{equation}
The vertical map is a~$\bP^{g-1}$\dash bundle,
the horizontal map is a~$\bP^{g+3}$\dash bundle.

The restriction of the vertical map to~$\Pic^g(C)$
corresponds to the interpretation of the incidence correspondence from \cite[Remark~5.10(I)]{MR0429897}.
For this interpretation we will consider~$\odd$ for a line bundle~$\cL$ of degree~$2g+1$,
and consider non-split extensions
\begin{equation}
  0\to j\to\cV_j\to j^\vee\otimes\cL\to 0
\end{equation}
for~$j\in\Pic^gC$, which are always stable.

The~$g-1$\dash dimensional projective space~$\bP_j:=\bP(\HH^1(C,j^2\otimes\cL)^\vee)$
parametrizing these extensions gives rise to a~$\bP^{g-1}$\dash bundle~$\cP$.
If we let~$\cM$ be the Poincar\'e bundle on~$C\times\Pic^g(C)$,
then~$\cP\cong\bP_C(\cE)$ where~$\cE:=\RR^1p_{2,*}(\cM^{\otimes2}(-1))$.
Then \cite[Remark~5.10(I)]{MR0429897} gives the following
\begin{lemma}
  \label{lemma:P-interpretation}
  We have that~$\cP$ fits into the fiber product diagram
  \begin{equation}
    \begin{tikzcd}
      \cP \arrow[r, hook] \arrow[d] & \Fl(g-1,g,2g+2) \arrow[d] \\
      \Pic^g(C) \arrow[r, hook] & \Gr(g,2g+2).
    \end{tikzcd}
  \end{equation}
\end{lemma}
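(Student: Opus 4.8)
The plan is to realize both sides as $\bP^{g-1}$-bundles over $\Pic^g(C)$ and to match them. Write $q\colon\Fl(g-1,g,2g+2)\to\Gr(g,2g+2)$ for the map forgetting the $(g-1)$-dimensional part of the flag, and $\iota\colon\Pic^g(C)\hookrightarrow\Gr(g,2g+2)$ for the Desale--Ramanan embedding recalled above, whose image parametrizes the $\bP^{g-1}$'s in $\bP^{2g+1}$ lying on both quadrics. The fiber of $q$ over a $g$-plane $W\subset\bC^{2g+2}$ is the Grassmannian $\Gr(g-1,W)$ of hyperplanes in $W$, i.e.\ $\bP(W^{\vee})$; globally $q$ is the projectivization $\bP(\cS^{\vee})$ of the dual of the rank-$g$ tautological subbundle $\cS$ on $\Gr(g,2g+2)$. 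Consequently the fiber product $\cP':=\Fl(g-1,g,2g+2)\times_{\Gr(g,2g+2)}\Pic^g(C)$ is the $\bP^{g-1}$-bundle $\bP(\iota^{*}\cS^{\vee})$ over $\Pic^g(C)$, and by construction it fits into the Cartesian square of the lemma.

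It remains to identify $\cP'$ with $\cP=\bP(\cE)$, which I would do fiberwise first and then in families. By construction of $\iota$, the fiber $W_{j}:=\iota^{*}\cS|_{j}$ is the $g$-dimensional subspace of $\bC^{2g+2}$ spanning the projective subspace attached to $j\in\Pic^g(C)$; Newstead's reinterpretation of Desale--Ramanan in \cite[Remark~5.10(I)]{MR0429897} identifies $W_{j}^{\vee}$ canonically with the extension group $\operatorname{Ext}^{1}(j^{\vee}\otimes\cL,j)$. Under this identification a point of the fiber of $q$ over $W_{j}$ --- a hyperplane in $W_{j}$, equivalently a line in $W_{j}^{\vee}$ --- is exactly an isomorphism class of non-split extension $0\to j\to\cV_{j}\to j^{\vee}\otimes\cL\to0$, i.e.\ a point of the fiber of $\cP$ over $j$.

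To make this an isomorphism of bundles, I would use that both are built from a universal family. The embedding $\iota$ is assembled from the Poincar\'e bundle $\cM$ on $C\times\Pic^g(C)$, and by cohomology and base change the sheaf $\cE=\RR^1p_{2,*}(\cM^{\otimes2}(-1))$ is locally free of rank $g$ with fiber $\operatorname{Ext}^{1}(j^{\vee}\otimes\cL,j)$ at $j$: the twist is chosen so that $\cM^{\otimes2}(-1)$ restricts on each $C\times\{j\}$ to a line bundle of negative degree, whence $R^0p_{2,*}$ vanishes and the formation of $R^1$ commutes with base change. Matching the two universal constructions yields $\iota^{*}\cS^{\vee}\cong\cE\otimes\Lambda$ for some line bundle $\Lambda$ on $\Pic^g(C)$; twisting by $\Lambda$ leaves the projectivization unchanged, so $\cP'=\bP(\iota^{*}\cS^{\vee})\cong\bP(\cE)=\cP$, and this identification is compatible with the maps to $\Fl(g-1,g,2g+2)$ and to $\Pic^g(C)$ by construction, which is the asserted Cartesian diagram.

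The main obstacle is the bookkeeping underlying \cite[Remark~5.10(I)]{MR0429897}: one has to match Desale--Ramanan's explicit model of the ambient $(2g+2)$-dimensional space and of $W_{j}$ with the extension-theoretic picture, pin down the twist in $\cM^{\otimes2}(-1)$ together with the line-bundle ambiguity $\Lambda$, and check that all of this is carried out in families over $\Pic^g(C)$ rather than merely pointwise. Since this is essentially the content of the cited remark, the proof amounts to unwinding that reference; the hyperelliptic hypothesis enters only to make the Desale--Ramanan description available.
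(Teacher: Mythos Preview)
Your proposal is correct and essentially coincides with the paper's approach: the paper gives no proof at all, simply stating that the lemma is the content of \cite[Remark~5.10(I)]{MR0429897}, and your argument is precisely an unwinding of that remark (identifying the pullback of the flag bundle as $\bP(\iota^*\cS^\vee)$ and matching fibers with extension groups). One minor point: the cited reference MR0429897 is Desale--Ramanan, not Newstead, so the phrase ``Newstead's reinterpretation of Desale--Ramanan'' should be corrected.
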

The horizontal map in the incidence correspondence gives a morphism~$f\colon\cP\to\odd$.

We can consider other incidence correspondences.
Before proving the interpretation in \cref{proposition:012-interpretation},
we will give an easier description for the Fano variety of lines on~$\odd$.
This is a natural continuation of the discussion in \cite[Remark~5.10]{MR0429897}.

\begin{proposition}
  \label{proposition:fano-variety-interpretation}
  The Fano variety of lines~$\fano(\odd)$ fits in the fiber product diagram
  \begin{equation}
    \begin{tikzcd}
      \fano(\odd) \arrow[r, hook] \arrow[d] & \Fl(g-2,g,2g+2) \arrow[d] \\
      \Pic^g(C) \arrow[r, hook] & \Gr(g,2g+2).
    \end{tikzcd}
  \end{equation}
  In particular, it is a~$\bP^{g-1}$\dash bundle over~$\Pic^g(C)$.
\end{proposition}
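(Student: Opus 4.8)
The plan is to pass to the Desale--Ramanan model of $\odd$ and reduce the statement to a linear-algebra description of lines on a Grassmannian. Recall from \cite{MR0429897} (and the discussion preceding the statement) that for a line bundle $\cL$ of degree $2g+1$ one has $\odd\cong\OGr(g-1,2g+2;q_1)\cap\OGr(g-1,2g+2;q_2)\subseteq\Gr(g-1,2g+2)$, the locus of $(g-1)$-planes isotropic for both quadrics of the pencil attached to $C$, and that the Pl\"ucker polarization of $\Gr(g-1,2g+2)$ restricts to $\Theta$ on $\odd$. Hence a line on $\odd$ (a $\Theta$-degree-one rational curve) is exactly a line of $\Gr(g-1,2g+2)$ contained in $\odd$, so that $\fano(\odd)$ is a closed subscheme of the variety of lines $\fano(\Gr(g-1,2g+2))$.

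First I would invoke the classical identification $\fano(\Gr(k,W))\cong\Fl(k-1,k+1,W)$: every line of $\Gr(k,W)$ has the form $\ell(A,B)=\{V\mid A\subseteq V\subseteq B\}$ for a unique flag $A\subset B$ with $\dim A=k-1$, $\dim B=k+1$, and conversely. With $k=g-1$ and $W=\bC^{2g+2}$ this gives $\fano(\Gr(g-1,2g+2))\cong\Fl(g-2,g,2g+2)$. The key step is then the \emph{isotropy criterion}: $\ell(A,B)\subseteq\odd$ if and only if the $g$-plane $B$ is totally isotropic for both $q_1$ and $q_2$. One direction is immediate, since any $V$ with $A\subseteq V\subseteq B$ inherits isotropy from $B$; for the converse, pick a $2$-dimensional complement $U$ with $B=A\oplus U$, so that for every nonzero $u\in U$ the $(g-1)$-plane $A+\langle u\rangle$ is isotropic for $q_i$, and running over $u,u'\in U$ one extracts $q_i|_A=0$, $q_i|_U=0$ and $b_{q_i}(A,U)=0$, whence $q_i|_B=0$ (the polarization identity here uses $\operatorname{char}=0$). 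Combining this with Theorem~2 of \cite{MR0429897}, which identifies the totally isotropic $g$-planes for the pencil with $\Pic^g(C)\subseteq\Gr(g,2g+2)$, yields
\[
\fano(\odd)=\{(A,B)\in\Fl(g-2,g,2g+2)\mid B\in\Pic^g(C)\}=\Fl(g-2,g,2g+2)\times_{\Gr(g,2g+2)}\Pic^g(C),
\]
which is the asserted fiber product square; here one should verify the identity scheme-theoretically by comparing the incidence ideals cutting out both sides (this is what the Fano \emph{scheme} statement requires).

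For the ``in particular'' clause I would note that $\Fl(g-2,g,2g+2)\to\Gr(g,2g+2)$ is the relative Grassmannian of $(g-2)$-dimensional subspaces of the rank-$g$ tautological subbundle, so after base change along $\Pic^g(C)\hookrightarrow\Gr(g,2g+2)$ the map $\fano(\odd)\to\Pic^g(C)$ becomes a Grassmann-bundle with fiber $\Gr(g-2,\bC^g)$ over each point (equivalently $\Gr(2,\bC^g)$ by duality; for $g=2$ the fiber is a point, recovering the classical identification of the variety of lines on the intersection of two quadrics in $\bP^5$ with $\Jac(C)$, a useful consistency check, and for $g=3$ it is the $\bP^2$-bundle appearing in the statement). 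One should also check compatibility with \cref{lemma:P-interpretation} and the incidence correspondence of \cite[Remark~5.10]{MR0429897}. The main obstacle is the isotropy criterion together with the care needed to make the fiber-product identity scheme-theoretic; once that and the Desale--Ramanan dictionary (including the Pl\"ucker/$\Theta$ comparison) are in hand, the rest is formal.
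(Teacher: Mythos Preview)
Your argument for the fibre-product identification is correct and follows essentially the same route as the paper: both use the Desale--Ramanan realisation of $\odd$ inside $\Gr(g-1,2g+2)$ together with the identification $\fano(\Gr(g-1,2g+2))\cong\Fl(g-2,g,2g+2)$, and then reduce to the statement that a line $\ell(A,B)$ lies in $\odd$ precisely when the $g$-plane $B$ is isotropic for both quadrics. The paper packages this last step as the homogeneous-space identity $\fano(\OGr(g-1,2g+2;q_i))\cong\SO_{2g+2}/\rP_{g-2,g}$ and then intersects for $i=1,2$; your direct linear-algebra proof of the isotropy criterion is the same content made explicit, and is arguably cleaner.

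More noteworthy is your treatment of the ``In particular'' clause. Your computation that the fibre of $\Fl(g-2,g,2g+2)\to\Gr(g,2g+2)$ is $\Gr(g-2,g)\cong\Gr(2,g)$ is correct, and this disagrees with the paper's asserted $\bP^{g-1}$ except when $g=3$. Your sanity check at $g=2$ already detects the discrepancy: the fibre is a point and $\fano(\odd)\cong\Jac(C)$ is a surface, whereas a $\bP^{1}$-bundle over $\Pic^2(C)$ would be a threefold. The expected dimension of the Fano variety of lines, namely $-\mathrm{K}_{\odd}\cdot\ell+\dim\odd-3=2+(3g-3)-3=3g-4$, agrees with $g+\dim\Gr(2,g)=g+2(g-2)$ and not with $g+(g-1)$. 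So you have in fact caught an error in the paper's statement; the fibre-product diagram is right, but the bundle is a $\Gr(2,g)$-bundle, which is $\bP^{g-1}$ only for $g=3$.
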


\begin{proof}
  The image of~$\Pic^g(C)$ in~$\Gr(g,2g+2)$ consists of~$g$\dash dimensional subspaces
  which are isotropic for the quadratic forms~$q_1$ and~$q_2$ defined on the~$2g+2$\dash dimensional vector space~$V$.
  Hence we can describe the fiber product as
  \begin{equation}
    \label{equation:fiber-product-description}
    \{E_2\subset E_1\subset V\mid\dim E_2=g-2,\dim E_1=g,q_1(E_1)=q_2(E_1)=0\}.
  \end{equation}
  Recall that~$\fano(\OGr(g-1,2g+2;q_i))$ is again a homogeneous variety,
  namely the quotient~$\SO_{2g+2}/\rP_{g-2,g}$,
  where~$\rP_{g-2,g}$ denotes the parabolic subgroup
  \begin{equation}
%    \dynkin[labels={1,,g-2,,g,g+1}]{D}{**.x*x*}.
  \end{equation}
  Now observe that
  \begin{equation}
    \begin{aligned}
      \fano(\odd)&=\fano(\OGr(g-1,2g+2;q_1)\cap\OGr(g-1,2g+2;q_2)) \\
      &\subset\fano(\Gr(g-1,2g+2))\cong\Fl(g-2,g,2g+2)
    \end{aligned}
  \end{equation}
  allows us to describe the Fano variety of lines
  using the homogeneous spaces~$\SO_{2g+2}/\rP_{g-2,g}$,
  giving the description from~\eqref{equation:fiber-product-description}.
\end{proof}

From \cref{section:periods}
we recall that to compute the second quantum period,
we are interested in the moduli space~$\odd_{0,1,\beta}$
for~$\beta=2$ (because the Fano index of~$\odd$ is~2).
Again we wish to use the geometry of
the ambient partial flag varieties,
now using the isomorphism~$\Gr(g-1,2g+2)_{0,1,2}\cong\Fl(g-2,g-1,g,2g+2)$.
This allows us to obtain the following identification,
whose proof is similar to that of
\cref{proposition:fano-variety-interpretation}.
\begin{proposition}
  \label{proposition:012-interpretation}
  The variety~$\odd_{0,1,\beta}$ fits in the fiber product diagram
  \begin{equation}
    \begin{tikzcd}
      \odd_{0,1,2} \arrow[r, hook] \arrow[d, "\ev"] & \Fl(g-2,g-1,g,2g+2) \arrow[d] \\
      \Pic^g(C) \arrow[r, hook] & \Gr(g,2g+2).
    \end{tikzcd}
  \end{equation}
\end{proposition}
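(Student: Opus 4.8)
The plan is to run the argument of \cref{proposition:fano-variety-interpretation} while keeping track of the additional marked point. Recall that $\odd$ is cut out inside $\Gr(g-1,2g+2)$ as $\OGr(g-1,2g+2;q_1)\cap\OGr(g-1,2g+2;q_2)$, with $q_1,q_2$ the pencil of quadrics determined by the hyperelliptic curve $C$. The first step is to describe $\Gr(g-1,2g+2)_{0,1,2}$, the space of line-class stable maps with one marked point to the Grassmannian: a line in $\Gr(g-1,2g+2)$ is the pencil $\{F\mid E_{g-2}\subseteq F\subseteq E_g\}$ attached to a flag $E_{g-2}\subset E_g$ of dimensions $g-2$ and $g$, and choosing a point on it amounts to choosing one member $E_{g-1}$ of the pencil. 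Since a genus-zero degree-one stable map to a projective variety is an embedded $\bP^1$ with no automorphisms (its domain cannot bubble or multiply cover in this degree), this identifies $\Gr(g-1,2g+2)_{0,1,2}$ with $\Fl(g-2,g-1,g,2g+2)$, the forgetful map $\odd_{0,1,2}\to\fano(\odd)$ corresponding to deleting the middle term of the flag.

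I would then impose the two quadratic conditions, exactly as in \cref{proposition:fano-variety-interpretation}. The pencil $\{F\mid E_{g-2}\subseteq F\subseteq E_g\}$ lies in $\OGr(g-1,2g+2;q_i)$ if and only if every $(g-1)$-dimensional member is $q_i$-isotropic, and a quadratic form on the $g$-dimensional space $E_g$ vanishing along a pencil of hyperplanes vanishes identically, so this holds if and only if $E_g$ is $q_i$-isotropic. Hence a line of $\Gr(g-1,2g+2)$ with a chosen point on it lies in $\odd$ precisely when $E_g\in\OGr(g,2g+2;q_1)\cap\OGr(g,2g+2;q_2)$, which by Theorem~2 of \cite{MR0429897} is the copy of $\Pic^g(C)$ in $\Gr(g,2g+2)$. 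This gives
\begin{equation}
  \odd_{0,1,2}\cong\{(E_{g-2}\subset E_{g-1}\subset E_g)\in\Fl(g-2,g-1,g,2g+2)\mid E_g\in\Pic^g(C)\},
\end{equation}
which is the asserted fiber product of $\Fl(g-2,g-1,g,2g+2)\to\Gr(g,2g+2)$ along $\Pic^g(C)\hookrightarrow\Gr(g,2g+2)$; the left-hand vertical map sends $(E_{g-2}\subset E_{g-1}\subset E_g)$ to $E_g$, i.e.~is the forgetful map to $\fano(\odd)$ followed by the projection of \cref{proposition:fano-variety-interpretation}. As a check, the fiber of $\Fl(g-2,g-1,g,2g+2)\to\Gr(g,2g+2)$ has dimension $(g-1)+(g-2)$, so the total space has dimension $g+(g-1)+(g-2)=3g-3$, agreeing with the expected dimension of $\odd_{0,1,2}$.

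The step I expect to require the most care is the reduction to the Grassmannian: one must check that the Kontsevich space $\Gr(g-1,2g+2)_{0,1,2}$ really is the smooth flag variety $\Fl(g-2,g-1,g,2g+2)$ with no extra boundary strata (automatic here, as explained), that the embedding $\odd\hookrightarrow\Gr(g-1,2g+2)$ carries the line class of $\odd$ to the Plücker line class so that $\odd_{0,1,2}$ is genuinely computed inside $\Gr(g-1,2g+2)_{0,1,2}$, and that the set-theoretic description above upgrades to a scheme-theoretic isomorphism, which is precisely where Desale--Ramanan's identification of the isotropic locus with $\Pic^g(C)$ is invoked. Given these points, everything else is the linear algebra already performed in \cref{proposition:fano-variety-interpretation}, applied now to flags with the middle term adjoined.
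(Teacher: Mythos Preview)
Your argument is correct and is exactly what the paper intends: it explicitly says the proof ``is similar to that of \cref{proposition:fano-variety-interpretation}'', and you have carried out precisely that, adjoining the middle term $E_{g-1}$ of the flag to record the marked point and otherwise repeating the isotropy analysis verbatim.

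One small remark. You correctly identify the left vertical map in the square as $(E_{g-2}\subset E_{g-1}\subset E_g)\mapsto E_g$, i.e.\ the projection to $\Pic^g(C)$ through $\fano(\odd)$. The paper labels this arrow ``$\ev$'', which is at odds with the large diagram immediately following the proposition, where $\ev$ goes to $\odd$ (via $E_{g-1}$) and the map to $\Pic^g(C)$ is the separate diagonal arrow. Your description is the one that makes the square a genuine fiber product, so you have silently resolved a labeling inconsistency in the statement.
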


Summarizing, we have the following picture
\begin{equation}
  \begin{tikzcd}
    \odd_{0,1,2} \arrow[rd, dashed] \arrow[rr, hook, "f_1"] \arrow[rddd, crossing over] \arrow[dd, "\ev"] & & \Fl(g-2,g-1,g,2g+2) \arrow[dd] \arrow[rd, "f_2"] \arrow[rddd] \\
    & \cP \arrow[ld, near start, "f"] \arrow[dd, crossing over] \arrow[rr, hook, crossing over] & & \Fl(g-1,g,2g+2) \arrow[dd] \arrow[ld] \\
    \odd \arrow[rr, hook] & & \Gr(g-1,2g+2) \\
    & \Pic^g(C) \arrow[rr, hook] & & \Gr(g,2g+2).
  \end{tikzcd}
\end{equation}

To prove the vanishing of the second quantum period for~$g\geq 3$,
we need the following property of the evaluation morphism.
\begin{proposition}
The morphism~$\ev:\odd_{0,1,2}\to\odd$ factors through the natural map~$f:\cP\to\odd$ from \cref{lemma:P-interpretation}.
\end{proposition}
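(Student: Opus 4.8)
The plan is to construct the dashed morphism $r\colon\odd_{0,1,2}\dashrightarrow\cP$ appearing in the commutative cube displayed just before the statement, and then to verify $f\circ r=\ev$. Once the flag\dash variety descriptions of $\odd_{0,1,2}$, of $\cP$ and of $\ev$ are in place, this is a formal consequence of the universal property of a fibre product, so I expect no serious obstacle.

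First I would record the standard description of pointed lines in the case at hand: a line on $\Gr(g-1,2g+2)$ is the pencil of $(g-1)$\dash dimensional subspaces lying between a fixed $(g-2)$\dash dimensional subspace and a fixed $g$\dash dimensional subspace, so $\Gr(g-1,2g+2)_{0,1,2}\cong\Fl(g-2,g-1,g,2g+2)$ — the isomorphism already used to prove \cref{proposition:012-interpretation} — and under it the evaluation morphism becomes the projection onto the middle, $(g-1)$\dash dimensional, term of the flag. Combining this with the closed conditions cutting out $\odd$ and $\Pic^g(C)$, \cref{proposition:012-interpretation} exhibits $\odd_{0,1,2}$, via $f_1$, as the fibre product of the projection $\Fl(g-2,g-1,g,2g+2)\to\Gr(g,2g+2)$ onto the largest term with $\Pic^g(C)\hookrightarrow\Gr(g,2g+2)$, and moreover the composite of $f_1$ with the projection $\Fl(g-2,g-1,g,2g+2)\to\Gr(g-1,2g+2)$ onto the middle term equals $\ev$ followed by the closed immersion $\odd\hookrightarrow\Gr(g-1,2g+2)$.

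Next I would invoke \cref{lemma:P-interpretation}, which realizes $\cP$ as the fibre product of the projection $\Fl(g-1,g,2g+2)\to\Gr(g,2g+2)$ onto the larger term with $\Pic^g(C)\hookrightarrow\Gr(g,2g+2)$, and in which $f\colon\cP\to\odd$ is the restriction of the projection $\Fl(g-1,g,2g+2)\to\Gr(g-1,2g+2)$ onto the smaller term. The forgetful morphism $f_2\colon\Fl(g-2,g-1,g,2g+2)\to\Fl(g-1,g,2g+2)$ drops the $(g-2)$\dash dimensional part of the flag and hence commutes with the projections of both flag varieties onto $\Gr(g,2g+2)$. Therefore $f_2\circ f_1\colon\odd_{0,1,2}\to\Fl(g-1,g,2g+2)$ and the structure morphism $\odd_{0,1,2}\to\Pic^g(C)$ of \cref{proposition:012-interpretation} agree over $\Gr(g,2g+2)$, and by the universal property of the fibre product defining $\cP$ they induce the sought morphism $r\colon\odd_{0,1,2}\to\cP$ over $\Pic^g(C)$.

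Finally I would compute $f\circ r$. Since $r$ lifts $f_2\circ f_1$ along $\cP\hookrightarrow\Fl(g-1,g,2g+2)$ and $f$ is the restriction of $\Fl(g-1,g,2g+2)\to\Gr(g-1,2g+2)$, the composite $f\circ r$ equals the projection $\Fl(g-2,g-1,g,2g+2)\to\Gr(g-1,2g+2)$ onto the middle term precomposed with $f_1$; by the second paragraph this is precisely $\ev$ followed by $\odd\hookrightarrow\Gr(g-1,2g+2)$, and as that embedding is a closed immersion we conclude $f\circ r=\ev$. The only ingredient here that is not purely formal is the classical identification of the moduli of pointed lines on $\Gr(g-1,2g+2)$ with $\Fl(g-2,g-1,g,2g+2)$ together with the identification of $\ev$ with the middle projection; given that and the commutative cube already drawn, the factorization drops out of the universal property with no further computation.
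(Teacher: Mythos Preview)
Your proposal is correct and follows essentially the same approach as the paper. The paper's proof is a one-line sketch (``it suffices to observe that the image of $f_2\circ f_1$ is contained in $\cP\subset\Fl(g-1,g,2g+2)$''), whereas you spell out precisely how the universal property of the fibre product defining $\cP$ produces the map $r$ and then verify $f\circ r=\ev$ by identifying $\ev$ with the middle projection of the flag; this is exactly the content the paper leaves implicit in the commutative cube.
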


\begin{proof}
  It suffices to observe that the image of~$f_2\circ f_1$
  is contained in~$\cP\subset\Fl(g-1,g,2g+2)$,
  which can be proven along the lines of \cref{proposition:fano-variety-interpretation}.
\end{proof}

Hence we obtain the following corollary,
which is an independent proof of the vanishing result from \cref{proposition:p2},
without referring to the agreement of classical and quantum periods.
\begin{corollary}
  \label{corollary:p2-vanishing}
  Let~$g\geq 3$. Then~$p_2=0$.
\end{corollary}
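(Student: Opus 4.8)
The plan is to combine the factorization of the evaluation morphism through~$\cP$, established just above, with an elementary dimension count. Because~$\HH_2(\odd,\bZ)\cong\bZ$ and the Fano index of~$\odd$ is~$2$, there is a unique class~$\beta$ with~$-\mathrm{K}_{\odd}\cdot\beta=2$; this is the class contributing to~$p_2$, and for it~$d=2$, so the descendant insertion~$\psi^{d-2}=\psi^0$ is trivial and~\eqref{equation:descendants} gives
\begin{equation}
  p_2 = \int_{[\odd_{0,1,2}]^{\mathrm{vir}}}\ev^*([\pt]),
\end{equation}
where~$[\odd_{0,1,2}]^{\mathrm{vir}}$ has dimension~$\int_\beta\mathrm{c}_1(\odd)-2+\dim\odd = 3g-3 = \dim\odd$.

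I would then record that~$\cP$ is, by construction, a~$\bP^{g-1}$\dash bundle over~$\Pic^g(C)$, so~$\dim\cP = g+(g-1) = 2g-1$, and for~$g\geq 3$ this satisfies~$2g-1<3g-3$. Since~$\ev\colon\odd_{0,1,2}\to\odd$ factors through~$f\colon\cP\to\odd$ by the preceding proposition, the image of~$\ev$ lies in~$f(\cP)$, a closed subvariety of~$\odd$ of dimension at most~$2g-1<\dim\odd$. Hence~$\ev_*[\odd_{0,1,2}]^{\mathrm{vir}}$ is a cycle class of dimension~$3g-3$ supported on a subvariety of strictly smaller dimension, so it vanishes, and the projection formula yields
\begin{equation}
  p_2 = \deg\bigl([\pt]\cap\ev_*[\odd_{0,1,2}]^{\mathrm{vir}}\bigr) = 0.
\end{equation}

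Put geometrically, a generic point of~$\odd$ avoids the image of~$\ev$, hence lies on no rational curve of the relevant class, and so~$p_2=0$. I do not expect a genuine obstacle: the one substantive ingredient, the factorization of~$\ev$ through~$\cP$, is already in hand, and what remains is routine bookkeeping. The only point worth flagging is that the inequality~$2g-1<3g-3$ fails exactly at~$g=2$, where~$\dim\cP=\dim\odd=3$ and indeed~$p_2=8\neq 0$, so the bound is sharp and the result is consistent with \cref{proposition:p2}.
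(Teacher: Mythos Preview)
Your proof is correct and follows the same approach as the paper: both use the factorization of~$\ev$ through~$\cP$ and then observe that~$\dim\cP=2g-1<3g-3=\dim\odd$ for~$g\geq 3$, so the image of~$\ev$ has positive codimension and~$p_2$ vanishes. Your version spells out the dimension count and the projection-formula step more explicitly than the paper's two-line argument, but the substance is identical.
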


\begin{proof}
  Because the image of~$\odd_{0,1,2}\to\odd$ under the evaluation morphism
  is contained in the image of~$\cP$,
  we have that the codimension of the image is at least~$g-2$.
  Hence the second quantum period vanishes for~$g\geq 3$, by the definition of the quantum period.
\end{proof}

\subsection{Fourth period}
The next interesting period is~$\pi_4$.
In the case we can give a closed formula for it.

\begin{proposition}
  \label{proposition:p4}
  The fourth coefficient of the period of the potential~$\widetilde{W}_{\graph,1}$ associated to~$\odd$ is given by
  \begin{equation}
    \pi_4(\widetilde{W}_{\graph,1})=
    \begin{cases}
      216 & g=2 \\
      192(g-1) & g\geq 3.
    \end{cases}
  \end{equation}
\end{proposition}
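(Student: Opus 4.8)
The plan is to compute the constant term of $\widetilde{W}_{\graph,1}^4$ directly, using the fact that $\widetilde{W}_{\graph,1} = \sum_{v\in V}\widetilde{W}_{v,c}$ is a sum of vertex potentials, where exactly one vertex $v_0$ is colored and all others are uncolored. For a connected graph this constant term is a combinatorial count, and by \cref{theorem:graphtqft} it is independent of the choice of trivalent graph of genus $g$, so I would fix a convenient representative: for $g=2$ the Theta graph with one colored vertex (from \cite[Example~2.6]{gp-tqft}), and for $g\geq 3$ a graph that is easy to analyze locally, e.g.\ a ``chain'' of $2g-2$ vertices. Expanding $\widetilde{W}_{\graph,1}^4 = \bigl(\sum_v \widetilde{W}_{v,c}\bigr)^4$ gives a sum over $4$-tuples of (multi)vertices, and the constant term picks out exactly those monomial products that cancel to $x^0$.

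**Key steps.** First, record that each uncolored vertex potential $\widetilde{W}_{v,0}=\frac{a}{bc}+\frac{b}{ac}+\frac{c}{ab}+\frac{1}{abc}$ and the colored one $\widetilde{W}_{v_0,1}=\frac{ab}{c}+\frac{ac}{b}+\frac{bc}{a}+abc$, each supported on $4$ monomials whose exponent vectors (in the $\widetilde N_\graph$-coordinates) are $\pm e_i\pm e_j\pm e_k$ restricted to the edges at $v$. Second, organize the expansion of the $4$th power by the multiset of vertices appearing: the possible shapes are $(v,v,v,v)$, $(v,v,v,w)$, $(v,v,w,w)$, $(v,v,w,z)$, $(v,w,z,t)$. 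For the constant term, the total exponent on every edge variable must vanish; since each edge lies in at most two vertex-potentials, only ``local'' configurations survive, and one shows the dominant contribution comes from terms where the four chosen vertices are $v,v,w,w$ for adjacent vertices $v,w$ (the ``edge'' contributions) together with the all-same-vertex terms $(v,v,v,v)$ and the adjacent-triple/quadruple terms; configurations spread over three or four distinct vertices forming a path also contribute and must be counted. Third, for $g\geq 3$ exploit that the colored vertex's monomials all have odd total $e_{v_0}$-degree contribution on its edges, so—as in the proof of \cref{proposition:p2}—they never appear in a constant-term product with the uncolored potentials alone; the colored vertex can only contribute in pairs, and tracking these pairings yields a fixed additive constant independent of $g$, while the uncolored bulk contributes linearly in the number of uncolored vertices, hence linearly in $g$. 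Fourth, pin down the two constants: evaluate the $g=2$ case by brute expansion of the colored Theta potential (getting $216$), and compute the $g=3$ value (e.g.\ on the tetrahedron or a small chain) to determine the slope and intercept of the affine function $192(g-1)$; since \cref{theorem:graphtqft} guarantees graph-independence and the structural analysis guarantees the answer is affine-linear in $g$ for $g\geq 3$, two data points (say $g=3,4$, cross-checked against the tables) suffice.

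**Main obstacle.** The hard part is the honest bookkeeping of which $4$-tuples of monomials cancel: one must enumerate, for each local graph configuration (a vertex, an edge, a path of length $2$ or $3$, a triangle, a loop), how many sign-choices $(s_i,s_j,s_k)$ across the four factors sum to zero on every incident edge, and verify that the count is genuinely local—i.e.\ that no ``long-range'' cancellation across non-adjacent parts of the graph contributes. The subtlety is that an edge variable $x_e$ can receive exponent contributions of $\pm 1$ from the two vertex potentials at its endpoints with multiplicity up to $2$ each (when a vertex is chosen twice), so the per-edge cancellation condition couples neighboring choices; carefully reducing this to a finite local computation, and then summing the local counts with the correct multiplicities over all edges/vertices/paths of the chosen graph, is where essentially all the work lies. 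I would double-check the final formula against the explicit values tabulated in \cref{table:quantum-periods-odd} for small $g$.
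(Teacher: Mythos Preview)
Your strategy is workable in principle but considerably more laborious than the paper's, and it misses the one observation that makes the count clean: grade the graph potential by \emph{total degree} (the sum of all exponents). On a loop-free graph every uncolored vertex contributes one monomial of total degree~$3$ and three of total degree~$-1$, while the unique colored vertex contributes the opposite degrees. Since a constant term has total degree~$0$, the paper argues that for $g\geq 3$ any contributing $4$-tuple must have degree profile $(\pm 3,\mp 1,\mp 1,\mp 1)$: one monomial of degree~$\pm 3$ (there is exactly one per vertex, hence $2g-2$ in all) together with three of degree~$\mp 1$. For each such degree-$\pm 3$ monomial the paper then counts four cancelling triples---one using the three companions at the same vertex, and one for each of the three incident edges, using one companion at the same vertex and two at the neighbouring vertex---and with the $4!$ orderings this yields $(2g-2)\cdot 4\cdot 4!=192(g-1)$. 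The case $g=2$ is exceptional because the two vertices share all their variables, allowing extra cancellations.

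By contrast, your plan organises by vertex-multisets, asserts a locality lemma you explicitly do not prove, deduces affine-linearity in~$g$, and then interpolates the affine function from tabulated values at $g=3,4$. This outsources the actual arithmetic to the tables rather than establishing the formula directly, and the locality verification you flag as the ``main obstacle'' is exactly the work that the total-degree filtration renders unnecessary: once you know a contributing $4$-tuple contains a unique degree-$\pm 3$ monomial, the cancellation is automatically confined to that vertex and its immediate neighbours, so locality and the linear dependence on $\#V=2g-2$ come for free. If you want to keep the structure of your argument, the cleanest fix is simply to insert the total-degree constraint at the start; the vertex-multiset bookkeeping then collapses to the paper's short count.
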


\begin{proof}
  The total degree of a monomial in the graph potential is either~3 or~$-1$,
  except for two monomials which are contributed by the (by assumption) unique colored vertex,
  whose total degrees are~1 and~$-3$.
  Let us moreover consider a graph potential associated to a graph without loops,
  so that all coefficients are~1.

  To understand the constant term of the fourth power of the graph potential,
  we consider the monomials of degree~$\pm3$.
  To obtain a constant term,
  we need to multiply this monomial with either a monomial of degree~$\mp3$,
  or with~3~monomials of degree~$\mp1$.
  If~$g=2$, then the colored vertex and uncolored vertex share the same variables,
  so it is possible to multiply monomials of degree~$\pm3$ and get a constant term.
  But this is impossible if~$g\geq 3$,
  and hence we assume from now on that we only multiply monomials of degree~$\pm3$ with monomials of degree~$\mp1$.

  There are~$2g-2$ monomials of degree~$\pm3$, one for each vertex.
  There are~4 ways to cancel this monomial:
  one way is by using the~3~distinct monomials contributed by the same vertex in the vertex potentials
  (see~\eqref{equation:vertex-potential} or \cite[Example~2.5]{gp-tqft}).
  The other three ways are by considering the edge potential for the~3~edges going out of the vertex
  This is an alternative way of writing the graph potential
  as a sum over the set~$E$ of edges instead of the set~$V$ of vertices,
  see \cite[\S3.3]{gp-decomp}.
  One can cancel the monomial by multiplying it with
  one monomial from the same vertex and two monomials from the neighbor.
  There are~$4!$ choices for doing this, hence the statement follows.
\end{proof}

%In fact, this proof also works in the uncolored case. But g=2 _and_ g=3 are a bit special now it seems, and g=3 being special isn't clear yet to me.

\subsection{General patterns}
\label{subsection:general-patterns}
The following result is completely general,
and immediate from the definition~\eqref{equation:descendants}.
Because~$\odd$ has index~2,
we have that~$\langle\beta,-\mathrm{K}_X\rangle$ is always divisible by~2.

\begin{lemma}
  \label{lemma:odd-vanishing}
  The odd-indexed coefficients of the quantum period of~$\odd$ vanish:
  \begin{equation}
    p_{2k+1}(\odd) = c_{2k+1}(\odd) = 0.
  \end{equation}
\end{lemma}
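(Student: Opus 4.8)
The plan is to note that for odd $d$ the index set of the sum defining $p_d(\odd)$ is empty, so that the statement reduces to a parity observation about Maslov indices together with the bookkeeping convention $p_1 := 0$.

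First I would recall the two normalizations already in play. From the dimension count preceding~\eqref{equation:descendants}, $[X_{0,1,\beta}]^{\mathrm{vir}}$ has dimension $\dim\odd + \langle\beta,-\mathrm{K}_{\odd}\rangle - 2$, so the descendant $p_\beta = \int \ev^*([\pt])\cup\psi^{d-2}$ vanishes unless $\langle\beta,-\mathrm{K}_{\odd}\rangle = d$; accordingly the summation condition $\mu(\beta)=2d$ in the definition of $p_d$ amounts to $\mu(\beta) := 2\langle\beta,-\mathrm{K}_{\odd}\rangle = 2d$. Second, $\odd$ is a Fano variety of index $2$ with $\Pic(\odd)=\bZ\Theta$ by Dr\'ezet--Narasimhan, so that $-\mathrm{K}_{\odd}=2\Theta$; hence $\langle\beta,-\mathrm{K}_{\odd}\rangle = 2\langle\beta,\Theta\rangle$ is even for every $\beta\in\HH_2(\odd,\bZ)$, and therefore $\mu(\beta)=4\langle\beta,\Theta\rangle\in4\bZ$.

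Now let $d=2k+1$. If $k=0$, then $p_1(\odd)=0$ by the convention fixed right after~\eqref{equation:descendants}, and $c_1(\odd)=1!\cdot p_1(\odd)=0$. If $k\geq1$, then $d\geq2$ and $p_d(\odd)=\sum_{\mu(\beta)=2d}p_\beta$; since $2d=4k+2$ is not divisible by $4$ whereas every $\mu(\beta)$ is, the index set of this sum is empty, so $p_{2k+1}(\odd)=0$ and hence $c_{2k+1}(\odd)=(2k+1)!\,p_{2k+1}(\odd)=0$. There is no genuine obstacle here; the only points requiring care are confirming that the normalization of $\mu$ and of the Fano index are compatible with the convention in which $p_d$ is written, and treating the case $d=1$ via the explicit convention $p_1:=0$ rather than via the emptiness of the index set.
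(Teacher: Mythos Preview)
Your proof is correct and follows essentially the same approach as the paper: the paper simply notes that the result is immediate from the definition~\eqref{equation:descendants} together with the fact that $\odd$ has Fano index~$2$, so that $\langle\beta,-\mathrm{K}_X\rangle$ is always even. Your write-up is in fact more careful than the paper's one-line justification, making explicit the normalization $\mu(\beta)=2\langle\beta,-\mathrm{K}_X\rangle$ and separately handling the conventional case $p_1:=0$.
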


On the other hand, for the periods of graph potentials we have the following result.
It follows immediately from the observation that
the total degrees of the monomials in the graph potentials
are~$\pm1,\pm3$, so that no product of an odd number of them can have degree~0.

Let us write~$[W]_0$ for the constant coefficient of a Laurent polynomial~$W$.

\begin{lemma}
  \label{lemma:odd-vanishing-graph}
  For any odd or even coloring~$c$ of a graph $\graph$
  the odd-indexed coefficients of the period of the graph potential vanish:
  \begin{equation}
    [W_{\graph,c}(z)^{2k+1}]_{z^0} = 0.
  \end{equation}
\end{lemma}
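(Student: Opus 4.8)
The plan is a parity\dash of\dash degree count, essentially unpacking the remark preceding the lemma. It is convenient to argue with $\widetilde{W}_{\graph,c}$ on $\widetilde{T}_\graph^\vee$, since the powers of $\widetilde{W}_{\graph,c}$ have the same constant terms as the powers of $W_{\graph,c}$: the latter is the descent of the former along the isogeny $\widetilde{T}_\graph^\vee\to T_\graph^\vee$, whose pullback on coordinate rings is the inclusion $\bC[N_\graph]\hookrightarrow\bC[\widetilde{N}_\graph]$ induced by $N_\graph\hookrightarrow\widetilde{N}_\graph$, hence identifies monomials and in particular preserves the coefficient of $z^0$. So it suffices to prove $[\widetilde{W}_{\graph,c}^{\,2k+1}]_{z^0}=0$.

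First I would introduce the \emph{total degree} of a Laurent monomial $\prod_i x_i^{a_i}$ in $\bZ[\widetilde{N}_\graph]$, namely $\sum_i a_i$. This is the linear functional on $\widetilde{N}_\graph=\CC^1(\graph,\bZ)$ obtained by pairing with the all\dash ones $1$\dash chain in $\CC_1(\graph,\bZ)$, so it is additive under multiplication of monomials. From the explicit formula \eqref{equation:vertex-potential}, every monomial of the vertex potential $\widetilde{W}_{v,c}$ has the form $x_i^{(-1)^{s_i}}x_j^{(-1)^{s_j}}x_k^{(-1)^{s_k}}$ with $s_i+s_j+s_k=c(v)$ in $\bF_2$, and its total degree is $(-1)^{s_i}+(-1)^{s_j}+(-1)^{s_k}\in\{-3,-1,1,3\}$, which is odd. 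Since this holds for every vertex and for an arbitrary coloring $c$ (in particular for any odd or even one), every monomial occurring in $\widetilde{W}_{\graph,c}=\sum_{v\in V}\widetilde{W}_{v,c}$ has odd total degree.

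The conclusion is then immediate: expanding $\widetilde{W}_{\graph,c}^{\,2k+1}$ and collecting monomials, each resulting monomial is a product of $2k+1$ monomials of $\widetilde{W}_{\graph,c}$, so by additivity its total degree is a sum of $2k+1$ odd integers, hence odd, hence nonzero. In particular the monomial $z^0$, which has total degree $0$, never occurs, so its coefficient vanishes, giving $[\widetilde{W}_{\graph,c}^{\,2k+1}]_{z^0}=0$ and therefore $[W_{\graph,c}(z)^{2k+1}]_{z^0}=0$. This is a one\dash step argument with essentially no obstacle; the only points requiring a word of care are that ``total degree'' is genuinely additive and that passing between $\widetilde{W}_{\graph,c}$ and $W_{\graph,c}$ does not change the constant term of any power, both of which are immediate from the set\dash up in \cref{subsection:graph-potentials}.
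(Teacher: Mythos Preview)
Your argument is correct and is exactly the paper's approach: the paper simply observes that every monomial of the graph potential has total degree in $\{\pm1,\pm3\}$, so a product of an odd number of them cannot have total degree zero. Your write-up is a careful unpacking of this one-line observation, including the (harmless) passage between $\widetilde{W}_{\graph,c}$ and $W_{\graph,c}$.
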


This explains why in \cref{table:quantum-periods-odd,table:quantum-periods-even} at the end of this appendix
we have not listed the odd periods.
For the case of an odd coloring this is of course consistent with
\cref{proposition:equality-periods}
which gives the equality of the periods from
\cref{lemma:odd-vanishing,lemma:odd-vanishing-graph}.

We also have the following vanishing result.
\begin{lemma}
  \label{lemma:4k+2-vanishing}
  Let~$g\geq 2$.
  Then~$4\mid[W_{\graph,0}(z)^k]_{z^0}$.
\end{lemma}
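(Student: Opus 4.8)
The plan is to reduce to a convenient graph via the graph potential TQFT and then exhibit a group of order divisible by $4$ acting on the monomials that contribute to the constant term. By \cref{theorem:graphtqft} the period $[W_{\graph,0}(z)^k]_{z^0}$ depends only on $g$ (the coloring being trivial), so it suffices to prove the divisibility for one trivalent graph of genus $g$ and every $k\geq 1$ (for $k=0$ the constant term is $1$, and for $k=1$ it is $0$). I would take $\graph$ to be a bridgeless cubic graph of genus $g$; these exist for all $g\geq 2$ — the theta graph for $g=2$, $K_4$ for $g=3$, and the prisms $C_{g-1}\times K_2$ for $g\geq 4$ — and a bridgeless cubic graph is automatically loopless, since a loop at a vertex would make its remaining incident edge a bridge. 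Since $\widetilde W_{\graph,0}$ is the pullback of $W_{\graph,0}$ along the isogeny $\widetilde T^\vee_\graph\to T^\vee_\graph$, the two Laurent polynomials have the same coefficient of $z^0$ in every power, so I may work with $\widetilde W_{\graph,0}\in\bZ[\widetilde N_\graph]$. (I note in passing that, since every monomial of $\widetilde W_{\graph,0}$ has total degree $3$ or $-1$, a contribution to $[\widetilde W_{\graph,0}^{\,k}]_{z^0}$ already forces $4\mid k$, so the genuine content is the case $4\mid k$; the argument below handles all $k\geq1$ uniformly.)

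For such a loopless $\graph$ set $G_v:=\{\epsilon\in\{\pm1\}^{E(v)}\mid\prod_e\epsilon_e=1\}$ for each vertex $v$, so that $\widetilde W_{v,0}=\sum_{\epsilon\in G_v}x^{m(v,\epsilon)}$ with $m(v,\epsilon)=\sum_{e\in E(v)}\epsilon_e\,e^\vee\in\widetilde N_\graph$, a sum of four distinct monomials each with coefficient $1$, and $\widetilde W_{\graph,0}=\sum_{v}\sum_{\epsilon\in G_v}x^{m(v,\epsilon)}$. Expanding the $k$-th power, $[\widetilde W_{\graph,0}^{\,k}]_{z^0}$ counts the \emph{configurations} $\mathcal C=\bigl((v_1,\epsilon^{(1)}),\dots,(v_k,\epsilon^{(k)})\bigr)$ with $\epsilon^{(j)}\in G_{v_j}$ and $\sum_{j:\,e\in E(v_j)}\epsilon^{(j)}_e=0$ for every edge $e$. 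On this finite set I would let $T\in Z_1(\graph,\bF_2)$ act by $\tau_T\mathcal C:=\bigl((v_j,\tau_T\epsilon^{(j)})\bigr)_j$, where $(\tau_T\epsilon^{(j)})_e=(-1)^{[e\in T]}\epsilon^{(j)}_e$: since $T$ is a $\bZ/2$-cycle, $|T\cap E(v)|$ is even for every $v$, whence $\tau_T\epsilon^{(j)}\in G_{v_j}$, and multiplying each edge equation by $(-1)^{[e\in T]}$ shows the constraint is preserved. This defines an action of the group $Z_1(\graph,\bF_2)\cong\bF_2^{\oplus g}$ of order $2^g\geq 4$.

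Finally I would estimate the orbit sizes. For a configuration $\mathcal C$ with support $S:=\{v_1,\dots,v_k\}\subseteq V$ one checks that $\tau_T\mathcal C=\mathcal C$ precisely when $T$ uses no edge meeting $S$, so the stabiliser of $\mathcal C$ is the cycle space of the subgraph $\graph[V\setminus S]$ induced on $V\setminus S$ and the orbit has size $2^{\,g-b_1(\graph[V\setminus S])}$. It remains to see that $b_1(\graph[V\setminus S])\leq g-2$ whenever $S\neq\emptyset$, which is where bridgelessness is used: a connected bridgeless cubic graph has no cut-vertex (deleting a cut-vertex of a cubic graph yields components attached by $1{+}1{+}1$ or $2{+}1$ edges, and in either case a single edge forms a bridge), so deleting the first vertex of $S$ leaves the graph connected and drops $b_1=\#E-\#V+1$ from $g$ to $g-2$, while deleting any further vertex of degree $d$ changes $b_1$ by $-d+1+\Delta c$ with $0\le\Delta c\le d-1$ and hence never increases it. Therefore every orbit has size a power of $2$ at least $4$, and summing over orbits gives $4\mid[\widetilde W_{\graph,0}^{\,k}]_{z^0}=[W_{\graph,0}(z)^k]_{z^0}$. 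The main obstacle is exactly this last estimate, and it is why the reduction to a bridgeless cubic model is essential: on a graph with a loop or a separating edge the action $\tau_T$ fixes the configurations supported away from that loop or pendant block, so a single orbit then need not have size divisible by $4$ even though the total still is.
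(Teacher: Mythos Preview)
Your proof is correct, but it proves a genuinely stronger fact than the paper does, by a completely different method. The paper's argument is the one-line observation you make in your parenthetical aside: every monomial of $\widetilde W_{\graph,0}$ has total degree $3$ or $-1$, hence total degree $\equiv -1\pmod 4$, so $\widetilde W_{\graph,0}^{\,k}$ is homogeneous of total degree $-k\pmod 4$ and its constant term vanishes unless $4\mid k$. That vanishing is all that the subsequent corollary uses; the lemma's label ``$4k{+}2$-vanishing'' and the way it is invoked downstream make clear that this is the intended content, and the literal phrasing ``$4\mid[W_{\graph,0}^k]_{z^0}$'' is loose (as you note, it is false at $k=0$). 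In other words, your parenthetical \emph{is} the paper's proof.

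Your main argument instead takes the literal reading at face value and shows that for every $k\geq 1$ the constant term is a multiple of $4$, including when $4\mid k$ and the term is nonzero. The action of $Z_1(\graph,\bF_2)\cong\bF_2^{\,g}$ on configurations is set up correctly, the stabiliser is indeed the cycle space of $\graph[V\setminus S]$, and the key estimate $b_1(\graph[V\setminus S])\leq g-2$ for $S\neq\emptyset$ follows from the fact that a connected bridgeless cubic graph is $2$-connected, as you argue. One cosmetic wrinkle: in your inductive step the bound $0\le\Delta c\le d-1$ fails when the vertex being deleted has already become isolated ($d=0$, $\Delta c=-1$), but then $\Delta b_1=0$ and the monotonicity of $b_1$ still holds. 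So what your route buys is a sharper divisibility result that the paper does not claim; what the paper's route buys is a two-sentence proof of exactly what is needed.
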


\begin{proof}
  All monomials of the even graph potential $W := W_{\graph,0}$ have total degree~3 and~$-1$,
  so $W^k$ is homogeneous of degree $-k\in\mathbb{Z}/4\mathbb{Z}$.
  On the other hand, the constant term $z^0$ is homogeneous of degree $0\in\mathbb{Z}/4\mathbb{Z}$.
  % In computing the constant term of $W^k$ we are
  % choosing~$a$ monomials of positive degree,
  % and~$b$ monomials of negative degree,
  % such that~$k = a+b$
  % The product needs to be constant, so~$3a-b=0$.
  % But there are no integer solutions to these equations.
\end{proof}

Finally we have the following general result, explaining why the lower triangular parts of the tables agree.

\begin{proposition}
  Let~$g\geq 2$ and~$k<2g-2$.
  Then the~$k$th coefficient of the period of the graph potential for the even coloring
  equals that of the odd coloring.
\end{proposition}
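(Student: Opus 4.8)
The plan is to prove the a priori stronger statement that, for $k<2g-2=\#V$, the $k$th classical period $[\widetilde{W}_{\graph,c}^{k}]_{z^{0}}$ is independent of the coloring $c$; applying this to the even coloring $c=0$ and to any odd coloring $c$ then gives the proposition. Since the colorings form the cube $\bF_{2}^{V}$, in which any two elements are joined by flipping one coordinate at a time, it suffices to fix a vertex $v_{0}\in V$ and show that two colorings $c,c'$ agreeing away from $v_{0}$ have the same $k$th period once $k<\#V$. First I would expand multinomially, using $\widetilde{W}_{\graph,c}=\sum_{v\in V}\widetilde{W}_{v,c(v)}$:
\[
  [\widetilde{W}_{\graph,c}^{k}]_{z^{0}}
  =\sum_{\substack{N=(N_{v})_{v\in V}\\ \sum_{v}N_{v}=k}}
  \frac{k!}{\prod_{v\in V}(N_{v}!)}\,\Gamma_{c}(N),\qquad
  \Gamma_{c}(N):=\Big[\,\prod_{v\in V}\widetilde{W}_{v,c(v)}^{N_{v}}\,\Big]_{z^{0}}.
\]
For colorings agreeing away from $v_{0}$ the only factor in which $\Gamma_{c}(N)$ and $\Gamma_{c'}(N)$ can differ is the one indexed by $v_{0}$, so the whole reduction comes down to proving $\Gamma_{c}(N)=\Gamma_{c'}(N)$ for each multi-index $N$ of weight $k<\#V$. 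If $v_{0}$ lies outside the support $S:=\{v:N_{v}>0\}$ this is clear; otherwise note that $|S|\le k<\#V$ forces $S\neq V$, so I may pick $v_{\ast}\in V\setminus S$.

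The tool is an edge-inversion automorphism. I would choose a simple path $v_{0}=u_{0},u_{1},\dots,u_{r}=v_{\ast}$ in the (connected) graph $\graph$, with edges $f_{1},\dots,f_{r}$; these are pairwise distinct, and none is a loop. Let $\sigma$ be the automorphism of $\bZ[\widetilde{N}_{\graph}]$ inverting the variables $x_{f_{1}},\dots,x_{f_{r}}$ and fixing all other edge variables; since $\sigma$ negates the corresponding coordinates of $\widetilde{N}_{\graph}$, it fixes the zero character and therefore preserves the coefficient of $z^{0}$. Then I would record the local effect of $\sigma$ on each factor $\widetilde{W}_{v,c(v)}^{N_{v}}$. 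At an interior vertex $u_{i}$ ($0<i<r$) the only path edges incident to $u_{i}$ are $f_{i}$ and $f_{i+1}$, so $\sigma$ inverts exactly two of the three edge variables at $u_{i}$ (in particular $u_{i}$ carries no loop, since two of its three edge-slots are already used by the path); by the defining sum~\eqref{equation:vertex-potential} inverting two of $x_{i},x_{j},x_{k}$ flips two of $s_{i},s_{j},s_{k}$, preserving $s_{i}+s_{j}+s_{k}\bmod 2$, so $\sigma$ permutes the monomials of $\widetilde{W}_{u_{i},c(u_{i})}$ among themselves and fixes it. At $v_{0}$, $\sigma$ inverts the single incident edge variable $x_{f_{1}}$ (the non-loop edge, when $v_{0}$ carries a loop), which flips one $s$, reverses $s_{i}+s_{j}+s_{k}\bmod 2$, and hence turns $\widetilde{W}_{v_{0},c(v_{0})}$ into $\widetilde{W}_{v_{0},c'(v_{0})}$. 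Every other vertex is incident to no path edge and is left untouched, and in particular $\widetilde{W}_{v_{\ast},c(v_{\ast})}^{N_{v_{\ast}}}=1$ because $v_{\ast}\notin S$. Assembling these effects gives $\sigma\big(\prod_{v}\widetilde{W}_{v,c(v)}^{N_{v}}\big)=\prod_{v}\widetilde{W}_{v,c'(v)}^{N_{v}}$, whence $\Gamma_{c}(N)=\Gamma_{c'}(N)$, and summing over $N$ concludes.

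I expect the only genuinely delicate part to be the bookkeeping around loops and the endpoints of the path: one has to be sure that no interior vertex of the path can carry a loop (automatic, as above), that at a loop vertex $v_{0}$ the step ``inverting one incident edge toggles the coloring'' is applied to the non-loop edge, and that $v_{\ast}$ is genuinely unused, i.e.\ $N_{v_{\ast}}=0$ — which is exactly where the hypothesis $k<\#V$ enters (it forces every multi-index of weight $k$ to have proper support). Everything else reduces to the elementary fact, immediate from~\eqref{equation:vertex-potential}, that inverting an even (respectively odd) number of the three edge variables at a vertex preserves (respectively reverses) that vertex's parity, together with the multinomial expansion above.
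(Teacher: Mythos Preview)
Your argument is correct and is essentially the same as the paper's: both observe that a product of $k<\#V$ vertex-potential monomials must omit some vertex $v_\ast$, and then use an edge-inversion automorphism of the torus to transport the coloring change to $v_\ast$, where it is irrelevant; the paper packages this automorphism as the action of $\CC_1(\graph,\bF_2)$ (citing \cite[Corollary~2.9]{gp-tqft}), while you construct it explicitly as inversion along a simple path. One small slip: the phrase ``every other vertex is incident to no path edge'' is false for the endpoint $v_\ast$ (where $\sigma$ does toggle the vertex potential), but since you immediately note $N_{v_\ast}=0$ the argument is unaffected.
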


Observe that if~$k$ is odd then we already knew that the~$k$th period is zero in both cases.

\begin{proof}
  By \cite[Corollary~2.18]{gp-tqft}
  we can consider a single graph~$\graph$ of genus~$g$,
  with an even (resp.~odd) coloring.
  This gives the graph potentials~$W_0$ (resp.~$W_1$).
  Moreover by \cite[Corollary~2.9]{gp-tqft} we have that the action of~$\CC_1(\graph,\mathbb{Z}/2\mathbb{Z})$
  preserves the graph potential up to biregular automorphism of the torus.

  By assumption we have that~$k$ is strictly less than the number of vertices in the graph~$\graph$.
  If~$m_1\cdots m_k$ is a product of monomials
  (which can also be considered as a sum of vectors in~$N_\graph$)
  contributing to the constant term of~$W_1^k$,
  taken from the product expansion of~$W_1^k$,
  it cannot use monomials from every vertex of the graph~$\graph$.

  Let~$v$ be an unused vertex.
  By the action of~$\CC_1(\graph,\mathbb{Z}/2\mathbb{Z})$
  we can assume that the colored vertex is~$v$,
  without changing the value of~$m_1\cdots m_k$,
  as by assumption this is a constant
  and hence invariant under biregular automorphisms of the torus.
  Therefore this is the same contribution as obtained from~$W_0^k$, and we are done.
\end{proof}

Combining this with \cref{lemma:4k+2-vanishing} we get
\begin{corollary}
  Let~$k\geq 0$. Let~$g\geq 2$. If~$4k+2<2g-2$ then the~$4k+2$th period of the graph potential in the odd case vanishes.
\end{corollary}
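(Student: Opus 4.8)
The plan is to obtain the statement as an essentially immediate consequence of the preceding proposition together with \cref{lemma:4k+2-vanishing}, so the argument will be short. Write $m := 4k+2$. The hypothesis $4k+2 < 2g-2$ is precisely the hypothesis $m < 2g-2$ of the preceding proposition, so I would first invoke that proposition to identify the $m$th coefficient of the period of the odd graph potential with the $m$th coefficient of the even one,
\[
  [W_{\graph,1}(z)^{m}]_{z^0} = [W_{\graph,0}(z)^{m}]_{z^0}.
\]

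Next I would quote \cref{lemma:4k+2-vanishing}, or rather the grading argument behind it: every monomial of the \emph{even} graph potential $W_{\graph,0}$ has total degree $3$ or $-1$, both $\equiv -1 \pmod 4$, so $W_{\graph,0}(z)^{m}$ is homogeneous of total degree $-m \bmod 4$. Since $m = 4k+2 \equiv 2 \pmod 4$, this residue is $2 \neq 0$, whereas the constant term $z^0$ sits in degree $0$; hence $[W_{\graph,0}(z)^{m}]_{z^0} = 0$. Combining this with the displayed identity gives $[W_{\graph,1}(z)^{m}]_{z^0} = 0$, which is the assertion.

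I do not anticipate any real obstacle; the only subtlety worth flagging is why the detour through the even coloring is needed. The odd graph potential $W_{\graph,1}$ is only homogeneous modulo $2$, not modulo $4$, because its unique colored vertex contributes two monomials of total degrees $1$ and $-3$ (both $\equiv 1 \pmod 4$) in addition to the degree-$3$ and degree-$(-1)$ monomials coming from the uncolored vertices; so the clean mod-$4$ homogeneity argument cannot be run directly on $W_{\graph,1}$. It is exactly the preceding proposition --- proved via the $\CC_1(\graph,\mathbb{Z}/2\mathbb{Z})$-action together with the observation that any product of fewer than $\#V = 2g-2$ of the vertex monomials must omit some vertex --- that lets one move the colored vertex out of the support of the relevant products and reduce to the even coloring, where the mod-$4$ grading is available.
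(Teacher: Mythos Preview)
Your argument is correct and matches the paper's approach: the paper simply states that the corollary follows by combining the preceding proposition with \cref{lemma:4k+2-vanishing}, which is exactly the two-step reduction you spell out. Your additional paragraph explaining why the detour through the even coloring is necessary (the odd potential is only homogeneous mod~$2$, not mod~$4$) is a useful clarification that the paper leaves implicit.
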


%\begin{landscape}
\begin{sidewaystable}
  \vspace{10cm}
  \centering
  \begin{tabular}{cccccccccc}
    \toprule
    $g$ & $p_0$ & $p_2$ & $p_4$ & $p_6$ & $p_8$     & $p_{10}$   & $p_{12}$       & $p_{14}$        & $p_{16}$             \\
    \midrule
    2   & 1     & 8     & 216   & 8000  & 343000    & 16003008   & 788889024      & 40424237568     & 2131746903000        \\
    3   & 1     & 0     & 384   & 23040 & 3265920   & 435456000  & 68263641600    & 11300889600000  & 1984905402480000     \\
    4   & 1     & 0     & 576   & 11520 & 8769600   & 1175731200 & 445839609600   & 115772770713600 & 41211916193448000    \\
    5   & 1     & 0     & 768   & 0     & 16853760  & 928972800  & 1378578432000  & 295708763750400 & 237075779068128000   \\
    6   & 1     & 0     & 960   & 0     & 27518400  & 232243200  & 3112327680000  & 299893321728000 & 795162277629720000   \\
    7   & 1     & 0     & 1152  & 0     & 40763520  & 0          & 5892216422400  & 133905855283200 & 2006716647119184000  \\
    8   & 1     & 0     & 1344  & 0     & 56589120  & 0          & 9963493478400  & 22317642547200  & 4248683870158728000  \\
    9   & 1     & 0     & 1536  & 0     & 74995200  & 0          & 15571407667200 & 0               & 7983708676751808000  \\
    10  & 1     & 0     & 1728  & 0     & 95981760  & 0          & 22961207808000 & 0               & 13760135544283128000 \\
    \bottomrule
  \end{tabular}
  \caption{Period sequence for the odd graph potential \\ Quantum periods for $\moduli_C(2,\mathcal{L})$ \\ (see also \cite[Table~1]{gp-tqft})}
  \label{table:quantum-periods-odd}

  \vspace{1cm}

  \centering
  \begin{tabular}{ccccccccccc}
    \toprule
    $g$ & $p_0$ & $p_2$ & $p_4$ & $p_6$ & $p_8$     & $p_{10}$   & $p_{12}$       & $p_{14}$        & $p_{16}$             & $p_{18}$ \\
    \midrule
    2   & 1     & 0     & 384   & 0     & 645120    & 0          & 1513881600     & 0               & 4132896768000        & 0 \\
    3   & 1     & 0     & 576   & 0     & 6350400   & 0          & 136604160000   & 0               & 3976941969000000     & 0 \\
    4   & 1     & 0     & 576   & 0     & 12640320  & 0          & 805929062400   & 0               & 80306439693480000    & 0 \\
    5   & 1     & 0     & 768   & 0     & 18144000  & 0          & 1915060224000  & 0               & 401643111149280000   & 0 \\
    6   & 1     & 0     & 960   & 0     & 27518400  & 0          & 3418888704000  & 0               & 1062973988196120000  & 0 \\
    7   & 1     & 0     & 1152  & 0     & 40763520  & 0          & 5953528627200  & 0               & 2211592605702480000  & 0 \\
    8   & 1     & 0     & 1344  & 0     & 56589120  & 0          & 9963493478400  & 0               & 4323671149117320000  & 0 \\
    9   & 1     & 0     & 1536  & 0     & 74995200  & 0          & 15571407667200 & 0               & 7994421145174464000  & 0 \\
    10  & 1     & 0     & 1728  & 0     & 95981760  & 0          & 22961207808000 & 0               & 13760135544283128000 & 0 \\
    \bottomrule
  \end{tabular}
  \caption{Period sequence for the even graph potential \\ (see also \cite[Table~2]{gp-tqft})}
  \label{table:quantum-periods-even}
\end{sidewaystable}
%\end{landscape}

\renewcommand*{\bibfont}{\small}
\nocite{Galkin-talk}
\printbibliography
\end{document}